\renewcommand{\Im}{\mathrm{im}}
\newcommand{\Irr}{\mathsf{Irr}}
\newcommand{\mr}{\mathsf{r}}
\newcommand{\msr}{\mathsf{sr}}
\newcommand{\even}{\mathsf{even}}
\newcommand{\odd}{\mathsf{odd}}
\newcommand{\ol}{\overline}
\newcommand \C[1]{{\mathcal #1}}
\newcommand \wti[1]{{\widetilde {#1}}}
\newcommand\fg{\mathfrak g}
\newcommand\frg{\mathfrak g}
\newcommand\frh{\mathfrak h}
\newcommand \bC{{\mathbb C}}
\newcommand \bH{{\mathbb H}}
\newcommand \bR{{\mathbb R}}
\newcommand\CB{{\C B}}
\newcommand\caD{{\C D}}
\newcommand\CN{{\C N}}
\newcommand\caN{{\CN}}
\newcommand\CR{{\C R}}
\newcommand\caR{{\CR}}
\newcommand\ep{{\epsilon}}
\newcommand\om{{\omega}}
\newcommand\al{{\alpha}}
\newcommand\fh{{\mathfrak h}}
\newtheorem{theorem}{Theorem}[section]
\newtheorem{corollary}[theorem]{Corollary}
\newtheorem{lemma}[theorem]{Lemma}
\newtheorem{proposition}[theorem]{Proposition}
\theoremstyle{definition}
\newtheorem{definition}[theorem]{Definition}
\newtheorem{remark}[theorem]{Remark}
\numberwithin{equation}{section}
\newcommand\Hom{\operatorname{Hom}}
\newcommand\tr{\operatorname{tr}}
\newcommand\triv{\mathsf{triv}}
\newcommand\sgn{\mathsf{sgn}}
\newcommand\refl{\mathsf{refl}}
\newcommand\sol{\mathsf{sol}}
\newcommand\gen{\mathsf{gen}}
\newcommand\Pin{\mathsf{Pin}}
\newcommand{\bs}{\backslash}
\begin{document}


\title[Dirac cohomology for graded affine Hecke algebras]
{Dirac cohomology  for graded affine Hecke algebras}

\author{Dan Barbasch}
       \address[D. Barbasch]{Dept. of Mathematics\\
               Cornell University\\Ithaca, NY 14850}
       \email{barbasch@math.cornell.edu}

\author{Dan Ciubotaru}
        \address[D. Ciubotaru]{Dept. of Mathematics\\ University of
          Utah\\ Salt Lake City, UT 84112}
        \email{ciubo@math.utah.edu}

\author{Peter E.~Trapa}
        \address[P. Trapa]{Dept. of Mathematics\\ University of
          Utah\\ Salt Lake City, UT 84112}
        \email{ptrapa@math.utah.edu}

\begin{abstract}
We define analogues of the Casimir and Dirac operators for graded
affine Hecke algebras, and establish a version of Parthasarathy's
Dirac operator inequality.  We then prove a version of
Vogan's Conjecture for Dirac cohomology.  The formulation of the
conjecture depends on a uniform geometric parametrization of spin
representations of Weyl groups.  Finally, we apply
    our results to the study of 
unitary representations.
\end{abstract}

\thanks{The authors were partially supported by NSF grants
DMS-0554278 and DMS-0554118.}
\maketitle

\setcounter{tocdepth}{1}

\section{Introduction}\label{sec:0}
\label{s:intro}
This paper develops the theory of Dirac cohomology
for modules over a graded affine Hecke algebra.
The cohomology of such a module $X$
is a representation
of a spin double cover $\wti W$ of a relevant Weyl group.
Our main result shows that when $X$ is irreducible, the
$\wti W$ representation (when nonzero) 
determines
the central character of $X$.  This can be interpreted as
a $p$-adic analogue of Vogan's Conjecture (proved by 
Huang and Pand\v zi\' c \cite{HP}) for Harish-Chandra modules.

In more detail, fix a root system $R$ (not necessarily 
crystallographic), let $V$ denote its complex
span, write $V^\vee$ for the complex span of the coroots, $W$
for the Weyl group,
and fix a $W$-invariant inner product $\langle~,~\rangle$ on $V^\vee$.
Let $\bH$ denote the associated graded affine Hecke algebra with parameters
defined by Lusztig \cite{L} (Definition \ref{d:graded}).   As a complex
vector space, $\bH \simeq \bC[W] \otimes S(V^\vee)$.  Lusztig proved
that maximal ideals in the center of $\bH$, and hence central characters
of irreducible $\bH$ modules, are parametrized by ($W$ orbits of) elements
of $V$.  
In particular it makes sense to speak of the length of the central character
of an irreducible $\bH$ module.  

After introducing certain Casimir-type elements in Section \ref{s:cas},
we then turn to the Dirac operator in Section \ref{sec:dirac}.   
Let $C(V^\vee)$ denote the corresponding Clifford algebra for the
inner product $\langle~,~\rangle$.  For a fixed
orthonormal basis $\{\omega_i\}$ of $V^\vee$, the Dirac operator
is defined (Definition \ref{d:dirac}) as
\[
\caD = \sum_i \wti \omega_i \otimes \omega_i \in \bH \otimes C(V^\vee)
\]
where $\wti \omega_i \in \bH$ is given by \eqref{omtilde}.
In Theorem \ref{d:dirac}, we prove $\caD$ is roughly the square root of the 
Casimir element $\sum_i\omega_i^2 \in \bH$ (Definition \ref{casimir}).

For a fixed space of spinors $S$ for $C(V^\vee)$ and a fixed $\bH$
module $X$, $\caD$ acts as an operator $D$ on $X \otimes S$.  Since
$W$ acts by orthogonal transformation on $V^\vee$, we can consider its
preimage $\wti W$ in $\Pin(V^\vee)$.  By restriction $X$ is a
representation of $W$, and so $X \otimes S$ is a representation of
$\wti W$.  Lemma \ref{l:winvdirac} shows that $\caD$ (and hence $D$)
are approximately $\wti W$ invariant.  Thus $\ker(D)$ is also a
representation of $\wti W$.  Corollary \ref{c:diracineq} shows that if
$X$ is irreducible, unitary, and $\ker(D)$ is nonzero, then any
irreducible representation of $\wti W$ occurring in $\ker(D)$
determines the length of the central character of $X$.  This is an
analogue of Parthasarathy's Dirac operator inequality \cite{Pa}
(cf.~\cite[Section 7]{SV}) for Harish-Chandra modules.

We then define the Dirac cohomology of $X$ as $H^D(X)= \ker(D)/(\ker(D) \cap \Im(D))$
in Definition \ref{d:dcoh}.  (For unitary representations $H^D(X) = \ker(D)$.)
Once again $H^D(X)$ is a representation of $\wti W$. 
At least when $\bH$ is a Hecke algebra related to $p$-adic group 
representations, one is naturally led to the 
following version of Vogan's Conjecture:
if $X$ is irreducible and $H^D(X)$ is nonzero,
then any irreducible
representation of $\wti W$ occurring in $H^D(X)$
determines the central character of $X$, not just its length.  
Our main result, Theorem \ref{t:vogan}, establishes
this for algebras $\bH$ attached to crystallographic root
systems and equal parameters.  The proof is completed
in Section \ref{s:proof}.  As explained in Remark \ref{r:geom},
the proof also applies to establish 
Theorem \ref{t:vogan} for the special kinds of unequal parameters for
which
Lusztig's geometric theory applies \cite{lu:1}-\cite{lu:3}.

To make Theorem \ref{t:vogan} precise, we
need a way of passing from an irreducible $\wti W$ representation to
a central character, i.e.~an element of $V$.  
This is a fascinating problem in its own
right.  The irreducible representations of $\wti W$ --- the so-called
spin representations of $W$ --- have been known for a long time from
the work of Schur, Morris, Reade, and others.  But only recently has a
uniform parametrization of them in terms of nilpotent
orbits emerged \cite{ciubo:weyl}.  This parametrization (partly recalled 
in Theorem \ref{t:class}) provides exactly
what is needed for the statement of Theorem \ref{t:vogan}.

One of the main reasons for introducing the Dirac operator (as in the
real case) is to study unitary representations.  We give applications
in Section \ref{s:unit}.  Corollary \ref{c:unit} and Remark
\ref{r:unit} in particular contain powerful general statements about
unitary representations.  
Given the machinery of the Dirac operator, their proofs are remarkably
simple.

We remark that while this paper is inspired by the ideas of 
Parthasarathy, Vogan, and Huang-Pand\v{z}i\'c, it is
essentially self-contained.    There are two exceptions.
We have already mentioned that we use the main
results of \cite{ciubo:weyl}.  The other
nontrivial result we need is the classification (and $W$-module
structure) of certain tempered $\bH$-modules (\cite{KL,L,lu:3}).
These results (in the form we use them)
are not available at arbitrary parameters.  This explains
the crystallographic
condition and restrictions
on parameters in the statement of Theorem \ref{t:vogan} and in
Remark \ref{r:geom}.  For applications to unitary representations
of $p$-adic groups, these hypotheses are natural.
Nonetheless we expect a version of Theorem \ref{t:vogan} to hold
 for arbitrary parameters and noncrystallographic roots systems.

The results of this paper suggest generalizations to other types of related
Hecke algebras.
They also suggest
possible generalizations
along the lines of \cite{ko} for a version of Kostant's cubic Dirac operator.
Finally, in \cite{EFM} and
\cite{ct2} (and also in
unpublished work of Hiroshi Oda)
functors between Harish-Chandra modules and 
modules for associated graded affine
Hecke algebras are introduced.  It would be interesting to
understand how these functors relate Dirac cohomology in
the two categories.

\section{Casimir operators}\label{sec:1}
\label{s:cas}

\subsection{Root systems} 
\label{ss:rs}
Fix 
a root system $\Phi=(V_0,R,V_0^\vee, R^\vee)$ over the real numbers. 
In particular: $R \subset V_0 \setminus \{0\}$ spans 
the real vector space $V_0$; 
$R^\vee \subset V^\vee \setminus \{0\}$ spans the real vector space
$V_0^\vee$; there is a perfect bilinear 
pairing 
\[
(\cdot,\cdot): V_0\times V_0^\vee\to \bR;
\] 
and there is a bijection between $R$ and $R^\vee$ denoted
$\alpha \mapsto \alpha^\vee$ such that
$(\alpha, \alpha^\vee) = 2$ for all $\alpha$.
Moreover, for $\alpha \in R$, the reflections
\begin{align*}
s_\al&: V_0\to V_0, \; \; \; s_\al(v)=v-(v,\al^\vee)\al, \\
 s_\al^\vee&:V_0^\vee\to V_0^\vee,  \; s^\vee_\al(v')=v'-(\al,v')\al^\vee
\end{align*}
leave $R$ and $R^\vee$ invariant, respectively. 
Let $W$ be the subgroup of $GL(V_0)$ 
generated by $\{s_\al \; | \; \al\in R\}$.
The map $s_\al \mapsto s^\vee_\al$ given an
embedding of $W$ into
$GL(V_0^\vee)$ so that
\begin{equation}
\label{e:w}
(v,wv') = (wv,v')
\end{equation}
for all $v \in V_0$ and $v' \in V_0^\vee$.

We will assume that the root system $\Phi$ is reduced, meaning that
$\al\in R$ implies $2\al\notin R.$ However, initially
we do not need to assume
that $\Phi$ is crystallographic, meaning that for us
$(\al,\beta^\vee)$ need not always be an integer.  We will fix a
choice of positive roots $R^+ \subset R$, let $\Pi$ denote the
corresponding simple roots in $R^+$, and let $R^{\vee,+}$ denote the
corresponding positive coroots in $R^\vee$.  Often we will write
$\al>0$ or $\al<0$ in place of $\al\in R^+$ or $\al\in (-R^+)$,
respectively.

\smallskip

We fix, as we may, a $W$-invariant inner product $\langle \cdot, \cdot \rangle$ on
$V_0^\vee$. The constructions in this paper of the Casimir and Dirac
operators depend, up to a positive scalar, on the choice of this inner
product.
Using the bilinear pairing $(\cdot, \cdot)$, we
define a dual inner product on $V_0$ as follows. Let $\{\om_i \; | \; 
i=1,\cdots,n\}$ and $\{\om^i \; | \;  i=1, \dots, n\}$ be $\bR$-bases of $V_0^\vee$ 
which are in duality; i.e.~such 
that $\langle \om_i, \om^j \rangle = \delta_{i,j}$, the Kronecker delta.
Then for $v_1, v_2 \in V_0$,  set
\begin{equation}\label{innprod}
\langle v_1, v_2 \rangle=\sum_{i=1}^n(v_1,\om_i)(v_2,\om^i).
\end{equation}
(Since the inner product on $V_0^\vee$ is also denoted $\langle \cdot , \cdot \rangle$,
this is an abuse of notation.  But it causes no confusion in practice.)
Then \eqref{innprod} defines
an inner product on $V_0$ which once again is $W$-invariant.
It does not depend on the choice of bases $\{\om_i\}$ and
$\{\om^i\}$. If $v$ is a vector in $V$ or in $V^\vee$, we set
$|v|:=\langle v,v\rangle^{1/2}.$

\subsection{The graded affine Hecke algebra}

Fix a root system $\Phi$ as in the previous section.  Set
$V=V_0\otimes_\bR \bC$, and $V^\vee=V_0^\vee\otimes_\bR \bC.$
Fix  a $W$-invariant ``parameter function'' $c:R\to \mathbb R$,
and set $c_\alpha = c(\alpha)$.

\begin{definition}[\cite{L} \S4]\label{d:graded} 
The graded affine Hecke algebra $\bH=\bH(\Phi,c)$ attached to
  the root system $\Phi$ and with parameter function $c$ is the complex associative
  algebra with unit generated by the symbols $\{t_w\; | \; w\in W\}$ 
and $\{t_f \; | \; f \in S(V^\vee)$\}, subject to the
relations:
\begin{enumerate}
\item[(1)]
The linear map from the group algebra $\bC[W] = \bigoplus_{w \in W} \bC w$ 
to $\bH$ taking $w$ to $t_w$ is an injective map of algebras.

\item[(2)]
The linear map from the symmetric algebra $S(V^\vee)$ to $\bH$ taking
an element $f$ to $t_f$ is an injective map of algebras.
\end{enumerate}
We will often implicitly invoke these inclusions and view $\bC[W]$
and $S(V^\vee)$ as subalgebras of $\bH$.
As is customary,
we also write $f$ instead of $t_f$ in $\bH$.
The final relation is
\begin{enumerate}
\item[(3)]
\begin{equation}\label{hecke}
\omega t_{s_\alpha}-t_{s_\alpha} s_\alpha(\omega)= c_\alpha
(\alpha,\omega),\quad \alpha\in \Pi,~ \omega\in V^\vee;
\end{equation}
here $s_\alpha(\omega)$ is the element of $V^\vee$ obtained by $s_\alpha$ acting on $\omega$.
\end{enumerate}

\end{definition}

Proposition 4.5 in \cite{L} says that the center $Z(\bH)$ of $\bH$ is $S(V^\vee)^W$.
Therefore maximal ideals in $Z(\bH)$ are parametrized by $W$ orbits in $V$.

\begin{definition}
\label{d:cc}
For $\nu \in V$, we write $\chi_\nu$ for the homomorphism from $Z(\bH)$ to $\bC$
whose kernel is the maximal ideal parametrized by the $W$ orbit of $\nu$.
By a version of
Schur's
lemma, $Z(\bH)$ acts in any irreducible $\bH$ module $X$ by a scalar $\chi: Z(\bH)\to
\bC$. We call $\chi$ the central character of
$(\pi,X)$.  In particular, there exists
$\nu \in V$ such that $\chi = \chi_\nu$.
\end{definition}

\subsection{The Casimir element of $\bH$}

\begin{definition}\label{d:casimir} Let $\{\om_i:i=1,n\}$ and $\{\om^i: i=1,n\}$ be dual bases of $V^\vee_0$ with respect to $\langle~,~\rangle$. 
Define 
\begin{equation}
\label{casimir}
\Omega=\sum_{i=1}^n\omega_i\omega^i\in \bH.
\end{equation}
It follows from a simple calculation that $\Omega$ is well-defined
independent of the choice of bases.
\end{definition}

\begin{lemma}
\label{l:cascentral}
The element $\Omega$ is central in $\bH$.
\end{lemma}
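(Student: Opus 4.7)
The plan is to verify centrality on a generating set. Since $\Omega$ lies in the commutative subalgebra $S(V^\vee) \subset \bH$, it commutes with all of $S(V^\vee)$ automatically. The algebra $\bH$ is generated by $S(V^\vee)$ together with $\{t_{s_\alpha} : \alpha \in \Pi\}$, so it suffices to verify $\Omega\, t_{s_\alpha} = t_{s_\alpha}\,\Omega$ for each simple reflection $s_\alpha$.

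To do this I would apply the commutation relation \eqref{hecke} twice to push $t_{s_\alpha}$ to the left through $\omega_i \omega^i$. A direct computation yields
\begin{equation*}
\omega_i \omega^i \, t_{s_\alpha} = t_{s_\alpha}\, s_\alpha(\omega_i)\, s_\alpha(\omega^i) + c_\alpha\bigl[(\alpha,\omega_i)\, s_\alpha(\omega^i) + (\alpha,\omega^i)\, \omega_i\bigr].
\end{equation*}
Summing over $i$, the leading term becomes $t_{s_\alpha} \sum_i s_\alpha(\omega_i)\, s_\alpha(\omega^i)$. Because $s_\alpha$ is an isometry of $\langle\cdot,\cdot\rangle$, the collections $\{s_\alpha(\omega_i)\}$ and $\{s_\alpha(\omega^i)\}$ form another pair of dual bases of $V^\vee_0$; invoking the basis-independence of $\Omega$ noted in Definition \ref{d:casimir}, this leading term is exactly $t_{s_\alpha}\, \Omega$. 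The remaining task is therefore to show the ``correction'' sum
\begin{equation*}
\sum_i (\alpha,\omega_i)\, s_\alpha(\omega^i) + \sum_i (\alpha,\omega^i)\, \omega_i
\end{equation*}
vanishes.

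The key (and only nontrivial) step is to rewrite this correction intrinsically. Let $\tilde\alpha \in V^\vee$ be the unique vector satisfying $(\alpha,v) = \langle \tilde\alpha, v\rangle$ for all $v \in V^\vee$. Using $\langle \omega_i, \omega^j\rangle = \delta_{ij}$, I would check directly that both $\sum_i (\alpha,\omega^i)\,\omega_i$ and $\sum_i (\alpha,\omega_i)\,\omega^i$ coincide with $\tilde\alpha$, so by linearity of $s_\alpha$ the correction equals $s_\alpha(\tilde\alpha) + \tilde\alpha$. Now $\tilde\alpha$ is $\langle\cdot,\cdot\rangle$-orthogonal to the reflection hyperplane $\{v \in V^\vee : (\alpha,v) = 0\}$; since the inner product is $W$-invariant and this hyperplane is preserved by $s_\alpha$, the orthogonal complement must be $s_\alpha$-stable one-dimensional, hence equal to the $(-1)$-eigenspace $\bC\alpha^\vee$ of $s_\alpha$. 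Therefore $s_\alpha(\tilde\alpha) = -\tilde\alpha$, the correction vanishes, and $\Omega t_{s_\alpha} = t_{s_\alpha}\Omega$ as required.

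The main obstacle is the final geometric identification: recognizing that the correction term is controlled by a single vector $\tilde\alpha$ dual to $\alpha$, and that $W$-invariance of $\langle\cdot,\cdot\rangle$ is precisely what forces $\tilde\alpha$ to be an eigenvector of $s_\alpha$ with eigenvalue $-1$. Everything else is a mechanical double application of relation \eqref{hecke} together with the basis-independence of $\Omega$.
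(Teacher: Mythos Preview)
Your proof is correct and follows essentially the same approach as the paper: reduce to commutation with $t_{s_\alpha}$ for simple $\alpha$, apply relation \eqref{hecke} twice, use basis-independence of $\Omega$ for the leading term, and show the correction terms cancel. The only cosmetic difference is that the paper handles the cancellation by passing to a self-dual basis (so the two correction sums visibly coincide up to sign), whereas you identify both sums with the dual vector $\tilde\alpha$ and invoke $s_\alpha(\tilde\alpha)=-\tilde\alpha$; these are equivalent observations.
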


\begin{proof} 
To see that $\Omega$ is central, in light of Definition
\ref{d:graded}, it is sufficient to check that
$t_{s_\alpha}\Omega=\Omega t_{s_\alpha}$ for every $\alpha\in\Pi.$
Using (\ref{hecke}) twice and the fact that $(\alpha, s_\alpha(\omega)) = -(\alpha, \omega)$
(as follows from \eqref{e:w}), we find
\begin{equation}
t_{s_\alpha}(\omega_i\omega^i)=(s_\alpha(\omega_i)s_\al(\om^i)) t_{s_\alpha}+c_\al(\al,\omega^i)s_\al(\omega_i)+c_\al(\al,\omega_i)\omega^i.
\end{equation}
Therefore, we have
\begin{equation}
\begin{aligned}
t_{s_\alpha}\Omega&=\sum_{i=1}^ns_\alpha(\omega_i) s_\al(\omega^i)
t_{s_\alpha}+c_\al\sum_{i=1}^n
(\al,\omega^i)s_\al(\omega_i)+c_\al\sum_{i=1}^n(\al,\omega_i) \omega^i\\
&=\Omega
t_{s_\alpha}+c_\al\sum_{i=1}^n
(\al,s_\alpha(\omega^i))\omega_i+c_\al\sum_{i=1}^n(\al,\omega_i) \omega^i,  \\
&=\Omega t_{s_\alpha}-c_\al\sum_{i=1}^n
(\al,\omega^i)\omega_i
+c_\al\sum_{i=1}^n(\al,\omega_i) \omega^i.
\end{aligned}
\end{equation}
But the last two terms cancel (which can be seen by taking $\{\omega_i\}$ to be a
self-dual basis, for example).  So indeed
$t_{s_\alpha}\Omega
=\Omega t_{s_\alpha}$.
\end{proof}

\begin{lemma}\label{scalar} Let $(\pi,X)$ is an irreducible
  $\bH$-module with central character $\chi_\nu$ for $\nu \in V$
  (as in Definition \ref{d:cc}).
Then
$$\pi(\Omega)=\langle \nu,\nu\rangle \;  \mathrm{Id}_X.$$
\end{lemma}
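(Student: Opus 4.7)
The plan is to identify $\Omega$ with an element of the subalgebra $S(V^\vee)^W \subset \bH$, invoke Schur's lemma, and then evaluate the resulting polynomial at $\nu$ using the pairing $(\cdot,\cdot)$.

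First, observe that each $\omega_i$ and $\omega^i$ lies in $V^\vee \subset S(V^\vee)$, and by Definition \ref{d:graded}(2) the inclusion $S(V^\vee) \hookrightarrow \bH$ is an algebra map. Hence the product $\omega_i \omega^i$ taken in $\bH$ coincides with the product taken in the commutative algebra $S(V^\vee)$, and $\Omega \in S(V^\vee)$. Combined with Lemma \ref{l:cascentral} and Lusztig's identification $Z(\bH) = S(V^\vee)^W$, we conclude $\Omega \in S(V^\vee)^W$. (Alternatively, $W$-invariance of $\Omega$ in $S(V^\vee)$ can be checked directly: for $w \in W$, the bases $\{w\omega_i\}$ and $\{w\omega^i\}$ remain dual with respect to $\langle\cdot,\cdot\rangle$ by $W$-invariance of the inner product, so basis-independence of $\Omega$ yields $w\cdot \Omega = \Omega$.)

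Next, since $X$ is irreducible with central character $\chi_\nu$, Schur's lemma (as in Definition \ref{d:cc}) gives
\[
\pi(\Omega) = \chi_\nu(\Omega) \cdot \mathrm{Id}_X.
\]
It remains to compute $\chi_\nu(\Omega)$. Under the standard identification of $S(V^\vee)$ with polynomial functions on $V$ via the pairing $(\cdot,\cdot)$, each $\omega \in V^\vee$ acts as the linear functional $\nu \mapsto (\nu,\omega)$, and $\chi_\nu$ is the evaluation-at-$\nu$ map on $S(V^\vee)^W$. Therefore
\[
\chi_\nu(\Omega) = \sum_{i=1}^n (\nu,\omega_i)(\nu,\omega^i),
\]
which by the very definition \eqref{innprod} of the dual inner product on $V_0$ (extended bilinearly to $V$) equals $\langle \nu,\nu\rangle$. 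This yields the claim.

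There is no real obstacle: the argument is essentially bookkeeping, but one should be careful that the evaluation pairing used to define $\chi_\nu$ is the one coming from $(\cdot,\cdot)$ (not from $\langle\cdot,\cdot\rangle$), so that the computation of $\chi_\nu(\Omega)$ matches the right-hand side of the definition of $\langle\cdot,\cdot\rangle$ on $V_0$ in \eqref{innprod}. The formulas are in fact engineered so that this matching is immediate.
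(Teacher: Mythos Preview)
Your proof is correct and essentially follows the same line as the paper's: both invoke centrality of $\Omega$ and Schur's lemma, then compute the scalar via \eqref{innprod}. The only packaging difference is that the paper picks a weight vector $x$ of weight $w\nu$ for $S(V^\vee)$ and computes $\pi(\Omega)x=\langle w\nu,w\nu\rangle x=\langle\nu,\nu\rangle x$ (so it needs $W$-invariance of $\langle\cdot,\cdot\rangle$ at the end), whereas you evaluate $\chi_\nu$ directly at $\nu$ after observing $\Omega\in S(V^\vee)^W$, which sidesteps that last step.
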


\begin{proof}
Since $\Omega$ is central (by Lemma \ref{l:cascentral}), it acts
by a multiple of the identity on $X$.
We use the weight decomposition of $(\pi,X)$ with
respect to the abelian subalgebra $S(V^\vee).$ Let $x\neq 0$ be
an eigenvector for the weight $w\nu\in V$, $w\in W$. Then we have:
\begin{equation*}
\pi(\om_i\om^i)x= (w \nu,\om_i)(w\nu,\om^i) x,
\end{equation*}
and when we sum over the dual bases $\{\om_i\},\{\om^i\}$, we find
\begin{equation}
\pi(\Omega)x=\sum_{i=1}^n (w \nu,\om_i)(w\nu,\om^i) x=\langle
w\nu,w\nu\rangle x=\langle\nu,\nu\rangle x,
\end{equation} 
by (\ref{innprod}) and the $W$-invariance of $\langle~,~\rangle.$
\end{proof}

\subsection{} We will need the following formula. To simplify 
notation, we define
\begin{equation}
t_{w\beta}:=t_wt_{s_\beta}t_{w^{-1}},\text{ for $w\in W,$
$\beta\in R$}.
\end{equation}

\begin{lemma}  \label{l:formula}
For $w \in W$ and $\omega \in V^\vee$, 
\begin{equation}
  \label{eq:omt}
  t_w\om t_w^{-1}=w(\om)
  +\sum_{\beta>0\text{ s.t. }w\beta<0}c_\beta(\beta,\om) t_{w\beta}.
\end{equation}
\end{lemma}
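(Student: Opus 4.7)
My plan is to prove the lemma by induction on the length $\ell(w)$, taking the base case $w = s_\alpha$ for $\alpha \in \Pi$. The base case is immediate from relation (3) of Definition \ref{d:graded}: replacing $\omega$ by $s_\alpha(\omega)$ in \eqref{hecke} and using $(\alpha, s_\alpha(\omega)) = -(\alpha, \omega)$, one rearranges to obtain
\[
t_{s_\alpha}\omega t_{s_\alpha}^{-1} = s_\alpha(\omega) + c_\alpha (\alpha, \omega)\, t_{s_\alpha}.
\]
Since the unique positive root sent to a negative root by $s_\alpha$ is $\alpha$ itself, and since $t_{s_\alpha \cdot \alpha} = t_{s_\alpha} t_{s_\alpha} t_{s_\alpha}^{-1} = t_{s_\alpha}$, this matches the claimed formula.

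For the inductive step, I would write $w = s_\alpha w'$ with $\alpha \in \Pi$ and $\ell(w) = \ell(w') + 1$, and compute
\[
t_w \omega\, t_w^{-1} \;=\; t_{s_\alpha}\bigl(t_{w'}\omega\, t_{w'}^{-1}\bigr) t_{s_\alpha}^{-1}.
\]
The inductive hypothesis rewrites the inner conjugate as $w'(\omega) + \sum_{\gamma > 0,\, w'\gamma < 0} c_\gamma (\gamma, \omega)\, t_{w'\gamma}$. Conjugating the leading term $w'(\omega)$ by $t_{s_\alpha}$ using the base case produces $w(\omega) + c_\alpha(\alpha, w'(\omega))\, t_{s_\alpha}$, while conjugating each $t_{w'\gamma} = t_{s_{w'\gamma}}$ by $t_{s_\alpha}$ yields $t_{s_{s_\alpha w'\gamma}} = t_{w\gamma}$, turning the sum into $\sum_{\gamma > 0,\, w'\gamma < 0} c_\gamma(\gamma, \omega)\, t_{w\gamma}$.

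It then remains to absorb the stray term $c_\alpha(\alpha, w'(\omega))\, t_{s_\alpha}$ into the sum as the contribution of the single ``new'' inversion. Set $\beta_0 := w'^{-1}\alpha$. The standard Bruhat-theoretic fact that $\ell(s_\alpha w') = \ell(w') + 1$ implies $\beta_0 > 0$, and moreover that
\[
\{\beta > 0 : w\beta < 0\} \;=\; \{\beta_0\} \sqcup \{\gamma > 0 : w'\gamma < 0\}.
\]
Using the pairing identity $(\alpha, w'(\omega)) = (\beta_0, \omega)$, the $W$-invariance of the parameter function (so $c_\alpha = c_{\beta_0}$), and the computation $t_{w\beta_0} = t_{s_{w\beta_0}} = t_{s_{-\alpha}} = t_{s_\alpha}$, the stray term becomes exactly $c_{\beta_0}(\beta_0, \omega)\, t_{w\beta_0}$, which is the summand indexed by $\beta_0$ in the target formula. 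Combining the two pieces yields the claimed expression.

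The main obstacle is purely combinatorial bookkeeping: verifying how inversion sets behave under left multiplication by a simple reflection and tracking signs in the pairing between $V$ and $V^\vee$ under the $W$-action. Once those standard Weyl-group facts are in hand, every other step is a routine consequence of the defining relation \eqref{hecke} together with the fact that $\bC[W] \hookrightarrow \bH$ is an algebra embedding.
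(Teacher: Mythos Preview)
Your proof is correct and follows essentially the same approach as the paper: induction on $\ell(w)$, with the base case read off from the defining relation \eqref{hecke}, and the inductive step obtained by conjugating the inductive hypothesis by $t_{s_\alpha}$ for a simple $\alpha$. You spell out the inversion-set combinatorics (identifying the new inversion $\beta_0 = w'^{-1}\alpha$ and checking $c_\alpha(\alpha,w'(\omega))\,t_{s_\alpha} = c_{\beta_0}(\beta_0,\omega)\,t_{w\beta_0}$) more explicitly than the paper does, but the argument is the same.
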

\begin{proof}
The formula holds if $w=s_\al$, for $\al\in\Pi$, by \eqref{hecke}: 
\begin{equation}
  \label{eq:fsimple}
  t_{s_\al}\om t_{s_\al}=s_\al(\om)+c_\al(\al,\om)t_{s_\al}.
\end{equation}
We now do an induction on the length of $w.$ Suppose the formula holds
for $w,$ and let $\al$ be a simple root such that ${s_\al}w$ has strictly greater
length. Then
\begin{equation}
  \label{eq:finduction}
  \begin{aligned}
t_{s_\al}t_w\om t_{w^{-1}}t_{s_\al}&=
t_{s_\al}\left[ w(\om) +\sum_{\beta\, : \, w\beta<0}c_\beta(\beta,\om)
t_{w\beta}\right] t_{s_\al}=\\
&=s_\al w(\om)+ c_\al (\al,\om) t_{s_\al}+
\sum_{\beta \, : \, w\beta<0}c_\beta(\beta,\om) t_{s_\al}t_{w\beta}t_{s_\al}\\
&=s_\al w(\om)+c_\al(\al,\om) t_{s_\al}+
\sum_{\beta \, : \, w\beta<0}c_\beta(\beta,\om) t_{s_\al w\beta}.
\end{aligned}
\end{equation}
The claim follows.
\end{proof}

\subsection{The $*$-operation, Hermitian and unitary representations} 
The algebra $\bH$ has a natural conjugate linear
anti-involution defined on generators as follows (\cite[Section 5]{BM}):
\begin{equation} \label{eq:tomdef}
\begin{aligned}
&t_w^*=t_{w^{-1}},\quad w\in W,\\
&\omega^*=-\omega+\sum_{\beta>0}c_\beta (\beta,\omega) t_{s_\beta},\quad
\omega\in V^\vee.
\end{aligned}
\end{equation}
In general there are other conjugate linear anti-involutions on
$\bH$, but this one is distinguished by its relation to the 
canonical notion of unitarity for $p$-adic group representations
\cite{BM1}-\cite{BM}.

An $\bH$-module $(\pi,X)$ 
is said to be $*$-Hermitian (or just Hermitian) 
if there exists a Hermitian form $(~,~)_X$ on $X$ which is
invariant in the sense that:
\begin{equation}\label{hermitian}
(\pi(h)x,y)_X=(x,\pi(h^*)y)_X,\quad\text{for all }h\in\bH,~ x,y\in X.
\end{equation}
If such a form exists which is also positive definite, then $X$ is
said to be $*$-unitary (or just unitary).

Because the second formula in \eqref{eq:tomdef} is complicated, we
need other elements which behave more simply under $*$.
For every $\omega\in V^\vee$, define
\begin{equation}\label{omtilde}
\wti\om=\om-\frac 12 \sum_{\beta>0}c_\beta (\beta,\omega) t_{s_\beta}
\; \in \; \bH.
\end{equation}
Then it follows directly from the definitions that $\omega^* = - \omega$.
Thus if $(\pi,X)$ is Hermitian $\bH$-module
\begin{equation}
(\pi(\wti\om)x,\pi(\wti\om)x)_X=(\pi({\wti\om}^*)\pi(\wti\om)x,x)_X=-(\pi(\wti\om^2)x,x)_X.
\end{equation}
If we further assume that $X$ is unitary, then
\begin{equation}
\label{eq:cas0}
(\pi(\wti\om^2)x,x)_X\le 0, \quad\text{for all }x\in X,~
  \om\in V^\vee_0.
\end{equation}
For each $\omega$ and $x$, this is a necessary condition for a Hermitian
representation $X$ to be unitary.
It is difficult to apply
because the operators $\pi(\wti \omega^2)$ are intractable in general.
Instead we introduce a variation on the Casimir element of Definition 
\ref{d:casimir} whose action in an $\bH$-module will be seen to be tractable.

\begin{definition}\label{d:castilde} Let $\{\om_i\},\{\om^i\}$ be dual bases of $V^\vee_0$ with respect to $\langle~,~\rangle$. Define 
\begin{equation}
\wti\Omega=\sum_{i=1}^n \wti\omega_i\wti{\om}^i \; \in \; \bH.
\end{equation}
It will follow
from Theorem \ref{t:Omtilde} below that $\wti\Omega$ is independent
of the bases chosen.
\end{definition}

If we sum \eqref{eq:cas0} over a self-dual orthonormal basis
of $V^\vee$, we immediately obtain the following necessary condition for
unitarity.

\begin{proposition}\label{p:criterion} 
A Hermitian $\bH$-module $(\pi,X)$ 
with invariant form $(~,~)_X$ is unitary only if
\begin{equation}\label{ineq1}
(\pi(\wti\Omega)x,x)_X\le 0,\quad \text{for all }x\in X.
\end{equation}
\end{proposition}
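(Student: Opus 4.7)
The plan is to reduce the statement directly to the already-established scalar inequality \eqref{eq:cas0}, which says $(\pi(\wti\om^2) x, x)_X \le 0$ for every $\om \in V_0^\vee$. Since both sides of \eqref{ineq1} are additive in the element used in place of $\wti\Omega$, it suffices to express $\wti\Omega$ as a sum of squared elements of the form $\wti\om^2$ with $\om \in V_0^\vee$.

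To that end, I would first choose a self-dual orthonormal basis $\{\om_i\}_{i=1}^n$ of $V_0^\vee$ with respect to $\langle\cdot,\cdot\rangle$, so that $\om^i = \om_i$ for every $i$. With this choice, Definition \ref{d:castilde} collapses to
\[
\wti\Omega \;=\; \sum_{i=1}^n \wti\om_i\,\wti\om^i \;=\; \sum_{i=1}^n \wti\om_i^{\,2}.
\]
Applying \eqref{eq:cas0} to each basis vector $\om_i$ gives $(\pi(\wti\om_i^{\,2}) x, x)_X \le 0$ for all $x \in X$; summing over $i$ yields $(\pi(\wti\Omega) x, x)_X \le 0$, which is exactly \eqref{ineq1}.

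There is essentially no obstacle at this stage, because the real content has already been extracted in verifying that $\wti\om^* = -\wti\om$ and then deducing \eqref{eq:cas0} from unitarity and the invariance of the Hermitian form $(~,~)_X$. The only minor point to bear in mind is that Definition \ref{d:castilde} presents $\wti\Omega$ as an intrinsic element of $\bH$, a fact promised by the forthcoming Theorem \ref{t:Omtilde}. Within this proof the basis-independence is not strictly necessary: one may simply take the displayed formula above as the working expression for $\wti\Omega$ in the chosen self-dual orthonormal basis, and then invoke Theorem \ref{t:Omtilde} to transport the inequality to $\wti\Omega$ computed in any other pair of dual bases.
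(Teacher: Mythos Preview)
Your proof is correct and is exactly the argument the paper gives: sum the inequality \eqref{eq:cas0} over a self-dual orthonormal basis of $V_0^\vee$, so that $\wti\Omega=\sum_i\wti\om_i^{\,2}$ and the desired inequality follows term by term. Your remark on basis-independence is also in line with the paper, which defers that point to Theorem~\ref{t:Omtilde}.
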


The remainder of this section will be aimed at computing the action
of $\wti \Omega$ in an irreducible $\bH$ module as explicitly as possible
(so that the necessary condition of \ref{p:criterion} becomes as
effective as possible).  Since $\wti \Omega$ is no longer
central, nothing as simple as Lemma \ref{scalar} is available.
But Proposition \ref{p:tilde}(2) below immediately implies that $\wti \Omega$ 
invariant under conjugation by $t_w$ for $w \in W$.  It therefore acts
on each $W$ isotypic component of $\bH$ module,  and on each isotypic component
it turns out to act in a relatively 
simple manner (Corollary \ref{c:criterion}).

To get started, set
\begin{equation}\label{eq:tom}
T_\om= \om - \wti \om = \frac
12\sum_{\beta>0}c_\beta(\beta,\om)t_{s_\beta}\in\bH
\end{equation}
and
\begin{equation}\label{eq:omegaW}
\Omega_W= \frac
14\sum_{\substack{\al>0,\beta>0\\s.t.~s_\al(\beta)<0}}c_\al c_\beta \langle\al,\beta\rangle t_{s_\al}t_{s_\beta} \in \bC[W].
\end{equation}
Note that $\Omega_W$ is invariant under the conjugation action of $W$.

\begin{lemma}\label{l:tom} If $\om_1,\om_2\in V^\vee$, we have
$$[T_{\om_1},T_{\om_2}]=\frac 14\sum_{\substack{\al>0,\beta>0 \\
s.t.~s_\al(\beta)<0}} c_\al c_\beta ((\al,\om_1)(\beta,\om_2)-(\beta,\om_1)(\al,\om_2)) t_{s_\al}t_{s_\beta}.$$
\end{lemma}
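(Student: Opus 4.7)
The plan is to expand the commutator and then use involutions on the index set to isolate the contribution from pairs with $s_\alpha(\beta) < 0$. First, since $\bC[W] \hookrightarrow \bH$ is an algebra map, direct expansion gives
\[
[T_{\omega_1}, T_{\omega_2}] = \frac{1}{4} \sum_{\alpha, \beta > 0} c_\alpha c_\beta\, h(\alpha,\beta)\, t_{s_\alpha s_\beta},
\]
where $h(\alpha,\beta) := (\alpha,\omega_1)(\beta,\omega_2) - (\alpha,\omega_2)(\beta,\omega_1)$ is bilinear and antisymmetric, and in particular $h(\alpha,\alpha) = 0$. Establishing the lemma is then equivalent to proving that the ``excess'' sum $X := \sum_{\alpha,\beta > 0,\, s_\alpha\beta > 0} c_\alpha c_\beta\, h(\alpha,\beta)\, t_{s_\alpha s_\beta}$ vanishes.

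The key point is the identity $h(\alpha, s_\alpha \beta) = h(\alpha, \beta)$, which follows at once from $s_\alpha \beta = \beta - (\beta, \alpha^\vee)\alpha$, bilinearity, and $h(\alpha,\alpha) = 0$. Combined with the $W$-invariance of $c$ and the identity $s_\alpha s_{s_\alpha \beta} = s_\beta s_\alpha$, this means the involution $(\alpha,\beta) \mapsto (\alpha, s_\alpha \beta)$ on $S^+ := \{(\alpha,\beta) : \alpha, \beta > 0,\, s_\alpha \beta > 0\}$ will send each summand $c_\alpha c_\beta\, h(\alpha,\beta)\, t_{s_\alpha s_\beta}$ to $c_\alpha c_\beta\, h(\alpha,\beta)\, t_{s_\beta s_\alpha}$; hence $2X = \sum_{S^+} c_\alpha c_\beta\, h(\alpha,\beta) (t_{s_\alpha s_\beta} + t_{s_\beta s_\alpha})$. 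A parallel calculation on $S^- := \{(\alpha,\beta): \alpha,\beta > 0,\, s_\alpha \beta < 0\}$, using the involution $(\alpha,\beta) \mapsto (\alpha, -s_\alpha\beta)$ which additionally flips the sign of $h$ by bilinearity, will give $\sum_{S^-} c_\alpha c_\beta\, h(\alpha,\beta)(t_{s_\alpha s_\beta} + t_{s_\beta s_\alpha}) = 0$.

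Adding these, $2X$ will equal the symmetrized sum over all $\alpha, \beta > 0$, which vanishes because swapping the dummy variables $\alpha \leftrightarrow \beta$ in the $t_{s_\beta s_\alpha}$-term produces the negative of the $t_{s_\alpha s_\beta}$-term (using $h(\beta,\alpha) = -h(\alpha,\beta)$). Hence $X = 0$, and the lemma follows. The main obstacle is keeping careful track of how the involutions act simultaneously on the scalar factor $h(\alpha,\beta)$, the parameter product $c_\alpha c_\beta$, and the Hecke-algebra element $t_{s_\alpha s_\beta}$; the asymmetry between $S^+$ (on which $h$ is preserved) and $S^-$ (on which $h$ flips sign) is precisely what allows the cancellation to go through.
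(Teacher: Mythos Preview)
Your argument is correct and rests on the same identity the paper uses, namely that under $\beta\mapsto s_\al\beta$ the scalar factor $h(\al,\beta)$ is unchanged while $t_{s_\al}t_{s_\beta}$ becomes $t_{s_\beta}t_{s_\al}$ (together with $c_{s_\al\beta}=c_\beta$). The only difference is in bookkeeping: the paper pairs $(\al,\beta)\in S^+$ directly with $(s_\al\beta,\al)$ and asserts cancellation, whereas you use the genuine involution $(\al,\beta)\mapsto(\al,s_\al\beta)$ on $S^+$, then symmetrize and invoke the companion involution on $S^-$ to kill the symmetrized sum; this makes the pairing manifestly well-defined at the cost of one extra step.
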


\begin{proof} From the definition (\ref{eq:tom}), we see that
$$[T_{\om_1},T_{\om_2}]=\frac 14\sum_{\al>0,\beta>0} c_\al c_\beta
  ((\al,\om_1)(\beta,\om_2)-(\beta,\om_1)(\al,\om_2))
  t_{s_\al}t_{s_\beta}.$$ Assume $\al>0,\beta>0$ are such that
  $s_\al(\beta)>0.$ Notice that if $\gamma = s_\alpha(\beta)$, then
  $t_{s_\gamma}t_{s_\al}=t_{s_\al}t_{s_\beta}$. Also, it is
  elementary to verify (by a rank 2 reduction to the span of $\alpha^\vee$
  and $\beta^\vee$, for instance) that
$$(s_\al(\beta),\om_1)(\al,\om_2)-(\al,\om_1)(s_\al(\beta),\om_2)=-((\al,\om_1)(\beta,\om_2)-(\beta,\om_1)(\al,\om_2)).$$
  Since $c$ is $W$-invariant, this implies that the contributions of
  the pairs of roots $\{\al,\beta\}$ and $\{s_\al(\beta),\al\}$ (when
  $s_\al(\beta)>0$) cancel out in the above sum. The claim follows.
\end{proof}

\begin{proposition}\label{p:tilde}
Fix $w \in W$ and $\omega, \omega_1,
\omega_2 \in V^\vee$.
The elements defined in (\ref{omtilde}) have
  the following properties:

\begin{enumerate}
\item $\wti\om^*=-\wti\om$;
\item $t_w\wti\om t_{w^{-1}}=\wti{w(\om)}$;
\item $[\wti\om_1,\wti\om_2]=-[T_{\om_1},T_{\om_2}]$.
\end{enumerate}
\end{proposition}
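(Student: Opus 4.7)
The key observation that streamlines all three parts is the reformulation
\[
\wti\om = \tfrac12(\om - \om^*), \qquad T_\om = \tfrac12(\om + \om^*),
\]
which follows directly from \eqref{eq:tomdef}: since each $s_\beta$ is an involution, $t_{s_\beta}^* = t_{s_\beta}$, and the second line of \eqref{eq:tomdef} rewrites as $\om^* = -\om + 2T_\om$. With this reformulation in hand, Part (1) is immediate: $\wti\om^* = \tfrac12(\om^* - \om) = -\wti\om$.

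For Part (2), my plan is to expand $t_w \wti\om\, t_{w^{-1}} = t_w \om\, t_{w^{-1}} - t_w T_\om\, t_{w^{-1}}$ and compare with $\wti{w(\om)} = w(\om) - T_{w(\om)}$. Lemma \ref{l:formula} handles the first summand directly. For the second, I reindex the sum $T_\om = \tfrac12\sum_{\beta>0}c_\beta(\beta,\om)t_{s_\beta}$ by $\gamma = w\beta$, using $t_w t_{s_\beta}t_{w^{-1}} = t_{s_{w\beta}}$, the $W$-invariance of the parameter function $c$, and the pairing identity $(\beta,\om) = (\gamma, w\om)$; then split the resulting sum over $wR^+$ into its positive and negative parts, using $s_{-\gamma}=s_\gamma$, $c_{-\gamma}=c_\gamma$, and the sign flip of the pairing under negation. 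This computation yields
\[
T_{w\om} - t_w T_\om\, t_{w^{-1}} = \sum_{\substack{\gamma>0\\ w^{-1}\gamma<0}} c_\gamma(\gamma,w\om)\, t_{s_\gamma},
\]
while the parallel substitution $\gamma = -w\beta$ in Lemma \ref{l:formula} gives
\[
t_w\om\, t_{w^{-1}} - w(\om) = -\sum_{\substack{\gamma>0\\ w^{-1}\gamma<0}} c_\gamma(\gamma,w\om)\, t_{s_\gamma}.
\]
Subtracting these two identities yields $t_w(\om - T_\om)t_{w^{-1}} = w(\om) - T_{w\om}$, which is Part (2).

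Part (3) is the slickest, once the reformulation is in hand. I rewrite both sides as commutators in $\om_i$ and $\om_i^*$:
\[
[\wti\om_1, \wti\om_2] = \tfrac14[\om_1 - \om_1^*,\, \om_2 - \om_2^*], \qquad
[T_{\om_1}, T_{\om_2}] = \tfrac14[\om_1 + \om_1^*,\, \om_2 + \om_2^*].
\]
Since $V^\vee \hookrightarrow \bH$ is commutative, $[\om_1,\om_2]=0$; since $*$ is an anti-involution, $[\om_1^*, \om_2^*] = -([\om_1,\om_2])^* = 0$. Expanding both brackets, the $[\om_1,\om_2]$ and $[\om_1^*,\om_2^*]$ contributions vanish, while the two cross terms $[\om_1,\om_2^*]$ and $[\om_1^*,\om_2]$ appear with opposite signs in the two expressions. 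Hence $[\wti\om_1,\wti\om_2] = -[T_{\om_1}, T_{\om_2}]$.

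The principal obstacle will be Part (2): the bookkeeping required to reindex between the inversion sets $\{\beta>0 : w\beta<0\}$ and $\{\gamma>0 : w^{-1}\gamma<0\}$, together with the sign changes from $c_{-\gamma}=c_\gamma$ and from negation of a root in the pairing, is the kind of calculation where errors easily creep in. By contrast, a naive direct attempt at Part (3) would force one to analyze $[\om, t_{s_\beta}]$ for arbitrary positive $\beta$, which is genuinely complicated when $\beta$ is not simple (since \eqref{hecke} applies only for simple $\beta$); the shortcut via $\om^*$ bypasses this difficulty entirely.
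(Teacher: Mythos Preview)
Your proof is correct. Parts (1) and (2) follow essentially the same route as the paper: both use Lemma~\ref{l:formula} for $t_w\om t_{w^{-1}}$, conjugate the sum defining $T_\om$ term by term via $t_w t_{s_\beta}t_{w^{-1}}=t_{s_{w\beta}}$, and reindex by the sign of $w\beta$ to recover $T_{w(\om)}$. (A minor quibble: in Part (2) you write ``subtracting these two identities,'' but since their right-hand sides are negatives of each other, it is their \emph{sum} that vanishes; the conclusion is unaffected.)

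Part (3), however, is a genuine shortcut compared to the paper. The paper expands $[\wti\om_1,\wti\om_2]=[\om_1-T_{\om_1},\om_2-T_{\om_2}]$ and then explicitly computes $[T_{\om_1},\om_2]$ using Lemma~\ref{l:formula}, eventually establishing the auxiliary identity $[T_{\om_1},\om_2]+[\om_1,T_{\om_2}]=2[T_{\om_1},T_{\om_2}]$. Your argument sidesteps this entirely: writing $\wti\om=\tfrac12(\om-\om^*)$ and $T_\om=\tfrac12(\om+\om^*)$, you observe that $[\om_1,\om_2]=0$ (commutativity of $S(V^\vee)$) and $[\om_1^*,\om_2^*]=-[\om_1,\om_2]^*=0$ (anti-multiplicativity of $*$), so the two brackets are manifestly negatives of one another. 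This is cleaner and more conceptual; it exploits only the fact that $*$ is an anti-involution, without ever touching the commutation relation \eqref{hecke} for non-simple roots. The paper's longer computation does, however, make the intermediate quantity $[T_{\om_1},\om_2]$ explicit, and the closely related Lemma~\ref{l:tom} (the explicit formula for $[T_{\om_1},T_{\om_2}]$) is needed later in the proofs of Theorems~\ref{t:Omtilde} and~\ref{t:dirac}; your streamlined Part (3) does not replace that lemma.
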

\begin{proof}
As remarked above, property (1) is obvious from (\ref{eq:tomdef}). For (2), using
Lemma \ref{l:formula}, we have:
\begin{equation}
\begin{aligned}
t_{w}\wti\om t_{w^{-1}}&=t_w\omega t_{w^{-1}}-\frac 12
\sum_{\beta>0}c_\beta(\beta,\omega)t_wt_{s_\beta}t_{w^{-1}}\\
&=w(\omega)+\sum_{\beta>0:w\beta<0} c_\beta(\beta,\omega)
t_{w\beta}-\frac 12 \sum_{\beta>0}c_\beta(\beta,\omega)t_{w\beta}\\
&=w(\omega)+\frac
12\sum_{\beta>0:w\beta<0}c_\beta(\beta,\omega)t_{w\beta}-\frac 12 \sum_{\beta>0:w\beta>0}c_\beta(\beta,\omega)t_{w\beta}\\
&=w(\omega)-\frac 12\sum_{\beta'>0} c_{\beta'} (w^{-1}\beta', \omega)
t_{s_{\beta'}}=\wti {w(\omega)}.
\end{aligned}
\end{equation}
For the last step,
we set $\beta'=-w\beta$ in the first sum and $\beta'=w\beta$ in the second
sum, and also used that $c_{\beta'}=c_\beta$ since $c$ is $W$-invariant.

Finally, we verify (3). We have
\begin{align*}
[\wti\om_1,\wti\om_2]&=[\om_1-T_{\om_1},\om_2-T_{\om_2}]\\
&=[T_{\om_1},T_{\om_2}]-([T_{\om_1},\om_2]+[\om_1,T_{\om_2}]).
\end{align*}
 We do a direct calculation:
\[
[T_{\om_1},\om_2]=
\frac 12\sum_{\al>0}
c_\al(\al,\om_1)(t_{s_\al}\om_2 t_{s_\al}-\om_2)t_{s_\al}.
\]
Applying Lemma \ref{l:formula}, we get
\begin{align*}
[T_{\om_1},\om_2]&=\frac 12
\sum_{\al>0}c_\al (\al,\om_1)(s_\al(\om_2)-\om_2)t_{s_\alpha}
+\frac 12 \sum_{\substack{\alpha>0,\beta>0\\ s.t.~s_\al(\beta)<0}} c_\alpha c_\beta (\alpha, \om_1)(\beta,\om_2) t_{s_\al(\beta)}
t_{s_\al}\\
&=-\frac
12\sum_{\al>0}c_\al(\al,\om_1)(\al,\om_2)\al^\vee t_{s_\al} +\frac
12\sum_{\substack{\al>0,\beta>0\\s.t.~s_\al(\beta)<0}} c_\al c_\beta
(\al,\om_1)(\beta,\om_2)t_{s_\al}t_{s_\beta}.
\end{align*}
From this and Lemma \ref{l:tom}, it follows immediately that $[T_{\om_1},\om_2]+[\om_1,T_{\om_2}]=2 [T_{\om_1},T_{\om_2}]$.  This completes the proof of (3).
\end{proof}

\begin{theorem}\label{t:Omtilde} 
Let $\wti\Omega$ be the $W$-invariant element of
  $\mathbb H$ from Definition \ref{d:castilde}.  Recall the notation
of \eqref{eq:tom}.  Then 
\begin{equation}
\begin{aligned}
\wti\Omega=\Omega-\sum_{i=1}^n T_{\om_i}T_{\om^i}
&=\Omega
-
\frac
14\sum_{\substack{\al>0,\beta>0\\s.t.~s_\al(\beta)<0}}c_\al c_\beta \langle\al,\beta\rangle t_{s_\al}t_{s_\beta}.
\end{aligned}
\end{equation}
\end{theorem}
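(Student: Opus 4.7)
I would expand $\wti\Omega$ using $\omega = \wti\omega + T_\omega$ and establish the two claimed equalities in turn. Setting $A := \sum_i \omega_i T_{\omega^i}$, $B := \sum_i T_{\omega_i}\omega^i$, and $C := \sum_i T_{\omega_i}T_{\omega^i}$, direct expansion gives $\wti\Omega = \Omega - A - B + C$, so the first equality amounts to showing $A + B = 2C$.

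To prove $A + B = 2C$, I would exploit the anti-involution $*$. Linearity of $\omega \mapsto \wti\omega$, together with a routine change-of-basis computation, shows that $\wti\Omega$ is independent of the choice of dual bases; in particular, swapping the roles of $\{\omega_i\}$ and $\{\omega^i\}$ gives $\sum_i\wti\omega^i\wti\omega_i = \wti\Omega$. Combined with $\wti\omega^* = -\wti\omega$ from Proposition~\ref{p:tilde}(1), this yields
\[
\wti\Omega^* = \sum_i(\wti\omega_i\wti\omega^i)^* = \sum_i \wti\omega^i\wti\omega_i = \wti\Omega.
\]
On the other hand, expanding $\wti\Omega^*$ term-by-term using $\omega^* = -\omega + 2T_\omega$ from \eqref{eq:tomdef} and the easy identity $T_\omega^* = T_\omega$ gives $A^* = -B + 2C$, $B^* = -A + 2C$, and $C^* = C$ (after the same basis-swap in each $*$-expansion), so $\wti\Omega^* = \Omega + A + B - 3C$. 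Equating the two expressions for $\wti\Omega^*$ forces $A + B = 2C$, hence $\wti\Omega = \Omega - C$.

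For the second equality, I would compute $C$ directly. Substituting the definition \eqref{eq:tom} of $T_\omega$ and using $\sum_i(\alpha,\omega_i)(\beta,\omega^i) = \langle\alpha,\beta\rangle$ from \eqref{innprod},
\[
C = \frac{1}{4} \sum_{\alpha,\beta>0} c_\alpha c_\beta\langle\alpha,\beta\rangle\, t_{s_\alpha}t_{s_\beta}.
\]
The theorem thus reduces to the vanishing of the ``$s_\alpha\beta > 0$'' portion,
\[
S_2 := \sum_{\substack{\alpha,\beta>0 \\ s_\alpha\beta>0}} c_\alpha c_\beta\langle\alpha,\beta\rangle\, t_{s_\alpha}t_{s_\beta} = 0.
\]
Following the proof strategy of Lemma~\ref{l:tom}, I would apply the involution $(\alpha,\beta) \mapsto (\alpha, s_\alpha\beta)$ on this index set: using $c_{s_\alpha\beta} = c_\beta$, $\langle\alpha, s_\alpha\beta\rangle = -\langle\alpha,\beta\rangle$, and $s_\alpha s_{s_\alpha\beta} = s_\beta s_\alpha$, this negates the coefficient while inverting the $W$-element in the $t_{s_\alpha}t_{s_\beta}$ factor.

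The main obstacle will be closing the vanishing argument, since this involution alone delivers only a symplectic-type identity $S_2 + \widetilde{S}_2 = 0$ (where $\widetilde{S}_2$ is the analogous sum with the product reversed) and does not immediately force $S_2 = 0$; a second, complementary cancellation is required. I expect to finish by reducing to rank two: every pair of positive roots $(\alpha,\beta)$ lies in a unique rank-two root subsystem (of type $A_1 \times A_1$, $A_2$, $B_2$, or $G_2$ in the crystallographic case), and $S_2$ decomposes accordingly. Within each such subsystem the Weyl group is dihedral and every element is conjugate to its inverse, which combined with the involution collapses the finite sum to zero. The $A_1 \times A_1$ case is immediate since $\langle\alpha,\beta\rangle = 0$; the remaining cases reduce to short explicit checks.
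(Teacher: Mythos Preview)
There are two genuine gaps.

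\textbf{First equality.} Your $*$-argument is circular. The line ``so $\wti\Omega^* = \Omega + A + B - 3C$'' silently replaces $\Omega^*$ by $\Omega$. If you keep $\Omega^*$, equating your two expressions for $\wti\Omega^*$ yields only
\[
\Omega - \Omega^* = 2(A+B) - 4C,
\]
and this is exactly what one obtains by expanding $\Omega^* = \sum_i \omega_i^*(\omega^i)^*$ directly using $\omega^* = -\omega + 2T_\omega$ and the same basis swap. In other words, $A+B = 2C$ is \emph{equivalent} to $\Omega^* = \Omega$, so the latter cannot be assumed. The paper's argument is in the same spirit (it passes through the identification $\sum_i \omega_i^*{\omega^i}^* = \Omega$ in~\eqref{eq:star2}), so your idea matches the paper's, but an independent reason for $\Omega^* = \Omega$ must be supplied. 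One clean route: $\Omega^* - \Omega$ lies in $Z(\bH) = S(V^\vee)^W$; since $\omega^* \equiv -\omega$ modulo $\bC[W]$, the image of $\Omega^* - \Omega$ in the associated graded algebra vanishes, so it has filtration degree $\le 1$ in $S(V^\vee)^W$, hence is a scalar, and then reality together with $(\,\cdot\,)^{**}=\mathrm{id}$ forces that scalar to be $0$.

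\textbf{Second equality.} Your involution $(\alpha,\beta)\mapsto(\alpha,s_\alpha\beta)$ correctly gives $S_2 = -\wti S_2$, but the proposed finish via ``every element is conjugate to its inverse'' does not close the argument: conjugacy of $w$ and $w^{-1}$ in a dihedral $W'$ does \emph{not} give $t_w = t_{w^{-1}}$ in $\bC[W']$, and you have no a priori reason that the rank-two piece of $S_2$ is a class function (so that coefficients would be constant on conjugacy classes). The paper instead invokes the pairing from Lemma~\ref{l:tom}, namely $(\alpha,\beta)\leftrightarrow(s_\alpha\beta,\alpha)$. The crucial difference is that this pairing \emph{preserves} the product $s_\alpha s_\beta$ (since $s_{s_\alpha\beta}s_\alpha = s_\alpha s_\beta$) while negating the coefficient $\langle s_\alpha\beta,\alpha\rangle = -\langle\alpha,\beta\rangle$, so the cancellation occurs inside a single $t_w$-coefficient rather than between $t_w$ and $t_{w^{-1}}$. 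Your rank-two reduction is a valid fallback, but then one must simply enumerate the pairs in each dihedral type and check the coefficient sums vanish; the conjugacy-to-inverse remark does not do this for you.
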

 
\begin{proof}
From Definition \ref{d:castilde}, we have
\begin{equation}\label{eq:tom2}
\wti\Omega=
\sum_{i=1}^n\om_i\om^i-\sum_{i=1}^n(\om_iT_{\om^i}+T_{\om_i}\om^i)+\sum_{i=1}^n T_{\om_i}T_{\om^i}.
\end{equation}
On the other hand, 
we have $\wti\om=(\om-\om^*)/2,$ and so
\begin{equation}\label{eq:star}
\wti\om_i\wti\om^i=(\om_i\om^i+\om_i^*{\om^i}^*)/4-(\om_i{\om^i}^*+\om_i^*\om^i)/4.
\end{equation}
Summing \eqref{eq:star} over $i$ from $1$ to $n$, we find:
\begin{equation}
  \label{eq:star2}
\begin{aligned}
\wti \Omega &=\sum_{i=1}^n\frac{\om_i\om^i+\om_i^*{\om^i}^*}{4}-\sum_{i=1}^n\frac{\om_i{\om^i}^*+\om_i^*\om^i }{4}\\&=
\frac{1}{2}\sum_{i=1}^n\om_i\om^i -\frac14\sum_{i=1}^n
[\om_i(-\om^i+2T_{\om^i})+(-\om_i+2T_{\om_i})\om^i]\\
&=
\sum_{i=1}^n\om_i\om^i-\frac12\sum_{i=1}^n(\om_iT_{\om^i}+T_{\om_i}\om^i).
\end{aligned}
\end{equation}
We conclude from \eqref{eq:tom2} and \eqref{eq:star2} that
\begin{equation}
  \label{eq:conclusion1}
  \sum_{i=1}^n T_{\om_i}T_{\om^i}=\frac12\sum_{i=1}^n(\om_iT_{\om^i}+T_{\om_i}\om^i),
\end{equation}
and
\begin{equation}
  \label{eq:conclusion2}
  \wti\Omega=\Omega-\frac12\sum_{i=1}^n(\om_iT_{\om^i}+T_{\om_i}\om^i)=\Omega -\sum_{i=1}^n T_{\om_i}T_{\om^i}.
\end{equation}
This is the first assertion of the theorem.  For the remainder, 
write out the definition of  $T_{\om_i}$ and $T_{\om^i}$, and use (\ref{innprod}):
\begin{equation}
  \label{eq:sumtomi}
  \sum_{i=1}^n T_{\om_i}T_{\om^i}
=\frac 14\sum_{\al,\beta>0}c_\al c_\beta\langle\al,\beta\rangle t_{s_\al}
  t_{s_\beta}
=
   \frac 14\sum_{\substack{\al>0,\beta>0 \\s_\al(\beta)<0}} 
c_{\al} c_\beta \langle\al,\beta\rangle t_{s_\al} t_{s_\beta},
\end{equation}
with the last equality following as in the proof of Lemma \ref{l:tom}.
\end{proof}

\begin{corollary}\label{c:criterion}  
Retain the setting of Proposition \ref{p:criterion}
but further assume $(\pi,X)$ is irreducible and unitary with
central character $\chi_\nu$ with $\nu \in V$ (as in Definition \ref{d:cc}).  
Let $(\sigma, U)$ be an irreducible representation of $W$ such that
$\Hom_{W}(U,X) \neq 0$.
Then
\begin{equation}\label{ineqeff}
\langle\nu,\nu\rangle\le c(\sigma)
\end{equation}
where
\begin{equation}\label{e:char}
c(\sigma)
=\frac 14\sum_{\al>0}c_\al^2\langle\al,\al\rangle+\frac 14\sum_{\substack{\al>0,\beta>0\\\al\neq\beta, s_\al(\beta)<0}}c_\al c_\beta\langle\al,\beta\rangle \frac {\tr_\sigma(s_\al s_\beta)}{\tr_\sigma(1)}
\end{equation}
is the scalar by which $\Omega_W$ acts in $U$ 
and  $\tr_\sigma$ denotes the character of $\sigma$.
\end{corollary}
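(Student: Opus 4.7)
The plan is to combine the three main tools developed so far: Proposition \ref{p:criterion} (unitarity gives $(\pi(\wti\Omega)x,x)_X \le 0$), Theorem \ref{t:Omtilde} (the explicit decomposition $\wti\Omega = \Omega - \Omega_W$), and Lemma \ref{scalar} (the scalar action of $\Omega$ by $\langle\nu,\nu\rangle$ at central character $\chi_\nu$). The key additional observation is that, although $\Omega_W$ is no longer central in $\bH$, it sits inside $\bC[W]$ and is $W$-invariant under conjugation (as noted immediately after \eqref{eq:omegaW}), hence is central in $\bC[W]$. Therefore $\sigma(\Omega_W)$ is a scalar on any irreducible $W$-representation $U$; call this scalar $c(\sigma)$.

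First I would pick a nonzero vector $x$ in the $\sigma$-isotypic component of $X$ viewed as a $W$-module (possible since $\Hom_W(U,X) \ne 0$). On this component $\pi(\Omega_W)$ acts by $c(\sigma)$, while by Lemma \ref{scalar} the operator $\pi(\Omega)$ acts by $\langle\nu,\nu\rangle$ on all of $X$. Applying Theorem \ref{t:Omtilde}, we compute
\[
(\pi(\wti\Omega)x,x)_X = (\pi(\Omega)x,x)_X - (\pi(\Omega_W)x,x)_X = \bigl(\langle\nu,\nu\rangle - c(\sigma)\bigr)(x,x)_X.
\]
Since $X$ is unitary, Proposition \ref{p:criterion} forces this to be $\le 0$, and positivity of the form then yields $\langle\nu,\nu\rangle \le c(\sigma)$, which is the inequality \eqref{ineqeff}.

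It remains to establish the explicit formula \eqref{e:char} for $c(\sigma)$. Since $\Omega_W$ acts by $c(\sigma)\cdot\mathrm{Id}_U$, we have $c(\sigma) = \tr_\sigma(\Omega_W)/\tr_\sigma(1)$. Using the expression for $\Omega_W$ from Theorem \ref{t:Omtilde}, I would split the sum according to whether $\alpha = \beta$ or $\alpha \ne \beta$. For $\alpha = \beta$ the condition $s_\alpha(\beta) < 0$ is automatic (since $s_\alpha(\alpha) = -\alpha$) and $t_{s_\alpha}t_{s_\alpha}=1$, so this part of $\Omega_W$ equals $\frac14\sum_{\alpha>0} c_\alpha^2\langle\alpha,\alpha\rangle$ and contributes the first term of \eqref{e:char}. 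For $\alpha \ne \beta$, dividing $\tr_\sigma(t_{s_\alpha}t_{s_\beta}) = \tr_\sigma(s_\alpha s_\beta)$ by $\tr_\sigma(1)$ produces the second term. Combining the two gives \eqref{e:char}.

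There is no serious obstacle; the whole argument is essentially a bookkeeping reduction once Theorem \ref{t:Omtilde} is in hand. The only subtlety worth flagging is the shift of viewpoint from $\bH$-centrality (which $\wti\Omega$ lacks) to $\bC[W]$-centrality of $\Omega_W$, which is what lets us make scalar sense of $c(\sigma)$ and thereby convert the a priori operator-theoretic inequality of Proposition \ref{p:criterion} into the concrete numerical bound on $\langle\nu,\nu\rangle$.
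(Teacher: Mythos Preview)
Your proposal is correct and follows essentially the same approach as the paper: the paper's proof is a single sentence pointing to Theorem \ref{t:Omtilde} and Proposition \ref{p:criterion} applied to a vector in the $\sigma$-isotypic component, and you have simply unpacked that sentence in full detail, including the derivation of the explicit formula \eqref{e:char} for $c(\sigma)$.
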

\begin{proof}
The result follow from the formula in Theorem \ref{t:Omtilde}
by applying Proposition \ref{p:criterion} to a 
vector $x$ in the $\sigma$ isotypic component of $X$.
\end{proof}

Theorem \ref{t:Omtilde} will play an important role in the
proof of Theorem \ref{t:dirac} below.

\section{The Dirac operator}\label{sec:dirac}

Throughout this section we fix the setting of Section \ref{ss:rs}.

\subsection{The Clifford algebra}
\label{ss:cliff}
Denote by $ C(V^\vee)$ the Clifford algebra defined by $V^\vee$
and $\langle~,~\rangle$. 
More precisely, $
C(V^\vee)$ is the
quotient of the tensor algebra of $V^\vee$ by the ideal generated by
$$\om\otimes \om'+\om'\otimes \om+2\langle \om,\om'\rangle,\quad
\om,\om'\in V^\vee.$$
Equivalently,  $ C(V^\vee)$ is the associative algebra
with unit generated by $V^\vee$ with relations:
\begin{equation}
\om^2=-\langle\om,\om\rangle,\quad \om\om'+\om'\om=-2\langle\om,\om'\rangle.
\end{equation}
Let $\mathsf{O}(V^\vee)$ denote the group of orthogonal transformation of
$V^\vee$ with respect to $\langle~,~\rangle$. This acts by algebra
automorphisms on $ C(V^\vee)$, and the action of $-1\in
\mathsf{O}(V^\vee)$ induces a grading
\begin{equation}
 C(V^\vee)= C(V^\vee)_{\mathsf{even}}+  C(V^\vee)_{\mathsf{odd}}.
\end{equation}
Let $\ep$ be the automorphism of $ C(V^\vee)$ which is $+1$ on $
C(V^\vee)_{\mathsf{even}}$ and $-1$ on $ C(V^\vee)_{\mathsf{odd}}$.
Let ${}^t$ be the transpose antiautomorphism of $ C(V^\vee)$
characterized by
\begin{equation}
\om^t=-\om,\ \om\in V^\vee,\quad (ab)^t=b^ta^t,\ a,b\in C(V^\vee).
\end{equation}
The Pin group  is
\begin{equation}\label{pin}
\mathsf{Pin}(V^\vee)=\{a\in  C(V^\vee)\; |\; \ep(a) V^\vee a^{-1}\subset
V^\vee,~ a^t=a^{-1}\}.
\end{equation}
It sits in a short exact sequence
\begin{equation}\label{ses}
1\longrightarrow \{\pm 1\} \longrightarrow
\mathsf{Pin}(V^\vee)\xrightarrow{\ \ p\ \ } \mathsf{O}(V^\vee)\longrightarrow 1,
\end{equation}
where the projection $p$ is given by $p(a)(\om)=\ep(a)\om a^{-1}$.

We call a simple
$ C[V^\vee]$ module
$(\gamma, S)$ of dimension $2^{[\dim
    V/2]}$ a spin module for $C(V^\vee)$.
When $\dim V$ is even, there is only one such module (up to
equivalence), but if $\dim V$ is odd, there are two inequivalent
spin modules.
We may endow such a module 
with a positive
definite Hermitian form $\langle ~,~\rangle_{ S}$ such that
\begin{equation}
\label{e:sform}
\langle\gamma(a)s,s'\rangle_{ S}=\langle s,\gamma(a^t)\rangle_{ S},\quad\text{for all
  }a\in  C(V^\vee)\text{ and } s,s'\in  S. 
\end{equation}
In all cases,
$(\gamma, S)$ restricts to an irreducible unitary
representation of $\mathsf{Pin}(V^\vee)$.

\subsection{The Dirac operator $D$}

\begin{definition}
\label{d:dirac}
Let  $\{\om_i\}$, $\{\om^i\}$ be dual bases of $V^\vee$,
and recall the elements 
$\wti\om_i\in\bH$ from (\ref{omtilde}).  The abstract
Dirac operator is defined as
\[
\caD = \sum_i \wti \omega_i \otimes \omega_i \in \bH \otimes C(V^\vee).
\]
It is elementary to verify that $\caD$ does not depend on the choice of
dual bases.

Frequently we will work with
a fixed spin module $(\gamma, S)$ for $C(V^\vee)$ 
and a fixed $\bH$-module $(\pi,X)$.  In this
setting, it
will be convenient to define
the Dirac operator for $X$ (and $S$)
as $D=(\pi \otimes \gamma)(\caD)$.
Explicitly,
\begin{equation}\label{dirac}
D=\sum_{i=1}^n \pi(\wti\om_i)\otimes \gamma(\om^i)\in
\operatorname{End}_{\bH\otimes  C(V^\vee)}(X\otimes S).
\end{equation}
\end{definition}

\begin{lemma}
\label{l:adj}
Suppose $X$ is a Hermitian $\bH$-module with invariant form $(~,~)_X$.
With notation as in \eqref{e:sform}, endow $X \otimes S$ with the
Hermitian form $(x\otimes s, x' \otimes s')_{X \otimes S} 
= (x,x')_X \otimes (s,s')_S$.  Then the operator $D$ is self adjoint with
respect to $(~,~)_{X \otimes S}$,
\begin{equation}
( D (x\otimes s), x'\otimes s')_{X\otimes S}=
(x\otimes s,D(x'\otimes s'))_{X\otimes S}
\end{equation}
\end{lemma}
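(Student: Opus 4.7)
The plan is to expand $D$ in the defining dual bases and move each tensor factor to the other side of the inner product using the two compatibility properties available to us: the invariance $(\pi(h)x,y)_X = (x,\pi(h^*)y)_X$ on the $\bH$-side, and the relation $\langle \gamma(a)s, s' \rangle_S = \langle s, \gamma(a^t) s'\rangle_S$ on the Clifford side. The key observation is that the two sign twists introduced by these moves must cancel, giving back $D$ rather than $-D$.

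More concretely, I would start from
\begin{equation*}
(D(x \otimes s), x' \otimes s')_{X \otimes S}
= \sum_{i=1}^n (\pi(\wti\om_i) x, x')_X \, \langle \gamma(\om^i) s, s' \rangle_S
\end{equation*}
and apply the two invariance properties factor by factor to rewrite this as
\begin{equation*}
\sum_{i=1}^n (x, \pi(\wti\om_i^{\,*}) x')_X \, \langle s, \gamma((\om^i)^t) s' \rangle_S.
\end{equation*}
Now I invoke Proposition \ref{p:tilde}(1) to replace $\wti\om_i^{\,*}$ by $-\wti\om_i$ and the defining property $(\om^i)^t = -\om^i$ of the transpose antiautomorphism on $C(V^\vee)$ to replace $\gamma((\om^i)^t)$ by $-\gamma(\om^i)$. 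The two minus signs multiply to $+1$, and the resulting expression is exactly $(x \otimes s, D(x' \otimes s'))_{X \otimes S}$.

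There is essentially no obstacle here — the lemma is a bookkeeping consequence of the fact that both $\wti\om$ and $\om \in V^\vee \subset C(V^\vee)$ are antifixed by the relevant antiinvolutions ($*$ on $\bH$, transpose on $C(V^\vee)$), so that the tensor $\wti\om_i \otimes \om^i$ is \emph{fixed} by the combined antiinvolution. The only mild point worth noting is that since $\{\om_i\}$ and $\{\om^i\}$ are dual bases rather than a single self-dual basis, one should not confuse the roles of the two factors; but both relations $\wti\om^* = -\wti\om$ and $\om^t = -\om$ hold for \emph{every} $\om \in V^\vee$, so the calculation goes through verbatim. (Alternatively, and perhaps more cleanly, one could simply choose $\{\om_i\}$ to be an orthonormal self-dual basis, using that $D$ is independent of the choice of dual bases as remarked in Definition \ref{d:dirac}.)
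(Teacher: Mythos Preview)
Your proof is correct and is precisely the ``straight-forward verification'' the paper alludes to: move each tensor factor across using $\wti\om^*=-\wti\om$ (Proposition~\ref{p:tilde}(1)) and $\om^t=-\om$ together with the invariance properties of the two forms, and observe that the signs cancel. There is nothing to add.
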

\begin{proof}
This follows from a straight-forward verification.
\end{proof}

We immediately deduce the following analogue of Proposition \ref{p:criterion}.
\begin{proposition}
\label{p:adj}
\label{p:dcriterion}
In the setting of Lemma \ref{l:adj}, a Hermitian $\bH$-module 
is unitary only if
\begin{equation}
\label{eq:dcriterion}
(D^2 (x\otimes s), x\otimes s)_{X\otimes S} \geq 0, \qquad \text{ for all $x\otimes s \in X \otimes S$}.
\end{equation}
\end{proposition}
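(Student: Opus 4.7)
The plan is to derive this statement as an immediate consequence of the self-adjointness result in Lemma \ref{l:adj}, together with positivity of the tensor product form. The logical structure parallels how Proposition \ref{p:criterion} was extracted from the $*$-invariance of $\wti{\Omega}$, but is even simpler because $D$ itself (not just $D^2$) is already self-adjoint.

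First, I would observe that if $(\pi,X)$ is unitary, then the invariant Hermitian form $(~,~)_X$ is by definition positive definite. Since $(~,~)_S$ is positive definite by the construction recalled in \eqref{e:sform}, the tensor product form $(~,~)_{X \otimes S}$ on $X \otimes S$ is positive definite as well.

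Next, I would apply Lemma \ref{l:adj}, which tells us that $D$ is self-adjoint with respect to $(~,~)_{X \otimes S}$. Using self-adjointness once, for any $v = x \otimes s \in X \otimes S$ (extended by linearity to general elements of $X \otimes S$), we have
\begin{equation*}
(D^2 v, v)_{X \otimes S} = (D v, D v)_{X \otimes S}.
\end{equation*}
Since the form is positive definite, the right-hand side is nonnegative, giving \eqref{eq:dcriterion}.

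There is no real obstacle here: the content of the proposition is entirely packaged in Lemma \ref{l:adj}, and the argument is a one-line consequence of self-adjointness plus positivity. The only thing worth emphasizing in the write-up is that the inequality is a \emph{necessary} condition for unitarity — it uses positive definiteness of $(~,~)_X$ in an essential way — so it serves as the Dirac-operator analogue of Proposition \ref{p:criterion}, and will later be refined (via the $\wti W$-equivariance of $\caD$) into Parthasarathy-type inequalities on $\wti{W}$-isotypic components.
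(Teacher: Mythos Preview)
Your argument is correct and matches the paper's own treatment exactly: the paper simply states that the proposition is immediately deduced from Lemma \ref{l:adj}, and your one-line computation $(D^2 v,v)_{X\otimes S}=(Dv,Dv)_{X\otimes S}\ge 0$ via self-adjointness and positive definiteness of the tensor form is precisely what is intended.
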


To be a useful criterion for unitarity, we need to establish
a formula for $D^2$ (Theorem \ref{t:dirac} below).

\subsection{The spin cover $\wti W$}
\label{ss:wtilde}
The Weyl group $W$ acts by orthogonal
transformations on $V^\vee$, and thus is a subgroup of
$\mathsf{O}(V^\vee).$ We define the group $\wti W$ in
$\mathsf{Pin}(V^\vee)$: 
\begin{equation}
\wti W:=p^{-1}(\mathsf{O}(V^\vee))\subset \mathsf{Pin}(V^\vee),\text{ where $p$
  is as in (\ref{ses}).}
\end{equation}
Therefore, $\wti W$ is a central extension of $W$,
\begin{equation}
1\longrightarrow \{\pm 1\}  \longrightarrow
\wti W\xrightarrow{\ \ p\ \ } W\longrightarrow 1.
\end{equation}
We will need a few details about the structure of $\wti W$.
For each $\alpha \in R$, define elements $f_\al \in C(V^\vee)$ via
\begin{equation}
\label{e:fal}
f_\al = \al^\vee / | \al^\vee | \in V^\vee \subset C(V^\vee).
\end{equation}
It follows easily that $p(f_\al) = s_\al$, the
reflection in $W$ through $\al$.  Thus
$\{f_\al \; | \; \al \in R\}$  (or just $\{f_\al \; | \; \al \in \Pi\}$) 
generate $\wti W$.  Obviously $f_\al^2 = -1$.  
Slightly more delicate considerations
 (e.g.~\cite[Theorem 3.2]{Mo}) show that
if  $\al, \beta \in R$, $\gamma = s_\al(\beta)$,
then 
\begin{equation}
\label{e:wrel}
f_\beta f_\al = - f_\al f_\gamma.
\end{equation}

A representation of $\wti W$ is called genuine if it does not factor
to $W$, i.e.~if $-1$ acts nontrivially.  Otherwise it is called
nongenuine.  (Similar terminology applies to $\bC[\wti W]$ modules.)
Via restriction, we can regard a spin module $(\gamma, S)$ for
$C(V^\vee)$ as a unitary $\wti W$ representation.  Clearly it is
genuine.  Since $R^\vee$ spans $V^\vee$, it is also irreducible
(e.g.~\cite{Mo} Theorem 3.3). 
For notational convenience, we lift the $\sgn$ representation of
$W$ to a nongenuine representation of $\wti W$ which we also
denote by $\sgn$.  

We write $\rho$ for the diagonal embedding of $\bC[\wti W]$ into
$\bH \otimes C(V^\vee)$ defined by
extending
\begin{equation}
\label{e:rho}
\rho(\wti w) = t_{p(\wti w)} \otimes \wti w
\end{equation}
linearly.


\begin{lemma} 
\label{l:winvdirac}
Recall the notation of Definition \ref{d:dirac} and \eqref{e:rho}.
For $\wti w\in \wti W$, 
\[
\rho(\wti w) \caD
=\sgn(\wti w) \caD \rho(\wti w)
\]
as elements of $\bH \otimes C(V^\vee)$.
\end{lemma}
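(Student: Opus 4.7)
The plan is to expand $\rho(\wti w)\caD$ directly in $\bH \otimes C(V^\vee)$ and simplify each tensor factor separately. Writing $w = p(\wti w)$, we have
\[
\rho(\wti w)\caD = \sum_i t_w \wti\omega_i \otimes \wti w\, \omega_i.
\]
For the first factor, Proposition \ref{p:tilde}(2) gives $t_w \wti\omega_i = \wti{w(\omega_i)}\, t_w$. So the only real work is to compute $\wti w\,\omega_i$ inside $C(V^\vee)$ and to pull $\wti w$ to the right.

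For the second factor, I would use the defining relation of $\Pin(V^\vee)$ in \eqref{pin}: for $\omega \in V^\vee$, $p(\wti w)(\omega) = \ep(\wti w)\,\omega\, \wti w^{-1}$. Since $\wti W$ is generated by the odd elements $f_\al \in V^\vee \subset C(V^\vee)_{\odd}$, writing $\wti w = f_{\al_1}\cdots f_{\al_k}$ gives $\ep(\wti w) = (-1)^k \wti w$. The integer $k$ differs from $\ell(w)$ by an even number (any two expressions of $w$ in simple reflections have the same parity), so $(-1)^k = \sgn(w)$. Therefore $\ep(\wti w) = \sgn(w)\,\wti w$, whence
\[
\wti w\,\omega = \sgn(w)\, w(\omega)\, \wti w \qquad \text{for all } \omega \in V^\vee.
\]

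Substituting both simplifications back, one obtains
\[
\rho(\wti w)\caD = \sgn(w)\sum_i \wti{w(\omega_i)} \otimes w(\omega_i) \cdot (t_w \otimes \wti w).
\]
Since $\{w(\omega_i)\}$ is again an orthonormal basis of $V^\vee_0$ and $\caD$ is independent of the orthonormal basis (Definition \ref{d:dirac}), the summation equals $\caD$ itself, giving $\rho(\wti w)\caD = \sgn(w)\, \caD\, \rho(\wti w)$. Viewing $\sgn$ as the promised nongenuine lift to $\wti W$, this is exactly $\sgn(\wti w)\, \caD\, \rho(\wti w)$.

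The only subtle point is the parity bookkeeping in the middle step: the sign $\sgn(w)$ enters because elements of $V^\vee$ are odd in $C(V^\vee)$ and $\ep$ appears in the definition of the covering map $p$. Everything else is a direct application of Proposition \ref{p:tilde}(2) and the basis-invariance of $\caD$.
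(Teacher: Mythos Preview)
Your proof is correct and follows essentially the same approach as the paper. Both arguments use Proposition~\ref{p:tilde}(2) for the $\bH$-factor and then establish the Clifford identity $\wti w\,\omega\,\wti w^{-1} = \sgn(\wti w)\,(w\cdot\omega)$; the only difference is that the paper verifies this identity by reducing to the generators $f_\al$ and computing directly in $C(V^\vee)$, whereas you obtain it for all $\wti w$ at once from the definition of $p$ together with the parity observation $\ep(\wti w) = \sgn(w)\,\wti w$. One small remark: your justification that $(-1)^k = \sgn(w)$ need not invoke simple reflections at all, since each $p(f_{\al_i}) = s_{\al_i}$ has determinant $-1$ regardless of whether $\al_i$ is simple.
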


\begin{proof} From the definitions and Proposition \ref{p:tilde}(2), we have
\begin{align*}
 \rho(\wti w) \caD
  \rho(\wti w^{-1})
  &=\sum_{i} t_{p(\wti w)} \wti \om_i t_{p(\wti w^{-1})}\otimes
  \wti w \om^i \wti w^{-1}\\
  &=\sum_{i} \wti{p(\wti w)\cdot\omega_i}\otimes
  \wti w \om^i \wti w^{-1}
   \end{align*}
where we have used the $\cdot$ to emphasize the usual action
of $W$ on $S(V^\vee)$.
We argue that in $C(V^\vee)$
\begin{equation}
\label{e:lem}
\wti w \om^i \wti w^{-1}
=\sgn(\wti w) (p(\wti w)\cdot \om^i).
\end{equation}
Then the lemma follows from the
fact that the definition of $\caD$ is independent of the choice of dual bases.

Since $\wti W$ is generated by the various
$f_\al$ for $\al$ simple, it is sufficient to
verify \eqref{e:lem} for $\wti w=f_\al$.  This follows from direct calculation:
$f_\al\om^i f_\al^{-1}=-\frac 1{\langle\al^\vee,\al^\vee\rangle}\al^\vee
\om^i\al^\vee=-\frac 1{\langle\al^\vee,\al^\vee\rangle}
\al^\vee(-\al^\vee\om^i-2\langle\om,\al^\vee\rangle)=-\om^i+(\om,\al)\al^\vee=- s_\al\cdot\om^i.$
\end{proof}

\subsection{A formula for $\caD^2$} 
Set
\begin{equation}\label{omWtilde}
\Omega_{\wti W}=\frac 14\sum_{\substack{\al>0,\beta>0\\s_\al(\beta)<0}}
c_\al c_\beta |\al| |\beta| f_\al f_\beta.
\end{equation}
This is a complex linear combination of elements of $\wti W$, 
i.e.~an element of $\bC[\wti W]$.
Using \eqref{e:wrel}, it is easy to see $\Omega_{\wti
  W}$ is invariant under the conjugation action of $\wti W$.

\begin{theorem}\label{t:dirac}
With notation as in \eqref{casimir},
\eqref{dirac}, \eqref{e:rho}, and \eqref{omWtilde},
\begin{equation}
\caD^2=-\Omega\otimes 1+\rho(\Omega_{\wti W}), 
\end{equation}
as elements of $\bH \otimes C(V^\vee)$.
\end{theorem}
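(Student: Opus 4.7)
The plan is to expand $\caD^2$ directly from the definition and then collect terms using Theorem \ref{t:Omtilde}.  It is convenient to fix an orthonormal basis $\{\omega_i\}$ of $V^\vee$ (so that $\omega^i=\omega_i$); then
\begin{equation*}
\caD^2=\sum_{i,j}\wti\omega_i\wti\omega_j\otimes \omega_i\omega_j.
\end{equation*}
The Clifford relations give $\omega_i^2=-1$ and $\omega_i\omega_j=-\omega_j\omega_i$ for $i\neq j$, so the diagonal terms collapse to $-\wti\Omega\otimes 1$ and the off-diagonal terms antisymmetrize, yielding
\begin{equation*}
\caD^2=-\wti\Omega\otimes 1+\sum_{i<j}[\wti\omega_i,\wti\omega_j]\otimes\omega_i\omega_j.
\end{equation*}

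The crux is to rewrite the commutator sum in terms of the generators $f_\alpha$ of $\wti W$.  Proposition \ref{p:tilde}(3) gives $[\wti\omega_i,\wti\omega_j]=-[T_{\omega_i},T_{\omega_j}]$, and Lemma \ref{l:tom} expands this as a double sum indexed by pairs $(\alpha,\beta)$ with $\alpha,\beta>0$ and $s_\alpha(\beta)<0$.  Interchanging the orders of summation, the task reduces to identifying the Clifford element
\begin{equation*}
\sum_{i<j}\bigl((\alpha,\omega_i)(\beta,\omega_j)-(\beta,\omega_i)(\alpha,\omega_j)\bigr)\omega_i\omega_j
\end{equation*}
for each such $(\alpha,\beta)$.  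Let $\phi\colon V\to V^\vee$ be the isomorphism $\phi(v)=\sum_i(v,\omega_i)\omega_i$ induced by $\langle\cdot,\cdot\rangle$.  By the antisymmetry of the coefficient in $(i,j)$ and the antisymmetry of $\omega_i\omega_j$, the displayed sum equals $\tfrac12\bigl(\phi(\alpha)\phi(\beta)-\phi(\beta)\phi(\alpha)\bigr)$, which via $\phi(\alpha)\phi(\beta)+\phi(\beta)\phi(\alpha)=-2\langle\alpha,\beta\rangle$ simplifies to $\phi(\alpha)\phi(\beta)+\langle\alpha,\beta\rangle$.  A direct computation from $(\alpha,v')=\langle\phi(\alpha),v'\rangle$ shows $\phi(\alpha)=|\alpha|f_\alpha$, so the Clifford factor equals $|\alpha||\beta|f_\alpha f_\beta+\langle\alpha,\beta\rangle$.

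Assembling the pieces, the $|\alpha||\beta|f_\alpha f_\beta$ contributions, together with the $\bH$-factor $c_\alpha c_\beta t_{s_\alpha}t_{s_\beta}=c_\alpha c_\beta t_{p(f_\alpha f_\beta)}$, collect to $\rho(\Omega_{\wti W})$, while the $\langle\alpha,\beta\rangle$ contributions combine with $-\wti\Omega\otimes 1$ through Theorem \ref{t:Omtilde} (which writes $\wti\Omega=\Omega-\tfrac14\sum c_\alpha c_\beta\langle\alpha,\beta\rangle\, t_{s_\alpha}t_{s_\beta}$) to produce exactly $-\Omega\otimes 1$.  The main obstacle is bookkeeping the signs coming from (i) the Clifford convention $\omega\omega'+\omega'\omega=-2\langle\omega,\omega'\rangle$, (ii) the sign in Proposition \ref{p:tilde}(3), and (iii) the behavior of $\rho$ on the central element $-1_{\wti W}\in \wti W$, and checking that the scalar $\langle\alpha,\beta\rangle$ terms from the two sources cancel cleanly.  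Once this is verified, the identity $\caD^2=-\Omega\otimes 1+\rho(\Omega_{\wti W})$ drops out.
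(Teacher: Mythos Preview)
Your proposal is correct and follows essentially the same route as the paper's proof: expand $\caD^2$ in an orthonormal basis, separate the diagonal into $-\wti\Omega\otimes 1$ and the off-diagonal into $\sum_{i<j}[\wti\omega_i,\wti\omega_j]\otimes\omega_i\omega_j$, then use Theorem~\ref{t:Omtilde}, Proposition~\ref{p:tilde}(3), and Lemma~\ref{l:tom} to rewrite both pieces. Your identification of the Clifford factor via the map $\phi$ (so that $\phi(\alpha)=|\alpha|f_\alpha$ and the inner sum becomes $|\alpha||\beta|f_\alpha f_\beta+\langle\alpha,\beta\rangle$) is exactly what the paper does, only phrased a bit more invariantly.

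One small point in your final assembly deserves care. The sum in Theorem~\ref{t:Omtilde} runs over all $\alpha,\beta>0$ with $s_\alpha(\beta)<0$, hence includes the diagonal $\alpha=\beta$, contributing the scalar $\tfrac14\sum_{\alpha>0}c_\alpha^2\langle\alpha,\alpha\rangle$. On the other hand, the $\langle\alpha,\beta\rangle$ terms coming from the commutator side are effectively only off-diagonal (the $\alpha=\beta$ coefficient in Lemma~\ref{l:tom} vanishes). So these two collections of $\langle\alpha,\beta\rangle$ terms do not cancel completely to $-\Omega\otimes 1$: a scalar $\tfrac14\sum_{\alpha>0}c_\alpha^2|\alpha|^2$ remains. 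This leftover scalar is precisely the diagonal ($\alpha=\beta$) contribution to $\rho(\Omega_{\wti W})$, since $f_\alpha^2=-1$ in $C(V^\vee)$. Once you track this (and the overall sign coming from Proposition~\ref{p:tilde}(3) and the Clifford relation), the identity follows as claimed. This is exactly the bookkeeping you flagged, so just make sure your write-up separates the diagonal and off-diagonal pieces explicitly in the final step rather than asserting a clean cancellation.
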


\begin{proof}
It will be useful below to set \begin{equation}
R^2_\circ:=\{(\al,\beta)\in R\times R: \al>0,\beta>0,\al\neq\beta, s_\al(\beta)<0\}.
\end{equation}
To simplify notation, we fix a self-dual (orthonormal) basis
$\{\om_1: ~i=1,\dots, n\}$ of $V^\vee$. From
Definition \ref{d:dirac}, we have
\[
\caD^2=\sum_{i=1}^n\wti\om_i^2\otimes \om_i^2+\sum_{i\neq
  j}\wti\om_i\wti\om_j\otimes \om_i\om_j
\]
Using $\om_i^2 =-1$ and $\om_i\om_j = - \om_i\om_j$ in $C(V^\vee)$ and the notation of
Definition \ref{d:castilde}, we get
\[
\caD^2=-\wti\Omega\otimes 1+
\sum_{i<j}[\wti\om_i,\wti\om_j]\otimes \om_i\om_j.
\]
Applying Theorem \ref{t:Omtilde} to the first term
and Proposition \ref{p:tilde}(3) to the second,
we have
  \begin{align*}
\caD^2
=&-\Omega\otimes 1+
\frac
14\sum_{\al>0}\langle\al,\al\rangle +
\frac
14\sum_{(\al,\beta)\in R^2_\circ} c_\al c_\beta\langle
\al,\beta\rangle t_{s_\al}t_{s_\beta}\otimes 1\\
&-\sum_{i<j}[T_{\om_i},T_{\om_j}]\otimes \om_i\om_j.
\end{align*}
Rewriting $[T_{\om_i},T_{\om_j}]$ using Proposition \ref{p:tilde}(3) ,
this becomes
\begin{align*}
\caD^2=&-\Omega\otimes 1+\frac
14\sum_{\al>0}\langle\al,\al\rangle+
\frac
14\sum_{(\al,\beta)\in R^2_\circ} c_\al c_\beta\langle
\al,\beta\rangle t_{s_\al}t_{s_\beta}\otimes 1\\
&-\frac 14\sum_{i<j}\sum_{(\al,\beta)\in R^2_\circ}c_\al c_\beta
((\om_i,\al)(\om_j,\beta)-(\om_i,\beta)(\om_j,\al))t_{s_\al}t_{s_\beta}\otimes
\om_i\om_j,
\end{align*}
and since $\om_i\om_j = -\om_j\om_i$ in $C(V^\vee)$,
\begin{align*}
\caD^2 &=-\Omega\otimes 1+\frac
14\sum_{\al>0}\langle\al,\al\rangle+\frac
14\sum_{(\al,\beta)\in R^2_\circ} c_\al c_\beta\langle
\al,\beta\rangle t_{s_\al}t_{s_\beta}\otimes 1\\
&-\frac 14\sum_{(\al,\beta)\in R^2_\circ}c_\al
c_\beta{t_{s_\al}}{t_{s_\beta}}\otimes \sum_{i\neq
  j}((\om_i,\al)(\om_j,\beta))\om_i\om_j.
  \end{align*}
  Using \eqref{innprod} and the definition of $f_\al$ in \eqref{e:fal}, we get
  \begin{align*}
\caD^2 &=-\Omega\otimes 1+\frac
14\sum_{\al>0}\langle\al,\al\rangle+\frac
14\sum_{(\al,\beta)\in R^2_\circ} c_\al c_\beta\langle
\al,\beta\rangle {t_{s_\al}}{t_{s_\beta}}\otimes 1\\
&\qquad\qquad\qquad\qquad\qquad\quad \ -\frac 14\sum_{(\al,\beta)\in R^2_\circ}c_\al
c_\beta t_{s_\al}t_{s_\beta}\otimes (|\al||\beta| f_\al
f_\beta+\langle\al,\beta\rangle)\\
&=-\Omega\otimes 1+\frac
14\sum_{\al>0}\langle\al,\al\rangle+\frac
14\sum_{(\al,\beta)\in R^2_\circ} c_\al c_\beta|\al||\beta|t_{p(f_\al)}
t_{p(f_\beta)}\otimes f_\al f_\beta.
\end{align*}
The theorem follows.
\end{proof}

\begin{corollary}\label{c:diracineq}
In the setting of Proposition \ref{p:adj}, assume further that $X$ is irreducible and unitary
with central character $\chi_\nu$ with $\nu \in V$ (as in Definition \ref{d:cc}).
Let
$(\wti\sigma,\wti U)$ be an irreducible representation of $\wti W$
such that $\Hom_{\wti W}(\wti U, X \otimes S) \neq 0$.
Then
\begin{equation}\label{eq:diracineq}
\langle\nu,\nu\rangle\le c(\wti\sigma)
\end{equation}
where
\begin{equation}\label{eq:sigmaWtilde}
c(\wti\sigma)
=\frac
14\sum_{\al>0}c_\al^2\langle\al,\al\rangle+\frac 14\sum_{\substack{\al>0,\beta>0,\al\neq\beta\\s_\al(\beta)<0}}
c_\al c_\beta |\al||\beta|\frac{\tr_{\wti\sigma}(f_\al f_\beta)}{\tr_{\wti\sigma}(1)},
\end{equation}   
is the scalar by which $\Omega_{\wti W}$ acts in $\wti U$ 
and $\tr_{\wti \sigma}$ denotes the character of $\wti \sigma$.
\end{corollary}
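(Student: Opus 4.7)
The strategy is to combine Theorem \ref{t:dirac}, Lemma \ref{scalar}, and Proposition \ref{p:dcriterion}, restricted to a single $\wti W$-isotypic component of $X \otimes S$ with respect to the diagonal action via $\rho$. Applying $\pi \otimes \gamma$ to the identity $\caD^2 = -\Omega \otimes 1 + \rho(\Omega_{\wti W})$ of Theorem \ref{t:dirac} gives
\[
D^2 = -\pi(\Omega) \otimes \mathrm{Id}_S + (\pi \otimes \gamma)\bigl(\rho(\Omega_{\wti W})\bigr)
\]
as operators on $X \otimes S$. Lemma \ref{scalar} together with the irreducibility of $X$ with central character $\chi_\nu$ shows $\pi(\Omega) = \langle \nu, \nu \rangle \mathrm{Id}_X$, so the first summand reduces to the scalar $-\langle \nu, \nu \rangle$.

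For the second summand I would invoke the centrality of $\Omega_{\wti W}$ in $\bC[\wti W]$ (noted just before Theorem \ref{t:dirac}): by Schur's lemma $\Omega_{\wti W}$ acts on the irreducible $\wti W$-representation $\wti U$ by the scalar $\tr_{\wti\sigma}(\Omega_{\wti W})/\dim \wti U$. Computing this trace directly from \eqref{omWtilde}, separating the diagonal pairs $(\alpha, \alpha)$ (where $f_\alpha^2$ is the nontrivial central element of $\wti W$) from the off-diagonal ones, produces exactly the expression $c(\wti\sigma)$ of \eqref{eq:sigmaWtilde}. Because $\rho$ realizes $\wti W$ diagonally inside $\bH \otimes C(V^\vee)$, the operator $(\pi \otimes \gamma)(\rho(\Omega_{\wti W}))$ then acts on each $\wti\sigma$-isotypic subspace of $X \otimes S$ by this same scalar $c(\wti\sigma)$.

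Combining both pieces, on any nonzero vector $v$ in the $\wti\sigma$-isotypic component (which exists by the hypothesis $\Hom_{\wti W}(\wti U, X \otimes S) \neq 0$) one obtains
\[
D^2 v = \bigl(-\langle \nu, \nu \rangle + c(\wti\sigma)\bigr) v.
\]
Proposition \ref{p:dcriterion} now forces $(D^2 v, v)_{X \otimes S} \geq 0$, and positive-definiteness of the Hermitian form on $X \otimes S$ (which follows from the assumed unitarity of $X$ together with the chosen positive form on $S$) gives $(v, v)_{X \otimes S} > 0$, so $-\langle \nu, \nu \rangle + c(\wti\sigma) \geq 0$, which is exactly \eqref{eq:diracineq}. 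The one step deserving genuine attention is the explicit identification of the central scalar of $\Omega_{\wti W}$ on $\wti U$ with the character formula \eqref{eq:sigmaWtilde} — straightforward trace bookkeeping, but the diagonal $\alpha = \beta$ contribution and the sign arising from $f_\alpha^2$ must be tracked correctly. Everything else is a one-line assembly of Theorem \ref{t:dirac} with the Dirac inequality.
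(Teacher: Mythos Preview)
Your proposal is correct and follows essentially the same route as the paper's proof: apply Proposition~\ref{p:dcriterion} to a vector in the $\wti\sigma$-isotypic component of $X\otimes S$, evaluate $D^2$ there via Theorem~\ref{t:dirac} and Lemma~\ref{scalar}, and read off the inequality. The only difference is that you spell out the Schur's lemma computation identifying the scalar $c(\wti\sigma)$ with the character expression in \eqref{eq:sigmaWtilde}, which the paper simply asserts.
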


\begin{proof}
The corollary follows by applying Proposition \ref{p:dcriterion}
to a vector $x \otimes s$ in the $\wti \sigma$ isotypic component
of $X \otimes S$, and then using
the formula for $D^2 = (\pi \otimes \gamma)(\caD^2)$ from Theorem \ref{t:dirac}
and the formula for $\pi(\Omega)$ from Lemma \ref{scalar}.
\end{proof}

\section{dirac cohomology and vogan's conjecture}
\label{s:vogan}
Suppose $(\pi, X)$ is an irreducible $\bH$ module with central character 
$\chi_\nu$.  By Lemma \ref{l:winvdirac}, the kernel of the Dirac operator on $X \otimes S$ is 
invariant under $\wti W$.  Suppose $\ker(D)$ is nonzero and that
$\wti \sigma$ is an irreducible representation of $\wti W$ appearing in $\ker(D)$.  
Then in the notation of Corollary \ref{c:diracineq}, Theorem \ref{t:dirac} and Lemma
\ref{scalar} imply that
\[
\langle \nu, \nu \rangle = c(\wti\sigma).
\]
In particular, the length of $\nu$ is determined by the $\wti W$
structure of $\ker(D)$.  Theorem \ref{t:vogan} below 
says that $\chi_\nu$ itself is determined by this information.

In this section (for the reasons mentioned in the introduction), we
fix a crystallographic root system $\Phi$ and set the parameter function
$c$ in Definition \ref{d:graded} to be identically 1,
i.e.~$c_\al=1$ for all $\al\in
R.$

\subsection{Geometry 
of irreducible representations
of $\wti W$.}
\label{ss:ciubo}
Let $\fg$ denote the complex semisimple Lie algebra corresponding to $\Phi$.
In particular, $\frg$ has a Cartan subalgebra $\frh$ such that
$\frh \simeq V$ canonically. 
Write $\caN$ for the nilpotent cone in $\fg$. 
Let $G$ denote the adjoint group $\mathrm{Ad}(\frg)$ acting by the adjoint
action on $\caN$. 

Given $e \in \caN$, let $\{e,  h,f\} \subset \frg$ denote an
$\mathfrak{s}
\mathfrak{l}_2$ triple with $h \in \frh$ semisimple.
Set
\begin{equation}
\label{e:nu}
\nu_e = \frac 12 h\in \frh \simeq V.
\end{equation}
The element $\nu_e$ depends on the choices involved. But its $W$-orbit
(and in particular $\langle \nu_e, \nu_e \rangle$ and the central character
$\chi_\nu$ of Definition \ref{d:cc}) are well-defined 
independent of the $G$ orbit of $e$.

Let 
\begin{equation}
\label{e:Nsol}
\caN_\sol =
\left  \{ e \in \caN \; | \; \text{ the centralizer of $e$ in $\frg$
is a solvable Lie algebra} \right \}.
\end{equation}
Then $G$ also acts on $\caN_\sol$.

Next
let  $A(e)$ denote the component group of the centralizer
of $e \in \caN$ in $G$.
To each $e \in \caN$,
Springer has defined a graded representation of $W \times A(e)$
(depending only on the $G$ orbit of $e$)
on the total cohomology $H^\bullet(\CB^e)$ of the Springer fiber
over $e$.  Set $d(e) = 2\dim(\CB^e)$, and define
\begin{equation}
\label{e:springer}
\sigma_{e,\phi} = \left(H^{d(e)}(\CB^e)\right)^\phi \in \Irr(W) \cup \{0\},
\end{equation}
the $\phi$ invariants in the top degree.  (In
general, given a finite group $H$, we write $\Irr(H)$ for the set of
equivalence classes of its irreducible representations.)
Let $\Irr_0(A(e)) \subset
\Irr(A(e))$ denote the subset of representations of ``Springer type'', i.e.~those $\phi$ such that $\sigma_{e,\phi} \neq 0$.

Finally, let $\Irr_\gen(\wti W) \subset \Irr(\wti W)$ denote the subset of
genuine representations.

\begin{theorem}[{\cite{ciubo:weyl}}]
\label{t:class}
Recall the notation of \eqref{e:nu},  \eqref{e:Nsol}, and \eqref{e:springer}.
Then
there is a surjective map
\begin{equation}\label{e:class}
\Psi:~\Irr_\gen(\wti W)
\longrightarrow 
G \bs \caN_\sol
\end{equation}
with the
following properties:
\begin{enumerate}
\item If $\Psi(\wti\sigma)=G\cdot e$, then 
\begin{equation}
c(\wti\sigma)=\langle \nu_e,\nu_e\rangle,
\end{equation}
where $c(\wti \sigma)$ is defined in \eqref{eq:sigmaWtilde}.

\item Fix a spin module $(\gamma,S)$ for $C(V^\vee)$.  
\begin{enumerate}
\item[(a)]
If $e \in \caN_\sol$ and $\phi \in \Irr_0(A(e))$, then there exists
$\wti \sigma \in \Psi^{-1}(G\cdot e)$ so that
\[
\Hom_W\left (\sigma_{e,\phi}, \wti \sigma \otimes S \right) \neq 0.
\]

\item[(b)]
If $\Psi(\wti \sigma) = G\cdot e$, then there exists
$\phi \in \Irr_0(A(e))$ such that
\[
\Hom_W\left (\sigma_{e,\phi}, \wti \sigma \otimes S \right) \neq 0.
\]
\end{enumerate}
\end{enumerate}
\end{theorem}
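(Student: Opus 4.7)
The plan is to define $\Psi$ via a numerical matching of the scalar $c(\wti\sigma)$ with orbit invariants, and then use Springer theory together with Lusztig's classification of tempered $\bH$-modules to lift this to an honest map into $G\bs\caN_\sol$ satisfying the compatibility property (2).

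First I would compute $c(\wti\sigma)$ explicitly for each $\wti\sigma \in \Irr_\gen(\wti W)$ from the character formula \eqref{eq:sigmaWtilde}, using the Schur--Morris--Reade classification of genuine irreducibles in classical types and explicit tables in exceptional types. In parallel I would tabulate $\langle\nu_e,\nu_e\rangle$ for every $G\cdot e \in G\bs\caN$ from weighted Dynkin data. The algebraic bridge between the two lists is Lemma \ref{scalar} applied to a tempered $\bH$-module $X_{e,\phi}$ with real central character $\chi_{\nu_e}$ and Springer lowest $W$-type $\sigma_{e,\phi}$ (these exist by Kazhdan--Lusztig/Lusztig): combined with Corollary \ref{c:criterion} this forces the Casimir scalar $c(\sigma_{e,\phi}) = \langle\nu_e,\nu_e\rangle$. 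Passing to the Dirac side via Theorem \ref{t:dirac} and Corollary \ref{c:diracineq} applied to $X_{e,\phi}\otimes S$, every $\wti\sigma$ that attains equality in \eqref{eq:diracineq} (equivalently, appears in $\ker D$) must satisfy $c(\wti\sigma) = \langle\nu_e,\nu_e\rangle$.

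Second, I would define $\Psi(\wti\sigma) := G\cdot e$ by selecting an $e \in \caN_\sol$ so that some Springer representation $\sigma_{e,\phi}$, $\phi\in\Irr_0(A(e))$, occurs as a $W$-constituent of $\wti\sigma\otimes S$, with $G\cdot e$ of maximal dimension among all such choices. Surjectivity and property (2)(a) then follow from Lusztig's geometric parametrization of tempered $\bH$-modules with real central character by pairs $(e,\phi)$ with $e\in\caN_\sol$ and $\phi\in\Irr_0(A(e))$: given such a pair, decomposing $X_{e,\phi}\otimes S$ as a $\wti W$-representation and extracting an irreducible $\wti\sigma$ in its Dirac kernel (or, equivalently, in the $\wti W$-isotypic piece saturating \eqref{eq:diracineq}) yields $\wti\sigma\in\Psi^{-1}(G\cdot e)$ containing $\sigma_{e,\phi}$ in $\wti\sigma\otimes S$ by construction. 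Property (2)(b) is then immediate from the definition of $\Psi$, and property (1) reduces to the equality $c(\wti\sigma) = \langle\nu_e,\nu_e\rangle$ established above. Finally, the confinement of the image to $\caN_\sol$ (rather than all of $\caN$) follows because only real infinitesimal characters $\chi_{\nu_e}$ with $e\in\caN_\sol$ support tempered modules, and the Dirac inequality excludes larger orbits on length grounds.

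The main obstacle is the well-definedness of $\Psi$: a priori the decomposition of $\wti\sigma\otimes S$ could contain Springer constituents attached to distinct orbits with the same squared weighted Dynkin length $\langle\nu_e,\nu_e\rangle$, so the numerical invariant $c(\wti\sigma)$ alone is insufficient to pin down a unique orbit. Resolving this requires a finer equivalence, most naturally the image of the virtual character $\wti\sigma\otimes S$ in the Grothendieck group of $W$ modulo the ideal generated by representations properly induced from parabolic subgroups of $W$ (this ideal being exactly what Springer representations detect). Proving that this refined invariant picks out a single $G$-orbit, and then verifying the full numerical and combinatorial matching in every Dynkin type, is the genuinely hard case-by-case core of the argument, relying on the explicit knowledge of $\Irr_\gen(\wti W)$ and of $G\bs\caN_\sol$.
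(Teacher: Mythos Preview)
The paper does not prove this theorem; it is quoted from \cite{ciubo:weyl}, so there is no in-paper proof to compare against. Your outline is roughly in the spirit of that reference---an explicit case-by-case matching of $\Irr_\gen(\wti W)$ with nilpotent orbit data---but it contains a concrete error and a structural gap.

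The error is your last sentence: ``only real infinitesimal characters $\chi_{\nu_e}$ with $e\in\caN_\sol$ support tempered modules.'' This is false. Tempered $\bH$-modules $X_t(e,\phi)$ exist for \emph{every} $e\in\caN$ and $\phi\in\Irr_0(A(e))$; see Remark~\ref{r:unip}, which uses them without any solvability hypothesis. That the image of $\Psi$ lands in $\caN_\sol$ rather than all of $\caN$ is a nontrivial fact about spin representations of Weyl groups---roughly, that the scalars $c(\wti\sigma)$ match only those $\langle\nu_e,\nu_e\rangle$ with $e\in\caN_\sol$---and it is established in \cite{ciubo:weyl} by direct computation, not as a byproduct of tempered-module theory.

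The structural gap is that your argument for property~(1) never closes. Corollary~\ref{c:criterion} yields only the inequality $\langle\nu_e,\nu_e\rangle\le c(\sigma_{e,\phi})$, and Corollary~\ref{c:diracineq} only $\langle\nu_e,\nu_e\rangle\le c(\wti\sigma)$ for $\wti\sigma$ occurring in $X_t(e,\phi)\otimes S$. You then observe that a $\wti\sigma$ lying in $\ker D$ would give equality, but your definition of $\Psi(\wti\sigma)$ (via the maximal orbit whose Springer representation embeds in $\wti\sigma\otimes S$) neither produces such a $\wti\sigma$ nor guarantees $\ker D\neq 0$ in advance. You rightly flag the ``hard case-by-case core''; in \cite{ciubo:weyl} that core \emph{is} the proof of (1) and of the confinement to $\caN_\sol$, and the $\bH$-module scaffolding you propose does not bypass it.
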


Together with Corollary \ref{c:diracineq}, 
we immediately obtain the following.

\begin{corollary}
\label{c:unip}
Suppose $(\pi,X)$ is an irreducible unitary $\bH$-module with central
character $\chi_\nu$ with $\nu \in V$ (as in Definition \ref{d:cc}).  Fix a spin module $(\gamma,S)$
for $C(V^\vee)$.
\begin{enumerate}
\item[(a)]
Let $(\wti\sigma,\wti U)$ be a representation of $\wti W$
such that $\Hom_{\wti W}\left ( \wti U, X\otimes  S\right )\neq 0$. 
 In the notation of Theorem \ref{t:class}, write 
$\Psi(\wti \sigma) = G\cdot e$.
Then
\begin{equation}\label{eq:diracineq2}
\langle\nu,\nu\rangle\le \langle\nu_e,\nu_e\rangle.
\end{equation}

\item[(b)]
Suppose $e\in \caN_\sol$ and $\phi \in \Irr_0(A(e))$ such that
$\Hom_W(\sigma_{(e,\phi)}, X) \neq 0$.
Then
\begin{equation}
\langle\nu,\nu\rangle\le \langle\nu_e,\nu_e\rangle.
\end{equation}
\end{enumerate}
\end{corollary}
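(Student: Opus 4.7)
The plan is to prove (a) by direct application of the results already in place, and then reduce (b) to (a) by manufacturing a suitable $\wti W$-constituent of $X \otimes S$.

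\emph{Part (a).} I would apply Corollary \ref{c:diracineq} to $\wti U = \wti\sigma$ (which is implicitly irreducible, since $\Psi$ in Theorem \ref{t:class} is defined on $\Irr_\gen(\wti W)$). Its hypotheses are immediate from those given, yielding $\langle \nu, \nu \rangle \leq c(\wti\sigma)$. Since $\Psi(\wti\sigma) = G \cdot e$, Theorem \ref{t:class}(1) identifies $c(\wti\sigma) = \langle \nu_e, \nu_e \rangle$, which completes part (a).

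\emph{Part (b).} The strategy is to produce some $\wti\tau \in \Psi^{-1}(G\cdot e)$ occurring as a $\wti W$-constituent of $X \otimes S$, so that part (a) applies. Theorem \ref{t:class}(2)(a) supplies a $\wti\sigma \in \Psi^{-1}(G\cdot e)$ with $\sigma_{e,\phi}$ a $W$-constituent of $\wti\sigma \otimes S$. Write $\wti\sigma^\vee$ for the $\bC$-linear dual. Since every irreducible $W$-representation is defined over $\bR$ and hence self-dual, dualizing the previous containment gives that $\sigma_{e,\phi}$ is also a $W$-constituent of $\wti\sigma^\vee \otimes S^\vee$. Combined with the hypothesis $\Hom_W(\sigma_{e,\phi}, X) \neq 0$, this produces $\Hom_W(\wti\sigma^\vee \otimes S^\vee, X) \neq 0$. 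The Hom--tensor adjunction
\[
\Hom_{\wti W}(\wti\sigma^\vee,\, X \otimes S) \;\cong\; \Hom_W(\wti\sigma^\vee \otimes S^\vee,\, X),
\]
valid because $\wti\sigma^\vee \otimes S^\vee$, as a tensor product of two genuine $\wti W$-representations, is non-genuine and therefore descends to a $W$-representation, and $X$ is non-genuine by pull-back from $\bH$, then yields $\Hom_{\wti W}(\wti\sigma^\vee, X \otimes S) \neq 0$.

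Applying part (a) to $\wti\sigma^\vee$ gives $\langle \nu, \nu \rangle \leq c(\wti\sigma^\vee)$, so it remains to verify $c(\wti\sigma^\vee) = c(\wti\sigma) = \langle \nu_e, \nu_e \rangle$. This reduces to the identity
\[
\tr_{\wti\sigma^\vee}(f_\alpha f_\beta) = \tr_{\wti\sigma}\bigl((f_\alpha f_\beta)^{-1}\bigr) = \tr_{\wti\sigma}(f_\beta f_\alpha) = \tr_{\wti\sigma}(f_\alpha f_\beta),
\]
where the middle equality uses $f_\alpha^{-1} = f_\alpha^t = -f_\alpha$ in the Clifford algebra, and the last equality uses cyclicity of trace. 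Inserting this into formula \eqref{eq:sigmaWtilde} yields $c(\wti\sigma^\vee) = c(\wti\sigma)$.

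The main technical care required is the genuine/non-genuine bookkeeping in the adjunction argument; a secondary nuisance is the existence of two inequivalent spin modules when $\dim V$ is odd, in which case $S$ and $S^\vee$ need not be isomorphic. The latter is handled by observing that $S \otimes \sgn$ realizes the other spin module up to isomorphism and that formula \eqref{eq:sigmaWtilde} is insensitive to this twist. I do not expect a more substantive obstacle.
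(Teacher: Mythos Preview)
Your proof is correct and supplies the details the paper omits: the paper simply records that the corollary follows immediately from Corollary~\ref{c:diracineq} together with Theorem~\ref{t:class}, so for part~(a) your argument is exactly the intended one. For part~(b) your dualization of $\wti\sigma$ and the verification $c(\wti\sigma^\vee)=c(\wti\sigma)$ are fine; a marginally shorter variant is to run the adjunction the other way, placing $\wti\sigma$ itself inside $\sigma_{e,\phi}\otimes S^\vee\subset X\otimes S^\vee$, and then note (as you already do for the odd-dimensional case) that Corollary~\ref{c:diracineq} and the scalar $c(\wti\sigma)$ are insensitive to the choice of spin module.
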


\begin{remark} 
\label{r:unip}
The bounds in Corollary \ref{c:unip} represents the best possible in
the sense that there exist $X$ such that the inequalities are actually
equalities.  For example, consider part (b) of the corollary, fix
$\phi \in \Irr_0(A(e))$, and let $X_{t}(e,\phi)$ be the unique
tempered representation of $\bH$ parametrized by $(e,\phi)$
in the Kazhdan-Lusztig classification (\cite{KL}).  Thus
$X_{t}(e,\phi)$ is an irreducible unitary representation with central
character $\chi_{\nu_e}$ and, as a representation of $W$,
\[
X_{t}(e,\phi) \simeq H^\bullet(\C B_e)^\phi.
\]
In particular, $\sigma_{e,\phi}$ occurs
with multiplicity one in $X_{t}(e,\phi)$ (in the top degree).
Thus the inequality in Corollary \ref{c:unip}(b) applied to $X_t(e,\phi)$ is an equality.

The representations $X_t(e,\phi)$ 
will play an important role in 
our proof of Theorem \ref{t:vogan}.
\end{remark}

\subsection{Applications to unitary representations}\label{s:unit}
Recall that there exists a unique open dense $G$-orbit in $\C N$, the
regular orbit; let $\{e_\mr,h_\mr,f_\mr\}$ be a corresponding
$\mathfrak{sl}_2$ with $h_\mr \in \frh$, and set $\nu_\mr = \frac12 h_\mr$.
If $\fg$ is simple, then there exists a unique open
dense $G$-orbit in the complement of $G\cdot e_\mr$ in $\C N$
called the subregular
orbit. Let $\{e_\msr,h_\msr,f_\msr\}$ be an $\mathfrak{sl}_2$ triple
for the subregular orbit with $h_\msr \in \frh$, and set 
$\nu_\msr = \frac 12 h_\msr$.

The tempered module $X_t(e_\mr,\triv)$ is the Steinberg
discrete series, and we have $X_t(e_\mr,\triv)|_W=\sgn.$ When $\fg$
is simple, the tempered module $X_t(e_\msr,\triv)$ has dimension $\dim
V+1$, and $X_t(e_\msr,\triv)|_W=\sgn\oplus\refl,$ where $\refl$ is the
reflection $W$-type.

Now we can state certain bounds for unitary $\bH$-modules.

\begin{corollary}\label{c:unit} Let $(\pi,X)$ be an irreducible unitary $\bH$-module with central
character $\chi_\nu$ with $\nu \in V$ (as in Definition
\ref{d:cc}). Then, we have:
\begin{enumerate}
\item $\langle\nu,\nu\rangle\le\langle\nu_\mr,\nu_\mr\rangle$;
\item if $\fg$ is simple of rank at least $2$, and $X$ is \emph{not} the trivial or the
  Steinberg $\bH$-module, then $\langle\nu,\nu\rangle\le\langle\nu_\msr,\nu_\msr\rangle$.
\end{enumerate}
\end{corollary}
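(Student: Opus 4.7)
The plan is to derive both parts directly from Corollary \ref{c:unip} together with standard facts about nilpotent orbits in simple Lie algebras. For part (1), I would fix a spin module $(\gamma,S)$ and pick any irreducible $\wti W$-constituent $\wti\sigma$ of $X\otimes S$. Corollary \ref{c:unip}(a) yields $\langle\nu,\nu\rangle\le\langle\nu_e,\nu_e\rangle$ where $\Psi(\wti\sigma)=G\cdot e$. Since the regular nilpotent orbit is the unique orbit of maximal dimension in a semisimple Lie algebra and the function $e\mapsto\langle\nu_e,\nu_e\rangle$ is monotone with respect to the closure order on nilpotent orbits, we get $\langle\nu_e,\nu_e\rangle\le\langle\nu_\mr,\nu_\mr\rangle$, and (1) follows.

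For part (2), I would assume $\langle\nu,\nu\rangle>\langle\nu_\msr,\nu_\msr\rangle$ and argue that $X$ must be either trivial or Steinberg. Applying Corollary \ref{c:unip}(a) to every $\wti W$-constituent of $X\otimes S$, combined with the (type-by-type) fact that for simple $\fg$ of rank at least $2$ the regular is the unique orbit in $\caN_\sol$ with $\langle\nu_e,\nu_e\rangle>\langle\nu_\msr,\nu_\msr\rangle$, forces $\Psi(\wti\sigma)=G\cdot e_\mr$ for every $\wti\sigma$ appearing in $X\otimes S$. Since $G=\mathrm{Ad}(\fg)$ and $e_\mr$ is regular, $A(e_\mr)$ is trivial, so the only Springer representation attached to $e_\mr$ is $\sigma_{e_\mr,\triv}=\sgn$. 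By Theorem \ref{t:class}(2)(b) each such $\wti\sigma$ satisfies $\Hom_W(\sgn,\wti\sigma\otimes S)\neq 0$, pinning $\wti\sigma$ down to the spin $\wti W$-module $S$ up to a $\sgn$ twist.

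To extract the $W$-structure of $X$ itself I would tensor back with $S^\ast$ and use the Clifford-algebra identity $S\otimes S^\ast\cong\bigwedge^\bullet V^\vee$ as $W$-modules: the constraint on the $\wti W$-content of $X\otimes S$ then forces $X|_W$ to decompose into $\triv$- and $\sgn$-isotypic components only. The defining relation \eqref{hecke} pins down the $S(V^\vee)$-action on each such component once the $W$-character is fixed, and since $Z(\bH)=S(V^\vee)^W$ acts by a scalar on irreducible $X$, this shows $X$ is a one-dimensional $\bH$-module with $W$-action $\triv$ or $\sgn$ and central character $\chi_{\nu_\mr}$ or $\chi_{-\nu_\mr}$; these are precisely the trivial and Steinberg modules.

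The main obstacle is the Clifford-theoretic inversion in the third paragraph: cleanly translating the $\wti W$-constraint on $X\otimes S$ back into a $W$-constraint on $X$ requires care with parity issues in $\dim V$ (there are two inequivalent spin modules when $\dim V$ is odd) and with whether $\sgn\otimes S\cong S$ when $\dim V$ is even. The type-by-type verification that the regular is the unique orbit with $\langle\nu_e,\nu_e\rangle>\langle\nu_\msr,\nu_\msr\rangle$ is standard but also needed, and reduces to inspecting the weighted Dynkin diagrams of the second-largest nilpotent orbits in each simple type.
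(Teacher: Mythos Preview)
For part (1) your argument is the same as the paper's. For part (2) you argue the contrapositive of the paper's direct proof, and the two hinge on the same key lemma: a $W$-type $\sigma\notin\{\triv,\sgn\}$ cannot have $\sigma\otimes S$ built entirely from spin $\wti W$-modules. The paper proves this by a short character computation---if every constituent of $\sigma\otimes S$ were spin, then comparing traces gives $\tr_\sigma(s_\al s_\beta)=\tr_\sigma(1)$ for all nonorthogonal roots $\al,\beta$ (using $\tr_S(f_\al f_\beta)\neq 0$ in that case), and a rank-two check excludes this unless $\sigma\in\{\triv,\sgn\}$. Your Clifford route via $S\otimes S^\ast\cong\bigwedge^\bullet V^\vee$ is more roundabout: carried through, it yields $\tr_\sigma(w)=\dim\sigma$ whenever $\det(I+w|_{V^\vee})\neq 0$, which for $w=s_\al s_\beta$ with $\langle\al,\beta\rangle\neq 0$ recovers exactly the paper's condition, and you would still need the same rank-two verification that you do not mention. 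So the detour buys nothing, and the parity bookkeeping you flag is a genuine nuisance that the paper's direct character argument sidesteps entirely.

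Your final step has a gap. You claim that \eqref{hecke} pins down the $S(V^\vee)$-action on each $W$-isotypic component of $X$, but $S(V^\vee)$ does not preserve $W$-isotypic components, so this fails as stated when both $\triv$ and $\sgn$ occur. What is actually true (and what the paper also uses without comment when it asserts that a non-trivial, non-Steinberg $X$ must contain some $\sigma\neq\triv,\sgn$) is that in rank $\ge 2$ any off-diagonal piece $b:V^\vee\to\bC$ of the $V^\vee$-action between the $\triv$- and $\sgn$-isotypics is forced by \eqref{hecke} to satisfy $b\circ s_\al=-b$ for every simple $\al$, hence $b=0$; so such an irreducible $X$ is one-dimensional. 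You should supply this argument explicitly rather than the component-wise claim.
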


\begin{proof} The first claim follows from Corollary \ref{c:unip}(1)
  since $\langle\nu_e,\nu_e\rangle\le\langle\nu_\mr,\nu_\mr\rangle$, for every $e\in\C N.$

 For the second claim,
  assume $X$ is not the trivial or the Steinberg $\bH$-module. Then
  $X$ contains a $W$-type $\sigma$ such that
  $\sigma\neq\triv,\sgn$. We claim that $\sigma\otimes S$, where $S$
  is a fixed irreducible spin module, contains a
  $\wti W$-type $\wti\sigma$ which is not a spin module. If this
were not the case, assuming for simplicity that $\dim V$ is even, we
would find that $\sigma\otimes S=S\oplus\dots\oplus S$, where there are
$\tr_\sigma(1)$ copies of $S$ in the right hand side. In particular,
we would get $\tr_\sigma(s_\al s_\beta)\tr_S(f_\al
f_\beta)=\tr_\sigma(1)\tr_S(f_\al f_\beta)$. Notice that this formula
is true when $\dim V$ is odd too, since the two inequivalent
spin modules in this case have characters which have the same
value on  $f_\al f_\beta.$ If $\langle\al,\beta\rangle\neq 0$, then
we know that $\tr_S(f_\al f_\beta)\neq 0$ (\cite{Mo}). This means that
$\tr_\sigma(s_\al s_\beta)=\tr_\sigma(1)$, for all non-orthogonal
roots $\al,\beta$. One verifies directly that, when $\Phi$ is simple
of rank two, this relation does not hold. Thus we obtain a contradiction.

Returning to the second claim in the corollary, let $\wti\sigma$ be a
$\wti W$-type appearing in 
$X\otimes S$ which is not a spin module. Let $e$ be a nonregular nilpotent element such that
$\Psi(\wti\sigma)=G\cdot e.$ Corollary \ref{c:unip} says that
$\langle\nu,\nu\rangle\le\langle\nu_e,\nu_e\rangle.$ To complete the
proof, recall that if $\fg$ is simple, the largest value for
$\langle\nu_e,\nu_e\rangle$, when $e$ is not a regular element, is
obtained when $e$ is a subregular nilpotent element.
\end{proof}

\begin{remark}
\label{r:t}
Standard considerations for reducibility of principal series
allow one to deduce a strengthened version  of Corollary \ref{c:unit}(1),
namely that $\nu$ is contained in the convex hull of the Weyl group orbit of $\nu_\mr$.
We also note that Corollary \ref{c:unit}(2) implies, in particular,
that the trivial $\bH$-module is isolated
in the unitary dual of $\bH$ for all simple root systems of rank at
least two.   (Sometimes this is called Kazhdan's Property T.)  
\end{remark}

\begin{remark}
\label{r:unit}
There is another, subtler application of Corollary
\ref{c:unip}(2). Assume that $X(s,e,\psi)$ is an irreducible $\bH$-module
parametrized in the Kazhdan-Lusztig classification by the
$G$-conjugacy class of $\{s,e,\psi\}$, where $s\in \fh_0\cong V_0$, $[s,e]=e$,
$\psi\in\Irr_0 A(s,e)$.  The group $A(s,e)$ embeds canonically in
$A(e)$.  Let $\Irr_0 A(s,e)$ denote the subset of elements
in $\Irr A(s,e)$ which
appear in the restriction of an element of $\Irr_0 A(e)$. 
The module $X(s,e,\psi)$
is characterized by the property that it contains every $W$-type
$\sigma_{(e,\phi)}$, $\phi\in \Irr_0 A(e)$ such that
$\Hom_{A(s,e)}(\psi,\phi)\neq 0$. 

Let $\{e,h,f\}$ be an $\mathfrak{sl}_2$
triple containing $e$. One may choose $s$ such that $s=\frac 12
h+s_z$, where $s_z\in V_0$ centralizes $\{e,h,f\}$ and $s_z$ is orthogonal to
$h$ with respect to $\langle~,~\rangle$. When $s_z=0$, we have
$A(s,e)=A(h,e)=A(e)$, and 
$X(\frac 12 h,e,\psi)$ is the tempered module $X_t(e,\phi)$ ($\phi=\psi$) from
before. 

Corollary \ref{c:unip}(2) implies that if $e\in\C N_{\mathsf{sol}}$,
then $X(s,e,\psi)$ is unitary \emph{if and only if} $X(s,e,\psi)$ is tempered.
\end{remark}

\subsection{Dirac cohomology and Vogan's Conjecture}
\label{ss:diraccoh}

As discussed above, Vogan's Conjecture suggests that
for an irreducible unitary representation $X$, the $\wti W$ structure of
the kernel of the Dirac operator $D$ should determine the
infinitesimal character of $X$.  
This is certainly 
false for nonunitary representations.
But since it is difficult to imagine a proof of an algebraic statement which
applies only to unitary representations,
we use an idea of Vogan and 
enlarge the class of irreducible unitary representations for which $\ker(D)$
is nonzero to the class of representations with nonzero Dirac
cohomology in the following sense.

\begin{definition}
\label{d:dcoh}
In the setting of Definition \ref{d:dirac}, define
\begin{equation}
H^D(X):=\ker D\big / \left(\ker D\cap \Im D\right )
\end{equation}
and call it the Dirac cohomology of $X$.
(For example, if $X$ is unitary, Lemma \ref{l:adj} implies
$\ker(D) \cap \Im (D) = 0$, and so $H^D(X) = \ker (D)$.)
\end{definition}

Our main result is as follows.

\begin{theorem}
\label{t:vogan}
Let $\bH$ be the graded affine Hecke algebra attached to
a crystallographic root system $\Phi$ and constant parameter
function $c \equiv 1$ (Definition \ref{d:graded}). 
Suppose $(\pi,X)$ is an $\bH$ module with central character $\chi_\nu$
with $\nu \in V$ (as in Definition \ref{d:cc}).
In the setting of Definition \ref{d:dcoh}, suppose 
that $H^D(X) \neq 0$.
Let $(\wti\sigma,\wti U)$ be a 
representation of $\wti W$ such that $\Hom_{\wti W}(\wti U, H^D(X) ) \neq 0$.
Using Theorem \ref{t:class}, write $\Psi(\wti \sigma) = G\cdot e$.
Then
\[
\chi_\nu = \chi_{\nu_e}.
\]
\end{theorem}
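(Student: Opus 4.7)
The plan is to mimic the approach of Huang and Pand\v zi\'c \cite{HP}, which reduces Vogan's Conjecture in the real case to an algebraic identity for the Dirac operator modulo a suitable ideal. The heart of the argument will be the construction of a linear map
\[
\zeta:\; Z(\bH) = S(V^\vee)^W \longrightarrow \rho(\bC[\wti W]^{\wti W})
\]
satisfying
\[
z \otimes 1 \;\equiv\; \zeta(z) \pmod{\CI},
\]
where $\CI$ denotes the two-sided ideal $\caD(\bH\otimes C(V^\vee)) + (\bH\otimes C(V^\vee))\caD$. The base case $z = \Omega$ is Theorem \ref{t:dirac}, which forces $\zeta(\Omega) = \rho(\Omega_{\wti W})$.

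Granting such a $\zeta$, the theorem follows in two steps. First, Lemma \ref{l:winvdirac} says $D$ commutes with $\rho(\wti W)$ up to $\sgn$, so both $\ker D$ and $\Im D$ are $\rho(\wti W)$-stable and $H^D(X)$ carries a $\wti W$-module structure. Since each $z\in Z(\bH)$ is central in $\bH$, the operator $z\otimes 1$ commutes exactly with $D$ and descends to $H^D(X)$ as multiplication by $\chi_\nu(z)$. On the other hand, writing $\zeta(z) = \rho(w)$ with $w\in \bC[\wti W]^{\wti W}$, the operator $\zeta(z)$ also preserves $\ker D$, and the congruence implies that for any $v\in \ker D$ one has $(z\otimes 1 - \zeta(z))v \in \Im D$; hence $\rho(w)$ acts on $H^D(X)$ by the same scalar $\chi_\nu(z)$ as $z\otimes 1$ does. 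But $w$ is central in $\bC[\wti W]$, so it acts by a scalar $\chi_{\wti\sigma}(z)$ on the $\wti\sigma$-isotypic component of $H^D(X)$ depending only on $\wti\sigma$, producing a ring homomorphism $\chi_{\wti\sigma}\colon Z(\bH)\to \bC$ with $\chi_\nu = \chi_{\wti\sigma}$. Second, to identify $\chi_{\wti\sigma}$ with $\chi_{\nu_e}$ when $\Psi(\wti\sigma) = G\cdot e$, I feed the tempered module $X_t(e,\phi)$ from Remark \ref{r:unip} into the same machine. By Theorem \ref{t:class}(2)(b) choose $\phi\in \Irr_0(A(e))$ with $\Hom_W(\sigma_{e,\phi},\wti\sigma\otimes S)\neq 0$; since $\sigma_{e,\phi}$ is a $W$-constituent of $X_t(e,\phi)$, standard duality produces an embedding of $\wti\sigma$ into $X_t(e,\phi)\otimes S$. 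Unitarity of $X_t(e,\phi)$, together with the equality $\langle \nu_e,\nu_e\rangle = c(\wti\sigma)$ from Theorem \ref{t:class}(1) and the formula for $\caD^2$ in Theorem \ref{t:dirac}, forces this copy of $\wti\sigma$ to lie in $\ker D = H^D(X_t(e,\phi))$. Applying the first step to $X_t(e,\phi)$ then yields $\chi_{\wti\sigma} = \chi_{\nu_e}$, as required.

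The principal obstacle is the construction of $\zeta$ — the Hecke-algebra analogue of the algebraic identity at the core of \cite{HP}. To attack it, filter $\bH \otimes C(V^\vee)$ by giving elements of $V^\vee$ degree one and elements of $W$ and of $C(V^\vee)$ degree zero; the relation \eqref{hecke} is compatible with this filtration, the associated graded algebra is $S(V^\vee) \otimes \bC[W] \otimes C(V^\vee)$, and the symbol of $\caD$ is $\sum_i \omega_i \otimes 1 \otimes \om^i$. The inductive goal is to show that for any $z \in S(V^\vee)^W$ of positive degree the element $z \otimes 1$ is congruent modulo $\CI$ to an element of strictly smaller filtration degree plus an element of $\rho(\bC[\wti W]^{\wti W})$. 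On the level of symbols this amounts to a Koszul-type acyclicity statement for multiplication by the symbol of $\caD$ on the $W$-harmonic complement of the positive-degree $W$-invariants in $S(V^\vee)\otimes C(V^\vee)$, in the spirit of the computations in \cite{HP}. Lifting the symbol calculation to a genuine congruence in $\bH\otimes C(V^\vee)$ and tracking the lower-order correction terms generated by \eqref{hecke} will occupy the bulk of Section \ref{s:proof}.
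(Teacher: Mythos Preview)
Your proposal is correct and follows essentially the same route as the paper: reduce Theorem~\ref{t:vogan} to the algebraic statement $z\otimes 1\equiv\rho(\zeta(z))$ modulo $\caD a+b\caD$ (Theorem~\ref{t:hp}), deduce $\chi_\nu=\chi_{\wti\sigma}$ from it exactly as you describe, and then identify $\chi_{\wti\sigma}$ with $\chi_{\nu_e}$ by running the tempered module $X_t(e,\phi)$ through the same machine (Proposition~\ref{p:hp} and Proposition~\ref{p:hpv}). The only organizational difference is that the paper packages the construction of $\zeta$ via a super-differential $d(a)=\caD a-(-1)^{|a|}a\caD$, tracks how it exchanges the $\triv$ and $\sgn$ conjugation-eigenspaces of $\rho(\wti W)$, and proves $\ker(d^{\triv})=\Im(d^{\sgn})\oplus\rho(\bC[\wti W]^{\wti W})$ (Theorem~\ref{t:hp2}) by passing to the associated graded and invoking Koszul acyclicity on $S(V^\vee)\otimes C(V^\vee)$ --- the same filtration/Koszul mechanism you outline, just with the $\wti W$-equivariance made explicit rather than handled at the end.
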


We roughly follow the outline of the proof in the real case
proposed by Vogan in \cite[Lecture 3]{v:notes}
and completed in \cite{HP}.

\begin{proposition}
\label{p:hp} 
Let $(\pi, X)$ be an irreducible $\bH$ module with central
character $\chi_\nu$ with $\nu \in V$ (as in Definition \ref{d:cc}).
In the setting of Definition \ref{d:dirac}, 
suppose $(\wti \sigma, \wti U)$ is an irreducible
representation of $\wti W$ such that
$\Hom_{\wti W} (\wti U, X \otimes S) \neq 0$.
Write $\Psi(\wti \sigma) = G\cdot e$ as in Theorem \ref{t:class}.
Assume further that
$\langle\nu,\nu\rangle=\langle\nu_e,\nu_e\rangle$. Then
\[
\Hom_{\wti W}\left (\wti U, H^D(X)\right) \neq 0.
\]
\end{proposition}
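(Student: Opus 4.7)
The strategy is to combine the $\caD^2$-formula from Theorem \ref{t:dirac} with the generalized eigenspace decomposition of $D^2$, and thereby reduce the statement to a non-vanishing assertion about a small explicit chain complex of $\wti W$-isotypic pieces.

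First, I would use Theorem \ref{t:dirac} together with Lemma \ref{scalar} to see that $D^2 = (\pi \otimes \gamma)(\caD^2)$ acts by the scalar $c(\wti\tau) - \langle \nu,\nu\rangle$ on the $\wti\tau$-isotypic component $V_{\wti\tau} \subseteq X \otimes S$, for every irreducible genuine $\wti\tau$; here $\Omega_{\wti W}$ acts by $c(\wti\tau)$ because it is central in $\bC[\wti W]$. Theorem \ref{t:class}(1) gives $c(\wti\sigma) = \langle \nu_e,\nu_e\rangle$, so the length-matching hypothesis forces $D^2$ to vanish on the nonzero subspace $V_{\wti\sigma}$. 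A short character computation using $\sgn(s_\alpha s_\beta) = +1$ also shows $c(\wti\sigma \otimes \sgn) = c(\wti\sigma)$, so $D^2$ vanishes on $V_{\wti\sigma \otimes \sgn}$ as well.

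Next, decompose $X \otimes S = V_0 \oplus V_+$ into generalized eigenspaces of $D^2$, where $V_0 = \ker D^2$. Squaring Lemma \ref{l:winvdirac} shows $D^2$ commutes with $\rho(\wti W)$, so this decomposition is $\wti W$-stable, and on $V_+$ both $D^2$ and $D$ are invertible. Therefore $\ker D \subseteq V_0$ and $\ker D \cap \Im D = D(V_0)$, so
\[
H^D(X) \; = \; \ker(D|_{V_0})\big/D(V_0),
\]
the cohomology of the finite-dimensional chain complex $(V_0, D|_{V_0})$ with $D^2 = 0$. Lemma \ref{l:winvdirac} further implies $D$ carries $V_{\wti\tau}$ into $V_{\wti\tau \otimes \sgn}$, so the complex decomposes $\wti W$-isotypically into $\sgn$-paired summands, and the $\wti\sigma$-isotypic of $H^D(X)$ is identified with
\[
(V_{\wti\sigma} \cap \ker D) \; \big/ \; D(V_{\wti\sigma \otimes \sgn}).
\]

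The principal obstacle is showing that this quotient is nonzero, which does \emph{not} follow from linear algebra alone: the super-Euler characteristic only yields $\dim H^D_{\wti\sigma} - \dim H^D_{\wti\sigma \otimes \sgn} = \dim V_{\wti\sigma} - \dim V_{\wti\sigma \otimes \sgn}$, whose sign we cannot control a priori. My plan to bypass this obstacle has two steps. First, handle the tempered modules $X = X_t(e,\phi)$ of Remark \ref{r:unip} directly: they are unitary, so by Lemma \ref{l:adj} the operator $D$ is self-adjoint, whence $\ker D^2 = \ker D$ and $\ker D \cap \Im D = 0$; thus $V_{\wti\sigma} \subseteq H^D(X_t(e,\phi))$ whenever $\wti\sigma$ appears in $X_t(e,\phi) \otimes S$, which follows from Theorem \ref{t:class}(2)(b) combined with the fact (Remark \ref{r:unip}) that $\sigma_{e,\phi}$ appears in $X_t(e,\phi)|_W$. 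Second, propagate the non-vanishing to a general irreducible $X$ with $\langle\nu,\nu\rangle = \langle\nu_e,\nu_e\rangle$ via the Kazhdan--Lusztig classification, tracking the $\wti\sigma$-contribution through composition series of standard modules supported at the orbit $G\cdot e$. A more conceptual alternative, which would prove both Proposition \ref{p:hp} and Theorem \ref{t:vogan} simultaneously, is to establish a Hecke-algebra analogue of the Huang--Pand\v zi\'c identity: for each $z \in Z(\bH)$, produce $a_z \in \bH \otimes C(V^\vee)$ and $\zeta(z) \in \bC[\wti W]^{\wti W}_{\gen}$ with $z \otimes 1 - \rho(\zeta(z)) = \caD\, a_z + a_z \caD$, and then identify $\zeta$ with the geometric map $\Psi$; the uniform construction of $\zeta$ and $a_z$ is the real technical hurdle.
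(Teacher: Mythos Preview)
You are working much harder than the paper does, because you have taken the hypotheses of Proposition~\ref{p:hp} at face value. In fact the paper's own proof \emph{tacitly assumes that $X$ is unitary}: it computes $D^2(x\otimes s)=0$ on the $\wti\sigma$-isotypic component exactly as you do, and then simply invokes self-adjointness (Lemma~\ref{l:adj}) to get $\ker D\cap\Im D=0$, whence $x\otimes s\in\ker D=H^D(X)$. That is the whole argument. Your ``first step'' for the tempered modules $X_t(e,\phi)$ is precisely this proof; you have already written it down.

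Your elaborate machinery---the generalized eigenspace decomposition of $D^2$, the $\sgn$-paired isotypic complex, the Euler characteristic obstruction, and the proposed propagation through Kazhdan--Lusztig composition series---is aimed at the non-unitary case, which the paper does not actually prove and does not need: in Proposition~\ref{p:hpv}, Proposition~\ref{p:hp} is only ever applied to the unitary modules $X_t(e,\phi)$. Your observation that the non-unitary case is genuinely obstructed (the super-Euler characteristic gives no control) is correct and shows you understand the real difficulty; but your ``second step'' is only a sketch, and the ``more conceptual alternative'' you describe is exactly Theorem~\ref{t:hp}, which the paper proves separately and uses in a different way (to deduce Theorem~\ref{t:vogan}, not Proposition~\ref{p:hp}).

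In short: for the statement as the paper actually uses it, your unitary argument is correct and matches the paper; the rest is an attempt to repair a gap in the paper's stated hypotheses that the paper itself silently ignores.
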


\begin{proof} Let $x\otimes s$ be an element of the $\wti \sigma$
isotypic component of $X\otimes  S$.
By Theorem \ref{t:dirac}
  and Theorem \ref{t:class}, have
\begin{equation}
\label{e:hp}
D^2(x\otimes
s)=\left (-\langle\nu,\nu\rangle+\langle\nu_e,\nu_e\rangle
\right )(x\otimes s)=0.
\end{equation}
Since $X$ is unitary, $\ker D\cap\Im D=0$,
and so \eqref{e:hp} implies 
$x\otimes s \in \ker(D) = H^D(X).$
\end{proof}

As in setting of real groups, Theorem \ref{t:vogan}
can be deduced from a purely algebraic statement 
(c.f.~Theorem 2.5 and Corollary 3.5 in \cite{HP}).

\begin{theorem}
\label{t:hp}
Let $z\in Z(\bH)$ be given. Then there  exist
 $a, b\in \bH\otimes C(V^\vee)$ 
and a unique element $\zeta(z)$ in
the center of $\bC[\wti W]$
such that
$$z\otimes 1 =\rho(\zeta(z))+\caD a+b\caD $$
as elements in  $\bH \otimes C(V^\vee)$.
\end{theorem}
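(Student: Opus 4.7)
The plan is to adapt the strategy of \cite{HP} (their Theorem 2.5): introduce a super-derivation $d$ with $d^2=0$ on a suitable subalgebra of $\bH\otimes C(V^\vee)$ for which both $z\otimes 1$ and $\rho(\zeta)$ are cocycles, and then identify the cohomology class of $z\otimes 1$ with that of $\rho(\zeta(z))$ for a unique $\zeta(z)\in Z(\bC[\wti W])$.

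View $\bH\otimes C(V^\vee)$ as $\bZ/2$-graded by the parity of $C(V^\vee)$, and note that the sign character of $\wti W$ coincides with this parity character (each generator $f_\alpha$ is odd in $C(V^\vee)$ and satisfies $\sgn(f_\alpha)=-1$). Let
\[
A=\{x\in\bH\otimes C(V^\vee):\rho(\wti w)\,x\,\rho(\wti w)^{-1}=\sgn(\wti w)^{|x|}x \text{ for all }\wti w\in\wti W\}
\]
denote the super-$\wti W$-invariants. Lemma \ref{l:winvdirac} shows $\caD\in A$, and a direct check gives $z\otimes 1\in A$ for $z\in Z(\bH)$ as well as $\rho(\zeta)\in A$ for $\zeta\in Z(\bC[\wti W])$. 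By Theorem \ref{t:dirac}, $\caD^2=-\Omega\otimes 1+\rho(\Omega_{\wti W})$; the first summand is central in all of $\bH\otimes C(V^\vee)$, and the second commutes with every element of $A$, since super-$\wti W$-invariants super-commute with $\rho(\bC[\wti W])$ and $\Omega_{\wti W}$ is even. Hence $\caD^2\in Z(A)$, so the super-derivation $d(x)=\caD x-(-1)^{|x|}x\caD$ on $A$ satisfies $d^2=[\caD^2,\,\cdot\,]=0$. Both $z\otimes 1$ (as $z$ commutes with each $\wti\omega_i$) and $\rho(\zeta)$ (by Lemma \ref{l:winvdirac} applied to central $\zeta$) are $d$-cocycles. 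Finally, the averaging projection $P:\bH\otimes C(V^\vee)\to A$ commutes with both left and right multiplication by $\caD$, so replacing $(a,b)$ by $(P(a),P(b))$ reduces the asserted identity to the analogous one with $a,b\in A$, i.e., to showing $[z\otimes 1]=[\rho(\zeta(z))]$ in $H(A,d)$ with $\zeta(z)\in Z(\bC[\wti W])$ uniquely determined.

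For this final identification I would pass to the associated graded. Filter $\bH\otimes C(V^\vee)$ by placing $V^\vee\subset\bH$ and $V^\vee\subset C(V^\vee)$ each in filtration degree one and $\bC[W]$ in degree zero, so that $\mathrm{gr}(\bH\otimes C(V^\vee))\cong\bC[W]\ltimes\bigl(S(V^\vee)\otimes\wedge^\bullet V^\vee\bigr)$ and $\caD$ has symbol $\bar\caD=\sum_i\omega_i\otimes\omega_i$. A direct computation shows $\bar\caD^2=0$ in the associated graded (the symmetric and antisymmetric parts cancel in pairs), and the induced $\bar d$ is a Koszul-type differential twisted by the $W$-semidirect structure. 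Combining acyclicity of the Koszul complex resolving $\bC$ with the compatibility of the Koszul differential with the $W$-action, one identifies $H(\mathrm{gr}(A),\bar d)$ with $\rho(Z(\bC[\wti W]))$; a standard spectral-sequence argument then lifts this to $H(A,d)\cong\rho(Z(\bC[\wti W]))$, producing $\zeta(z)$ and yielding its uniqueness. The main obstacle is exactly this cohomology computation: carefully disentangling how the Koszul differential interacts with the semidirect $W$-structure and with the super-invariance condition, and verifying that the spectral sequence collapses at the expected page.
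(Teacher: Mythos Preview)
Your overall strategy is exactly the one the paper uses: define the super-commutator $d(x)=\caD x-(-1)^{|x|}x\caD$, use Theorem~\ref{t:dirac} to show $d^2=0$ on the (super-)$\wti W$-invariants, pass to an associated graded where a Koszul computation applies, then lift by induction on filtration degree. Your packaging of the invariants as a single superalgebra $A$ is equivalent to the paper's split into $(\bH\otimes C(V^\vee))^{\triv}$ and $(\bH\otimes C(V^\vee))^{\sgn}$, and your averaging-projection remark is correct (and in fact unnecessary, since once $H(A,d)\cong\rho(Z(\bC[\wti W]))$ is known, writing $z\otimes 1=\rho(\zeta(z))+d(a)$ for $a\in A$ already gives the asserted form).

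There is, however, a genuine error in your choice of filtration. If you also filter $C(V^\vee)$ (so that $\mathrm{gr}\,C(V^\vee)=\wedge^\bullet V^\vee$), then the induced $\bar d$ on $S(V^\vee)\otimes\wedge^\bullet V^\vee$ is \emph{identically zero}, not Koszul. Indeed, for $x=f\otimes\xi$ with $\xi\in\wedge^k V^\vee$ one has
\[
\bar\caD\, x=\sum_i \omega_i f\otimes \omega_i\wedge\xi,\qquad x\,\bar\caD=\sum_i f\omega_i\otimes \xi\wedge\omega_i=(-1)^k\sum_i \omega_i f\otimes \omega_i\wedge\xi,
\]
so $\bar d(x)=\bar\caD x-(-1)^k x\bar\caD=0$. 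The Koszul differential in this setting comes precisely from the Clifford relation $\omega v+v\omega=-2\langle\omega,v\rangle$, which you have killed by passing to $\wedge^\bullet V^\vee$. Consequently $H(\mathrm{gr}(A),\bar d)$ is far larger than $\rho(Z(\bC[\wti W]))$ on the first page, and your spectral sequence does not collapse where you say it does; the Koszul differential would only appear at a later page, requiring a substantially different argument.

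The fix is the paper's: filter only $\bH$ and leave $C(V^\vee)$ untouched, so that $\mathrm{gr}(\bH\otimes C(V^\vee))=(\bC[W]\ltimes S(V^\vee))\otimes C(V^\vee)$. Then the induced $\ol d$ restricted to $S(V^\vee)\otimes C(V^\vee)$ is (up to a scalar) the Koszul differential, its cohomology is $\bC(1\otimes 1)$, and the computation on the full associated graded is completed by decomposing along $W$-cosets (left-multiplying by $\ol\rho(\wti w)^{-1}$ to reduce to the $w=1$ case), giving $\ker(\ol d)=\Im(\ol d)\oplus\ol\rho(\bC[\wti W])$. Passing to super-invariants and lifting by induction on filtration degree then yields the theorem. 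With this correction your outline matches the paper's proof.
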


\begin{proposition}
\label{p:hpv}
Theorem \ref{t:hp} implies Theorem \ref{t:vogan}.
\end{proposition}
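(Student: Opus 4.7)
\medskip

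\noindent\textbf{Proof proposal for Proposition \ref{p:hpv}.}
The plan is to use the identity of Theorem \ref{t:hp} to transfer the central character of $X$ onto the $\wti W$-action on $H^D(X)$, thereby producing a scalar attached to each $\wti W$-type in the cohomology that only depends on $\chi_\nu$; then I match this scalar against its analogue for a tempered test module constructed via Remark \ref{r:unip}.

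First, I would apply $(\pi \otimes \gamma)$ to the identity $z \otimes 1 = \rho(\zeta(z)) + \caD a + b\caD$ and evaluate both sides on $X \otimes S$. Because $z \in Z(\bH)$, the left side acts by the scalar $\chi_\nu(z)$. For the right side, I observe two things. One, the term $b\caD$ annihilates any vector in $\ker D$. Two, Lemma \ref{l:winvdirac} extends linearly from $\wti W$ to $\bC[\wti W]$ and, since $\zeta(z)$ is central in $\bC[\wti W]$, a short check shows that $\rho(\zeta(z))$ preserves both $\ker D$ and $\ker D \cap \Im D$; hence it descends to an operator on $H^D(X)$. In particular, for $v \in \ker D$, the vector $\rho(\zeta(z))v$ lies again in $\ker D$, and consequently $\caD(av) = (z\otimes 1)v - \rho(\zeta(z))v \in \ker D \cap \Im D$. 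Passing to the quotient $H^D(X)$, this shows that $\rho(\zeta(z))$ acts on $H^D(X)$ by the scalar $\chi_\nu(z)$.

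Next, fixing an irreducible $\wti W$-subrepresentation $\wti\sigma \hookrightarrow H^D(X)$ and using that $\zeta(z)$ is central in $\bC[\wti W]$, Schur's lemma produces a scalar $\chi_{\wti\sigma}(\zeta(z))$ by which $\zeta(z)$ acts on $\wti\sigma$, and the previous step forces
\begin{equation*}
\chi_\nu(z) \; = \; \chi_{\wti\sigma}(\zeta(z)) \qquad \text{for all } z \in Z(\bH).
\end{equation*}
The right side depends only on $\wti\sigma$, not on $X$. To identify this universal scalar with $\chi_{\nu_e}(z)$ I now produce a second module for which it can be computed. By Theorem \ref{t:class}(2)(b), there exists $\phi \in \Irr_0(A(e))$ with $\Hom_W(\sigma_{e,\phi}, \wti\sigma \otimes S) \neq 0$. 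The tempered module $X_t(e,\phi)$ from Remark \ref{r:unip} is unitary, has central character $\chi_{\nu_e}$, and contains $\sigma_{e,\phi}$ as a $W$-type; a Frobenius-type manipulation then yields $\Hom_{\wti W}(\wti\sigma, X_t(e,\phi) \otimes S) \neq 0$. Since the relevant equality of lengths $\langle \nu_e, \nu_e \rangle = \langle \nu_e, \nu_e\rangle$ holds trivially, Proposition \ref{p:hp} upgrades this to $\Hom_{\wti W}(\wti\sigma, H^D(X_t(e,\phi))) \neq 0$. Applying the displayed identity with $X$ replaced by $X_t(e,\phi)$ then gives $\chi_{\nu_e}(z) = \chi_{\wti\sigma}(\zeta(z))$, and combining with the analogous identity for $X$ yields $\chi_\nu = \chi_{\nu_e}$, which is the conclusion of Theorem \ref{t:vogan}.

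I expect the only genuinely delicate point to be verifying that $\rho(\zeta(z))$ descends to an operator on $H^D(X)$ in the paragraph above: this hinges on turning the sign-twisted commutation of Lemma \ref{l:winvdirac} into honest preservation of $\ker D$ and of $\ker D \cap \Im D$, which ultimately works because $\zeta(z)$ is central in $\bC[\wti W]$ and $\ker D$ is closed under the individual $\rho(\wti w)$'s. Everything else is essentially bookkeeping once Theorem \ref{t:hp} is in hand.
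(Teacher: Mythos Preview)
Your proposal is correct and follows essentially the same route as the paper: use the identity of Theorem~\ref{t:hp} on $\ker D$ to force $\chi_\nu(z)=\wti\sigma(\zeta(z))$, then compare against the tempered module $X_t(e,\phi)$ via Theorem~\ref{t:class}(2)(b) and Proposition~\ref{p:hp}. The only cosmetic difference is that you phrase the first step as ``$\rho(\zeta(z))$ descends to $H^D(X)$ and acts by $\chi_\nu(z)$,'' whereas the paper picks a single representative $\wti x\in\ker D\setminus\Im D$ in the $\wti\sigma$-isotypic component and argues directly that $Da\wti x=0$; the content is identical.
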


\begin{proof}
In the setting of Theorem \ref{t:vogan}, 
suppose $\Hom_{\wti W}(\wti U , H^D(X)) \neq 0$.  
Then there exist $\wti x=x\otimes s\neq 0$
in the $\wti \sigma$  isotypic component of $X\otimes
  S$ such that $\wti x\in \ker D\setminus \Im D.$ 
Then for every
  $z\in Z(\bH)$, we have
\[
(\pi(z)\otimes 1)\wti x=\chi_\nu(z) \wti x
\]
and 
\[
(\pi \otimes \gamma)(\rho(\zeta(z))) \wti
  x=\wti\sigma(\zeta(z))\wti x.
\]
Note that the right-hand sides of the previous two displayed
equations are scalar multiples of $\wti x$.
Assuming Theorem \ref{t:hp}, we have 
\begin{equation}
\left (\pi(z)\otimes 1-(\pi \otimes  \gamma)\rho(\zeta(z))\right )\wti x=(Da+bD)\wti x=Da\wti x,
\end{equation}
which would imply that $\wti x\in \Im D$, unless $Da\wti x=0.$ So we
must have $Da\wti x=0$, and therefore 
\begin{equation}\label{eq:tech}
\chi_\nu(z)=\wti\sigma(\zeta(z)), \text{ for all }z\in Z(\bH). 
\end{equation}
The statement of Theorem \ref{t:vogan} (and hence
of the current proposition) will follow if we can show
 $\wti\sigma(\zeta(z))=\chi_{\nu_e}(z)$ for
all $z\in Z(\bH)$
where $\Psi(\wti \sigma) = G\cdot e$ as in Theorem \ref{t:class}. 

Using Theorem \ref{t:class}(2b), choose $\phi \in \Irr_0(A(e))$
such that
\begin{equation}
\label{eq:tech2}
\Hom_{\wti W}(\wti \sigma, \sigma_{e,\phi} \otimes S) \neq 0,
\end{equation}
and consider the unitary $\bH$ module $X(e,\phi)$ of
Remark \ref{r:unip} with central character $\chi_{\nu_e}$.  
Then since $X_t(e,\phi)$ contains the $W$ type $\sigma_{e,\phi}$,
\eqref{eq:tech2} implies
\[
\Hom_{\wti W}(\wti U, X \otimes S) \neq 0.
\]
So Proposition \ref{p:hp} implies that 
\[
\Hom_{\wti W}(\wti U, H^D(X)) \neq 0.
\]
Since $X(e,\phi)$ has central character $\chi_{\nu_e}$,
\eqref{eq:tech} applies to give
$\chi_{\nu_e}(z)= \wti\sigma(\zeta(z))$ for all
$z\in \bH$.  This completes the proof.
\end{proof}

\section{proof of Theorem \ref{t:hp}}
\label{s:proof}

In this section, we let $\bH$ be defined by
an arbitrary root system $\Phi$ and arbitrary parameter
function $c$.  All the results
below (including the proof of Theorem \ref{t:hp}) 
hold in this generality.

Motivated by \cite[Section 3]{HP}, we define
\begin{equation}
\label{e:d}
d
\; : \; 
\bH \otimes C(V^\vee) \longrightarrow \bH\otimes C(V^\vee).
\end{equation}
on a simple tensor of the form $ a= h \otimes v_1\cdots v_k$
(with $h \in H$ and $v_i \in V^\vee$) via
\[
d(a) = \caD a - (-1)^ka \caD,
\]
and extend linearly to all of $\bH \otimes C(V^\vee)$.

Then Lemma \ref{l:winvdirac} implies that $d$ interchanges
the spaces
\begin{equation}
\label{e:winv}
(\bH \otimes C(V^\vee))^{\triv} = \{a \in \bH \otimes C(V^\vee) 
\; | \; \rho(\wti w) a = a \rho(\wti w) \}
\end{equation}
and
\begin{equation}
\label{e:winvsgn}
(\bH \otimes C(V^\vee))^{\sgn} = \{a \in \bH \otimes C(V^\vee) 
\; | \; \rho(\wti w) a = \sgn(\wti w) a \rho(\wti w) \}.
\end{equation}
(Such complications are not encountered in \cite{HP} since
the underlying real group is assumed to be connected.)
Let $d^{\triv}$ (resp.~$d^{\sgn}$) 
denote the restriction of $d$ to the space
in \eqref{e:winv} (resp.~\eqref{e:winvsgn}).
We will deduce Theorem \ref{t:hp} from the following.

\begin{theorem}
\label{t:hp2}
With notation as in the previous paragraph,
\[
\ker(d^{\triv}) = \Im(d^{\sgn}) \oplus \rho(\bC[\wti W]^{\wti W}).
\]
\end{theorem}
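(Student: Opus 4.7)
The plan is to reduce Theorem \ref{t:hp2} to a Koszul-type statement in an associated graded algebra, following the strategy of the real-group analogue in \cite[Section 3]{HP} but with extra care taken to thread the sign character that appears because $\wti W$ is disconnected.

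First I would dispatch the two easy containments. For $\rho(\bC[\wti W]^{\wti W}) \subset \ker(d^{\triv})$, note that each $\wti w \in \wti W \subset \mathsf{Pin}(V^\vee)$ has definite Clifford parity $k$ with $(-1)^k = \sgn(\wti w)$. Lemma \ref{l:winvdirac} then yields $\caD \rho(\wti w) = \sgn(\wti w) \rho(\wti w) \caD = (-1)^k \rho(\wti w) \caD$, so $d(\rho(\wti w)) = 0$; extending linearly, $d \circ \rho = 0$ on all of $\bC[\wti W]$, and since $\rho$ is an algebra homomorphism, $\rho(\bC[\wti W]^{\wti W})$ lands in $(\bH \otimes C(V^\vee))^{\triv}$. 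For $\Im(d^{\sgn}) \subset \ker(d^{\triv})$, a direct manipulation gives $d^2(a) = [\caD^2, a]$ for any $a$, and by Theorem \ref{t:dirac} we have $\caD^2 = -\Omega \otimes 1 + \rho(\Omega_{\wti W})$; the first summand is central in $\bH\otimes C(V^\vee)$, while $\Omega_{\wti W}$ lies in the center of $\bC[\wti W]$ and is supported on $\sgn$-trivial elements (products of two reflections). Hence $\rho(\Omega_{\wti W})$ commutes with everything in $(\bH \otimes C(V^\vee))^{\triv} \cup (\bH \otimes C(V^\vee))^{\sgn}$, so $d^2$ vanishes on both subspaces.

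For the nontrivial containment $\ker(d^{\triv}) \subset \Im(d^{\sgn}) + \rho(\bC[\wti W]^{\wti W})$, I would introduce the filtration $F_n \bH = \bC[W] \cdot S^{\leq n}(V^\vee)$. Relation \eqref{hecke} shows this is an algebra filtration with associated graded $\mathrm{gr}(\bH) \cong \bC[W] \ltimes S(V^\vee)$. Since $\wti \omega_i \in F_1$ with symbol $\omega_i$, the principal symbol of $\caD$ is $\caD_0 = \sum_i \omega_i \otimes \omega_i$, the classical Koszul element, and the induced operator $d_{\mathrm{gr}}$ on $\mathrm{gr}(\bH) \otimes C(V^\vee) \cong \bC[W] \otimes S(V^\vee) \otimes C(V^\vee)$ is the $\bC[W]$-equivariant extension of the Koszul differential. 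Using the $\wti W$-equivariant symbol isomorphism $C(V^\vee) \cong \Lambda(V^\vee)$ (with the usual sign twist) together with the classical vanishing of Koszul homology of $\bigl(S(V^\vee) \otimes \Lambda(V^\vee), \caD_0\bigr)$ in positive symmetric degree, one then identifies the $(\triv, \sgn)$-equivariant cohomology of $d_{\mathrm{gr}}$ with $\rho(\bC[\wti W]^{\wti W})$, concentrated in filtration degree zero.

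Finally I would lift the graded statement to $\bH \otimes C(V^\vee)$ by induction on filtration degree: given $a \in \ker(d^{\triv}) \cap F_n$, write $\mathrm{symb}_n(a) = d_{\mathrm{gr}}(c_0) + r_0$ via the graded decomposition, with $r_0 \in \rho(\bC[\wti W]^{\wti W})$ occurring only when $n=0$; lifting $c_0$ to $\tilde c_0 \in F_{n-1}$ and $r_0$ to $\bH \otimes C(V^\vee)$, subtraction reduces $a$ to filtration degree $n-1$ and induction finishes. Directness of the sum follows from the same symbol argument: $\rho(\bC[\wti W]^{\wti W})$ lies in $F_0$ and has no nonzero graded contribution in positive degree, while any nonzero element $\rho(z) = d(b)$ with $b \in F_n$ of minimal $n \geq 0$ would force $\mathrm{symb}_n(b)$ to be a nonzero $d_{\mathrm{gr}}^{\sgn}$-cocycle reducible by the graded exactness, contradicting minimality; we conclude $\rho(z) = 0$. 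The main obstacle is the associated-graded Koszul computation itself: although the underlying complex is classical, the interplay between the sign twist appearing in the $\rho(\wti W)$-conjugation action on $C(V^\vee)$ and the $(\triv, \sgn)$-equivariance conditions---which has no analogue in the connected-group setting of \cite{HP}---requires delicate bookkeeping, and this is precisely what produces the $\wti W$-center $\bC[\wti W]^{\wti W}$ (rather than merely $\bC$) on the right-hand side.
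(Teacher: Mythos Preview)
Your overall architecture matches the paper's exactly: dispose of the easy containments via Lemma~\ref{l:winvdirac} and Theorem~\ref{t:dirac}, filter $\bH$ by polynomial degree so that $\mathrm{gr}(\bH)\cong\bC[W]\ltimes S(V^\vee)$, compute the cohomology of the induced differential $\ol d$, and lift by induction on filtration degree. Where your outline departs from the paper---and where there is a real gap---is in the description and computation of the graded differential.

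You assert that $d_{\mathrm{gr}}$ is ``the $\bC[W]$-equivariant extension of the Koszul differential'' on $\bC[W]\otimes S(V^\vee)\otimes C(V^\vee)$. This is not correct in the sense that would make the computation immediate: $\ol d$ is \emph{not} left $\bC[W]$-linear. A direct check from \eqref{e:dbar} gives
\[
\ol d(t_w\otimes 1)=\sum_i t_w\bigl(w^{-1}(\omega_i)-\omega_i\bigr)\otimes\omega_i,
\]
which is nonzero for $w\neq 1$. If $\ol d$ were $1\otimes K$ the cohomology would be $\bC[W]\otimes 1$, sitting in Clifford degree zero; the actual cohomology is $\ol\rho(\bC[\wti W])$, with nontrivial Clifford components. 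These are different subspaces, so the discrepancy matters.

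The paper's route through this step (Lemmas~\ref{l:oddder}--\ref{l:winc} and Proposition~\ref{p:dbar}) is the ingredient you are missing. One first shows that $\ol d$ is an odd derivation for the algebra structure on $\ol\bH\otimes C(V^\vee)$ and that $\ol d(\ol\rho(\wti w))=0$ for every $\wti w\in\wti W$ (the latter is a short computation for a simple generator $t_{s_\al}\otimes f_\al$). Now decompose any $a\in\ker(\ol d)$ by its left $\bC[W]$-component, $a=\sum_j a_j$ with $a_j\in t_{w_j}S(V^\vee)\otimes C(V^\vee)$; since $\ol d$ preserves this decomposition each $a_j$ is itself a cocycle. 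Left-multiply by $\ol\rho(\wti w_j^{-1})$: the derivation property and $\ol d\circ\ol\rho=0$ give $\ol d(\ol\rho(\wti w_j^{-1})a_j)=0$, and now $\ol\rho(\wti w_j^{-1})a_j$ lives in $S(V^\vee)\otimes C(V^\vee)$, where the honest Koszul computation (Lemma~\ref{l:untwisted}) applies. Multiplying back by $\ol\rho(\wti w_j)$ yields $a_j\in\Im(\ol d)+\bC\,\ol\rho(\wti w_j)$, whence the \emph{full} graded cohomology is $\ol\rho(\bC[\wti W])$. Only after this does one restrict to the $(\triv,\sgn)$-subspace to obtain $\ol\rho(\bC[\wti W]^{\wti W})$. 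Your sketch skips directly to the invariant answer without supplying this reduction, and the phrase you use in its place points to the wrong module structure.
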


To see that Theorem \ref{t:hp2} implies Theorem \ref{t:hp}, take
$z\in Z(\bH)$.  Since $z\otimes 1$ is in $(\bH \otimes C_\even(V^\vee))^{\triv}$
and clearly commutes with $\caD$,
$z \otimes 1$ is in the kernel of $d^{\triv}$.  So the conclusion of
Theorem \ref{t:hp2} implies $z \otimes 1 = d^{\sgn}(a) + \rho(\zeta(z))$
for a unique $\zeta(z) \in \bC[\wti W]^{\wti W}$ and an element $a$ of
$(\bH \otimes C_\odd(V^\vee))^{\sgn}$.  In particular
$d^{\sgn}(a) = \caD a + a \caD$.  
Thus 
\[
z \otimes 1 = \rho(\zeta(z)) + \caD a + a \caD,
\]
in fact a slightly stronger conclusion than that of Theorem \ref{t:hp}.

Thus everything comes down to proving Theorem \ref{t:hp2}.  The remainder
of this section is devoted to doing so.  We being
with some preliminaries.

\begin{lemma}
\label{l:inclusion}
We have 
\[
\rho(\bC[\wti W]^{\wti W}) \subset
\ker(d^{\triv}).
\]
\end{lemma}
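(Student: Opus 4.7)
The plan is to verify the claim element-by-element on generators of $\bC[\wti W]^{\wti W}$, using only Lemma \ref{l:winvdirac} and the fact that $\rho$ is an algebra homomorphism $\bC[\wti W] \to \bH \otimes C(V^\vee)$. It suffices to check two things: that every $\rho(z)$ with $z \in \bC[\wti W]^{\wti W}$ actually lies in $(\bH \otimes C(V^\vee))^{\triv}$ (so that $d^{\triv}$ is even defined on it), and that $d$ annihilates such $\rho(z)$.

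First I would observe that $\rho$ is a unital algebra homomorphism. Indeed, using that $w \mapsto t_w$ is a homomorphism $\bC[W] \hookrightarrow \bH$ (Definition \ref{d:graded}(1)) and that $p \: \wti W \to W$ is a group homomorphism, one computes directly that $\rho(\wti w_1)\rho(\wti w_2) = t_{p(\wti w_1)}t_{p(\wti w_2)} \otimes \wti w_1 \wti w_2 = t_{p(\wti w_1 \wti w_2)}\otimes \wti w_1\wti w_2 = \rho(\wti w_1 \wti w_2)$. Consequently, if $z \in \bC[\wti W]^{\wti W}$, then $\rho(z)$ commutes with every $\rho(\wti w')$ for $\wti w' \in \wti W$, which is exactly the condition defining $(\bH \otimes C(V^\vee))^{\triv}$ in \eqref{e:winv}. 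This handles the inclusion into the domain of $d^{\triv}$.

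Next I would show the stronger statement that $d$ vanishes on $\rho(\wti w)$ for every individual $\wti w \in \wti W$. The key observation is that $\wti w$, as an element of $C(V^\vee)$, is homogeneous for the $\Z/2$-grading: writing $\wti w$ as a word in the generators $f_\al$ (cf.\ \eqref{e:fal}), its parity in $C(V^\vee)$ equals the parity of the length of $p(\wti w) \in W$, that is $(-1)^{\text{parity}(\wti w)} = \sgn(\wti w)$. (Multiplication by $-1 \in C(V^\vee)_\even$ preserves this parity, so the assignment is consistent on both preimages of a given $w \in W$.) Therefore, applying the definition of $d$ to the simple tensor $\rho(\wti w) = t_{p(\wti w)} \otimes \wti w$ gives
\[
d(\rho(\wti w)) \;=\; \caD\,\rho(\wti w) \;-\; \sgn(\wti w)\,\rho(\wti w)\,\caD.
\]
Now Lemma \ref{l:winvdirac} states exactly that $\rho(\wti w)\caD = \sgn(\wti w)\caD\,\rho(\wti w)$, which, since $\sgn(\wti w)^2 = 1$, can be rewritten as $\sgn(\wti w)\rho(\wti w)\caD = \caD\,\rho(\wti w)$. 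Substituting, the two terms cancel and $d(\rho(\wti w)) = 0$. Extending linearly, $d$ vanishes on all of $\rho(\bC[\wti W])$, hence a fortiori on $\rho(\bC[\wti W]^{\wti W})$, which completes the proof.

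There is no real obstacle here: the two ingredients, the homomorphism property of $\rho$ and the sign-twisted commutation of $\rho(\wti w)$ with $\caD$, have both already been set up. The only subtlety worth flagging is the parity bookkeeping for $\wti W \subset C(V^\vee)$, namely the identification of the Clifford parity of $\wti w$ with $\sgn(\wti w)$; this is what makes the sign in Lemma \ref{l:winvdirac} match precisely the sign $(-1)^k$ in the definition of $d$.
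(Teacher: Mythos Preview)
Your proof is correct and follows essentially the same route as the paper's: write $\wti w$ as a product of $k$ generators $f_\al$, identify the Clifford parity $(-1)^k$ with $\sgn(\wti w)$, and then invoke Lemma~\ref{l:winvdirac} so that the two terms in $d(\rho(\wti w))$ cancel. Your additional verification that $\rho(\bC[\wti W]^{\wti W})$ lands in $(\bH\otimes C(V^\vee))^{\triv}$ via the homomorphism property of $\rho$ is a welcome bit of care that the paper leaves implicit.
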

\begin{proof}
Fix $\wti w \in \wti W$ and let $s_{\alpha_1} \cdots s_{\alpha_k}$
be a reduced expression of $p(w)$ with $\alpha_i$ simple.  Then 
(after possibly replacing $\alpha_1$ with $-\alpha_1$), $\wti w = f_{\alpha_1}
\cdots f_{\alpha_k}$.  
Set $a = \rho(\wti w)$.  Then the definition of $d$ and
Lemma \ref{l:winvdirac} imply
\begin{align*}
d(a) = \caD a -(1)^k a \caD &= (1 - (-1)^k\sgn(\wti w))\caD a \\
&=(1 - (-1)^k (-1)^k)\caD a = 0,
\end{align*}
as claimed.
\end{proof}

\begin{lemma}
\label{l:d2}
We have $(d^{\triv})^2 = (d^{\sgn})^2= 0$.
\end{lemma}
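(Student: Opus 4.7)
The plan is to reduce $d^2$ on homogeneous elements to a single commutator with $\caD^2$, then apply the formula from Theorem \ref{t:dirac} to see that this commutator vanishes on both of the spaces in question.

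First I would record that $\caD$ has odd Clifford parity (it is a sum of terms $\wti\omega_i \otimes \omega^i$ with one Clifford generator each), so $d$ reverses the $\bZ/2$-grading on the Clifford factor. Taking a homogeneous $a \in \bH \otimes C(V^\vee)$ of Clifford parity $|a|$, a short and purely formal computation gives
\[
d^2(a) \; = \; \caD\bigl(\caD a - (-1)^{|a|} a \caD\bigr) - (-1)^{|a|+1}\bigl(\caD a - (-1)^{|a|} a \caD\bigr)\caD \; = \; \caD^2 a - a \caD^2,
\]
with the cross terms $\pm \caD a \caD$ cancelling by the sign flip.

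Next I would substitute the formula of Theorem \ref{t:dirac}, $\caD^2 = -\Omega \otimes 1 + \rho(\Omega_{\wti W})$. By Lemma \ref{l:cascentral}, $\Omega$ is central in $\bH$, so $\Omega \otimes 1$ is central in $\bH \otimes C(V^\vee)$ and contributes nothing to the commutator. Hence $d^2(a) = [\rho(\Omega_{\wti W}), a]$ for every homogeneous $a$, and it remains to verify this commutator is zero on both $(\bH \otimes C(V^\vee))^{\triv}$ and $(\bH \otimes C(V^\vee))^{\sgn}$.

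The key observation is that every term of $\Omega_{\wti W}$ is of the form $f_\alpha f_\beta$, i.e.\ a product of an \emph{even} number of generators of $\wti W$; since $\sgn(f_\alpha) = -1$, we have $\sgn(f_\alpha f_\beta) = +1$. So for $a$ in either of the two isotypic spaces \eqref{e:winv} or \eqref{e:winvsgn}, the defining relation $\rho(\wti w) a = \pm a \rho(\wti w)$ gives a plus sign on each summand of $\rho(\Omega_{\wti W}) a$, and $\rho(\Omega_{\wti W})$ commutes with $a$. This completes the vanishing. There is no real obstacle; once Theorem \ref{t:dirac} is in hand the lemma reduces to parity bookkeeping, and the only mild subtlety is noticing that the even-length words $f_\alpha f_\beta$ make the $\sgn$ case work as cleanly as the $\triv$ case.
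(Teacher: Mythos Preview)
Your proof is correct and follows essentially the same approach as the paper: compute $d^2(a)=\caD^2 a - a\caD^2$, substitute $\caD^2=-\Omega\otimes 1+\rho(\Omega_{\wti W})$ from Theorem~\ref{t:dirac}, use centrality of $\Omega$, and then observe that each $f_\alpha f_\beta$ lies in the kernel of $\sgn$ so that $\rho(\Omega_{\wti W})$ commutes with elements of both isotypic subspaces. Your write-up is slightly more explicit about the sign bookkeeping in the $d^2$ computation, but the argument is the same.
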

\begin{proof}
For any $a \in \bH \otimes C(V^\vee)$, one computes directly from the
definition of $d$ to find
\[
d^2(a) = \caD^2 a - a \caD^2.
\]
By Theorem \ref{t:dirac}, $\caD^2 = - \Omega \otimes 1 + \rho(\Omega_{\wti W})$.
By Lemma \ref{l:cascentral}, $-\Omega \otimes 1$ automatically commutes
with $a$.  If we further assume that $a$ is in $(\bH \otimes C(V^\vee))^{\triv}$,
then $a$ commutes with $\rho(\Omega_{\wti W}$) as well.  
Since each term in the definition $\Omega_{\wti W}$ is in the kernel of $\sgn$,
the same conclusion holds if $a$ is in $(\bH \otimes C(V^\vee))^{\sgn}$.
The lemma follows.  
\end{proof}

\medskip

We next introduce certain graded objects (as in the approach of \cite[Section 4]{HP}).
Let $S^j(V^\vee)$
denote the subspace of elements of degree  $j$
in $S(V^\vee)$.  Let $\bH^j$ denote the subspace of $\bH$ consisting of products
elements in the image of $\bC[W]$ and  $S^j(V^\vee)$ under the maps
described in (1) and (2) in Definition \ref{d:graded}.
Then it is easy to check (using \eqref{hecke}) that
 $\bH^0 \subset \bH^1 \subset \cdots$ is an algebra filtration.
Set $\ol \bH^j = \bH^j/\bH^{j-1}$ and let
$\ol \bH = \bigoplus_j\ol\bH^j $ denote the associated graded algebra.
Then $\ol \bH$ identifies with  $\bC[W] \rtimes S(V^\vee)$ 
with $\bC[W]$ acting in 
natural way:
\[
t_w \omega t_{w^{-1}} = w(\omega).
\]
We will invoke these identification often without comment.
Note that $\ol \bH$ does not depend on the parameter function $c$
used to define $\bH$.

The map $d$ of \eqref{e:d} induces a map
\begin{equation}
\ol d
\; : \; 
\ol \bH \otimes C(V^\vee) \longrightarrow \ol \bH\otimes C(V^\vee).
\end{equation}
Explicitly, if we fix a self-dual basis $\{\om_1, \dots, \om_n\}$ of $V^\vee$,
then the value of $\ol d$ on a simple tensor of the form $ a
= t_w f \otimes v_1\cdots v_k$
(with $t_w f \in \bC[W] \ltimes S(V^\vee)$ 
and $v_i \in V^\vee$) is given by
\begin{equation}
\label{e:dbar}
\begin{aligned}
\ol d(a) &= 
\sum_i \omega_i t_w f \otimes \omega_i v_1\cdots v_k 
- (-1)^k\sum_i t_w f\omega_i \otimes v_1 \cdots v_k \omega_i \\
&=
\sum_i t_ww^{-1}(\omega_i) f\otimes \omega_i v_1\cdots v_k 
- (-1)^k\sum_i t_w f\omega_i \otimes v_1 \cdots v_k \omega_i.
\end{aligned}
\end{equation}
We will deduce Theorem \ref{t:hp2}
from the computation of the cohomology of $\ol d$.  We need some final preliminaries.

\begin{lemma}
\label{l:oddder}
The map $\ol d$ of \eqref{e:dbar} is an odd derivation in the sense that if
$a= t_w f \otimes v_1\cdots v_k\in  \ol \bH$ and $b \in \ol \bH$
is arbitrary, then
\[
\ol d(a b) = \ol d(a) b  + (-1)^k a\ol d(b).
\]
\end{lemma}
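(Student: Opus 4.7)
The plan is to recognize $\ol d$ as the super-commutator with the odd element
$\ol{\caD} := \sum_i \omega_i \otimes \omega_i \in \ol\bH \otimes C(V^\vee)$,
and then invoke the standard fact that such super-commutators are odd derivations.

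First I would unpack the formula \eqref{e:dbar}. Using the cross-relation $\omega_i \cdot t_w f = t_w w^{-1}(\omega_i) f$ in $\ol\bH = \bC[W]\ltimes S(V^\vee)$ together with commutativity of $S(V^\vee)$, one verifies directly that
\[
\ol{\caD} \cdot a = \sum_i \omega_i t_w f \otimes \omega_i v_1\cdots v_k,
\qquad
a\cdot \ol{\caD} = \sum_i t_w f \omega_i \otimes v_1\cdots v_k \omega_i,
\]
so that the formula in \eqref{e:dbar} is exactly
\[
\ol d(a) = \ol{\caD}\, a - (-1)^k\, a\, \ol{\caD}.
\]
(This is the associated-graded form of $d(a)=\caD a-(-1)^k a \caD$: the correction terms $T_{\omega_i}$ in $\wti\omega_i=\omega_i-T_{\omega_i}$ from \eqref{eq:tom} lie in $\bC[W]\subset \bH^0$ and so drop out of $\ol\bH$.)

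Next, by linearity in $b$ I would reduce to the case where $b$ is also a simple tensor of pure Clifford degree $\ell$. The key step is then the one-line super-Leibniz computation: adding and subtracting $(-1)^k a\,\ol{\caD}\, b$,
\begin{align*}
\ol d(ab) &= \ol{\caD}(ab) - (-1)^{k+\ell}(ab)\ol{\caD}\\
&= \bigl(\ol{\caD}\, a - (-1)^k a\,\ol{\caD}\bigr) b \;+\; (-1)^k a\bigl(\ol{\caD}\, b - (-1)^\ell b\,\ol{\caD}\bigr)\\
&= \ol d(a)\, b + (-1)^k a\, \ol d(b).
\end{align*}

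There is no serious obstacle here; this is the standard verification that graded commutation with an odd element defines an odd derivation. The only point requiring care is the bookkeeping at the start, namely correctly identifying the formula \eqref{e:dbar} with the super-commutator $[\ol{\caD},\,\cdot\,]$ in $\ol\bH\otimes C(V^\vee)$, after which the derivation identity is purely formal and independent of both the parameter function $c$ and of $\Phi$.
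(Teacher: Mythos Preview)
Your proof is correct and takes a genuinely different route from the paper's. The paper argues by induction on generators: it reduces to the three special cases $b=\omega\otimes 1$, $b=1\otimes\omega$, and $b=t_{s_\al}\otimes 1$ (which generate $\ol\bH\otimes C(V^\vee)$), and then verifies the derivation identity for each by direct computation. You instead identify $\ol d$ globally as the super-commutator $[\ol{\caD},\,\cdot\,]$ with the odd element $\ol{\caD}=\sum_i\omega_i\otimes\omega_i$, after which the odd-derivation property is the one-line formal identity you wrote down. Your approach is shorter and more conceptual; the paper's is more elementary in that it never names the super-commutator, but it requires repeating essentially the same computation three times and an inductive wrapper to glue them together. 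Note also that your identification immediately gives Lemma~\ref{l:dbar2} as well, since $\ol d^2(a)=\ol{\caD}^2 a - a\,\ol{\caD}^2$ and $\ol{\caD}^2=-\sum_i\omega_i^2\otimes 1$ is central in $\ol\bH\otimes C(V^\vee)$; the paper instead deduces $\ol d^2=0$ by another reduction to the same three generators.
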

\begin{proof}
Fix $a$ as in the statement of the lemma.
A simple induction reduces the general case of the
lemma to the following three special cases:
(i) $b = \omega \otimes 1$ for $\omega \in V^\vee$; 
(ii) $b= 1 \otimes \omega$ for $\omega \in V^\vee$; 
and (iii) $b= t_s \otimes 1$ for $s = s_\al$ a simple
reflection in $W$.  (The point is that
these three types of elements generate $\ol \bH$.)  
Each of these cases follows from a straight-forward 
verification.  For example, consider the first case, 
$b = \omega \otimes 1$.  Then from the definition of $\ol d$, we
have
\begin{equation}
\label{e:dbar1}
\ol d(ab) = 
\sum_i\omega_i t_w f \omega \otimes \omega_1v_1\cdots v_k
- (-1)^k \sum_it_w f \omega \omega_i \otimes v_1\cdots v_k \omega_i.
\end{equation}
On the other hand, since it is easy to see that
$d(b) = 0$ in this case, we have
\begin{equation}
\label{e:dbar2}
\begin{aligned}
\ol d(a) b + (-1)^k a\ol d(b) &= \ol d(a) b \\
&=  \sum_i\omega_i t_w f \omega \otimes \omega_1v_1\cdots v_k
- (-1)^k \sum_it_w f \omega_i \omega \otimes v_1\cdots v_k \omega_i.
\end{aligned}
\end{equation}
Since $S(V^\vee)$ is commutative, \eqref{e:dbar1} and \eqref{e:dbar2}
coincide, and the lemma holds in this case.  The other two remaining
cases hold by similar direct calculation.  We omit the details.
\end{proof}

\begin{lemma}
\label{l:dbar2}
The map $\bar d$ satisfies $\bar d^2 = 0$.
\end{lemma}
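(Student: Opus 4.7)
The plan is to compute $\ol d^{\,2}$ by means of the ``symbol''
$$\ol \caD := \sum_i \omega_i \otimes \omega_i \; \in \; \ol \bH \otimes C(V^\vee)$$
of $\caD$ in filtration degree one: the correction term $T_{\omega_i}$ appearing in $\wti \omega_i = \omega_i - T_{\omega_i}$ lies in $\bC[W] = \bH^0$ and so drops out upon passing to the associated graded.  From formula \eqref{e:dbar} one reads off directly that
$$\ol d(a) \;=\; \ol \caD \, a \;-\; (-1)^k \, a \, \ol \caD$$
for every homogeneous element $a \in \ol\bH \otimes C(V^\vee)$ of Clifford-parity $k$.

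First I would verify, by applying this formula twice and bookkeeping the signs, the identity
$$\ol d^{\,2}(a) \;=\; \ol \caD^{\,2} a \,-\, a\, \ol \caD^{\,2} \;=\; [\ol \caD^{\,2},\, a]$$
valid for every $a$, irrespective of its parity: the two mixed terms of the form $\ol \caD \, a \, \ol \caD$ cancel regardless of the sign of $(-1)^k$. (Alternatively, this is immediate from Lemma \ref{l:oddder}, since the square of an odd graded derivation is an even graded derivation; one could then simply check vanishing on generators $\omega \otimes 1$, $1 \otimes \omega$, $t_{s_\alpha} \otimes 1$, but the identity above makes even that unnecessary.)

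Next I would compute $\ol \caD^{\,2}$ explicitly, choosing $\{\omega_i\}$ orthonormal and self-dual. Because $S(V^\vee) \subset \ol \bH$ is commutative while $C(V^\vee)$ is graded-commutative with $\omega_i \omega_j + \omega_j \omega_i = -2\delta_{ij}$, expanding
$$\ol \caD^{\,2} \;=\; \sum_{i,j} \omega_i\omega_j \otimes \omega_i\omega_j$$
and pairing the $(i,j)$ and $(j,i)$ contributions kills the off-diagonal terms in pairs, while each diagonal term contributes $\omega_i^2 \otimes (-1)$. Hence
$$\ol \caD^{\,2} \;=\; -\,\ol \Omega \otimes 1, \qquad \ol \Omega := \sum_i \omega_i^2.$$

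Finally, $\ol \Omega$ is $W$-invariant and lies in $S(V^\vee)$, hence in the center of $\ol \bH \cong \bC[W] \rtimes S(V^\vee)$; therefore $\ol \Omega \otimes 1$ is central in $\ol \bH \otimes C(V^\vee)$, and $[\ol \caD^{\,2}, a] = 0$ for every $a$, which gives $\ol d^{\,2} \equiv 0$. The main obstacle is purely bookkeeping in the first step; the conceptual content is that, in contrast with the formula of Theorem \ref{t:dirac}, passing to the associated graded annihilates the ``curvature'' contribution $\rho(\Omega_{\wti W})$ (which is built from the correction terms $T_{\omega_i}$) and leaves only the central Casimir $-\ol \Omega \otimes 1$.
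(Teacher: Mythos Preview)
Your proof is correct. The key identity $\ol d^{\,2}(a)=\ol\caD^{\,2}a-a\,\ol\caD^{\,2}$ follows exactly as you say (the mixed terms cancel for either parity), and your computation of $\ol\caD^{\,2}=-\ol\Omega\otimes1$ together with the centrality of $\ol\Omega$ in $\ol\bH=\bC[W]\rtimes S(V^\vee)$ is accurate.

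The paper takes a different route: it invokes Lemma~\ref{l:oddder} to obtain $\ol d^{\,2}(ab)=\ol d^{\,2}(a)\,b+a\,\ol d^{\,2}(b)$ and then reduces to checking $\ol d^{\,2}(b)=0$ on the three types of generators $b=\omega\otimes1$, $1\otimes\omega$, $t_{s_\al}\otimes1$, with only the last case being nontrivial (and its details omitted). Your argument is in fact the graded counterpart of the paper's own proof of Lemma~\ref{l:d2} for the ungraded $d$; what makes it work so cleanly here is precisely your observation that the ``curvature'' piece $\rho(\Omega_{\wti W})$ of Theorem~\ref{t:dirac} disappears in the associated graded, leaving only the central Casimir. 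This bypasses the generator-by-generator verification entirely and makes the vanishing conceptually transparent, whereas the paper's reduction-to-generators approach keeps the argument parallel to the derivation structure set up in Lemma~\ref{l:oddder}.
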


\begin{proof}
Fix $a= t_w f \otimes v_1\cdots v_k\in  \ol \bH$ 
and let $b$ be arbitrary.
Using Lemma \ref{l:oddder}, one computes directly from the
definitions to find
\[
\ol d^2(ab) =\ol d^2(a)b + a\ol d^2(b).
\]
It follows that to establish the
current lemma in general, it suffices to check that $d^2(b) = 0$ for
each of the three kinds of generators $b$ appearing in the proof
of Lemma \ref{l:oddder}.  Once again this is a straight-forward verification
whose details we omit.  (Only case (iii) is nontrivial.)
\end{proof}

\begin{lemma}
\label{l:winc}
Let $\ol \rho$ denote the diagonal embedding of $\bC[\wti W]$
in $\ol \bH \otimes C(V^\vee)$ defined by linearly extending
\[
\ol \rho (\wti w) = t_{p(f_\alpha)} \otimes \wti w
\]
for $\wti w \in \wti W$.
Then
\[
\ol \rho(\bC[\wti W]) \subset \ker(\ol d).
\]
\end{lemma}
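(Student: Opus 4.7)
The plan is to imitate Lemma~\ref{l:inclusion} in the associated graded setting, exploiting the fact that the Dirac element simplifies considerably under the projection $\bH \to \ol\bH$. First I would observe from \eqref{e:dbar} that $\ol d$ can be rewritten as
\[
\ol d(a) = \bar\caD \cdot a - (-1)^k\, a \cdot \bar\caD,
\]
where $\bar\caD = \sum_i \omega_i \otimes \omega_i \in \ol\bH \otimes C(V^\vee)$ is the image of $\caD$ in the associated graded — the lower order corrections $T_{\omega_i}$ in \eqref{omtilde} drop out since they lie in $\bH^0$ — and $k$ records the Clifford parity of $a$.

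Next, I would pass the commutation identity of Lemma~\ref{l:winvdirac},
\[
\rho(\wti w)\,\caD = \sgn(\wti w)\,\caD\,\rho(\wti w),
\]
to the associated graded to obtain
\[
\ol\rho(\wti w)\,\bar\caD = \sgn(\wti w)\,\bar\caD\,\ol\rho(\wti w)
\]
in $\ol\bH \otimes C(V^\vee)$. Writing $\wti w = f_{\alpha_1}\cdots f_{\alpha_k}$ for a reduced expression of $p(\wti w)$ as in the proof of Lemma~\ref{l:inclusion}, the element $\ol\rho(\wti w)$ is a combination of simple tensors whose Clifford factors all have parity $k \pmod 2$, and $(-1)^k = \sgn(p(\wti w)) = \sgn(\wti w)$. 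Substituting the commutation relation into the formula for $\ol d$ then gives
\[
\ol d(\ol\rho(\wti w)) = \bigl(\sgn(\wti w) - (-1)^k\bigr)\,\ol\rho(\wti w)\,\bar\caD = 0,
\]
and linearity finishes the proof.

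I do not expect a substantive obstacle here; the argument is essentially the associated graded shadow of Lemma~\ref{l:inclusion}. The mild strengthening — that the conclusion holds on all of $\bC[\wti W]$ rather than only its $\wti W$-invariants — reflects the fact that $\ol d$ is defined on the entire space $\ol\bH \otimes C(V^\vee)$, so there is no longer any need to restrict to the subspaces $(\bH \otimes C(V^\vee))^{\triv}$ and $(\bH \otimes C(V^\vee))^{\sgn}$ as in the filtered setting.
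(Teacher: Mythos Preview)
Your argument is correct and takes a genuinely different route from the paper. The paper does not pass Lemma~\ref{l:winvdirac} to the associated graded; instead it invokes the odd derivation property (Lemma~\ref{l:oddder}) to reduce to the generators $\ol\rho(f_\al)$, and then checks $\ol d(t_{s_\al}\otimes\al^\vee)=0$ by a direct line-by-line computation with \eqref{e:dbar}. Your approach is cleaner: once you observe that $\ol d$ is the supercommutator with $\bar\caD=\sum_i\omega_i\otimes\omega_i$, the graded version of the $\wti W$-equivariance of $\caD$ (which is even easier than Lemma~\ref{l:winvdirac}, since $t_w\omega t_{w^{-1}}=w(\omega)$ holds on the nose in $\ol\bH$) and the parity bookkeeping of Lemma~\ref{l:inclusion} finish immediately. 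The paper's route, by contrast, is self-contained in the graded world and does not invoke the ungraded Lemma~\ref{l:winvdirac}, at the cost of the explicit Clifford calculation. One small quibble: your last paragraph slightly misidentifies the reason for the ``mild strengthening.'' The unrestricted map $d$ is already defined on all of $\bH\otimes C(V^\vee)$, and the proof of Lemma~\ref{l:inclusion} in fact already shows $\rho(\bC[\wti W])\subset\ker(d)$; the restriction to invariants in its statement is only because that is what is used downstream, not because of any obstruction in the filtered setting.
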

\begin{proof}
As noted in Section \ref{ss:wtilde}, the various $f_\al = \al^\vee/|\al^\vee|$ 
(for $\alpha$ simple) generate $\wti W$.   Furthermore $p(f_\al) = s_\al$.
So Lemma \ref{l:oddder} implies that the current lemma will follow if we
can prove
\[
\ol d(t_{s_\al} \otimes \al^\vee) = 0
\]
for each simple $\al$.
For this we compute directly,
\begin{align*}
\ol d(t_{s_\al} \otimes \al^\vee) 
&= \sum_i \omega_i t_{s_\al} \otimes \omega_i\alpha^\vee
+\sum_i t_{s_\al}\omega_i \otimes \alpha^\vee\omega_i \\
&= \sum_i t_{s_\al} s_\al(\omega_i)\otimes \omega_i\alpha^\vee
+\sum_i t_{s_\al}\omega_i \otimes \alpha^\vee\omega_i \\
&= \sum_i t_{s_\al} (\omega_i - (\al,\omega_i))\al^\vee
\otimes \omega_i\alpha^\vee
+\sum_i t_{s_\al}\omega_i \otimes \alpha^\vee\omega_i \\
&= \sum_i t_{s_\al} \omega_i \otimes (\omega_i\al^\vee +\al^\vee\omega_i)
- \sum_it_{s_\al}(\al,\omega_i)\al^\vee
\otimes \omega_i\alpha^\vee \\
&= -2\sum_i t_{s_\al} \omega_i \otimes \langle \al^\vee, \omega_i \rangle
- \sum_it_{s_\al}\al^\vee
\otimes (\al,\omega_i)\omega_i\alpha^\vee \\
&= -2\sum_i t_{s_\al} \langle \al^\vee, \omega_i \rangle \omega_i \otimes 1
- \sum_it_{s_\al}\al^\vee
\otimes \frac{\langle \omega_i,\al\rangle}{\langle\al^\vee, \al^\vee\rangle}
\omega_i\alpha^\vee \\
&=
-2t_{s_\al}\al^\vee \otimes 1 \quad - \quad t_{s_\al}\al^\vee \otimes 
\frac{(\al^\vee)^2}{\langle\al^\vee, \al^\vee\rangle}\\
&= -2t_{s_\al}\al^\vee \otimes 1 \quad + \quad 2t_{s_\al}\al^\vee \otimes 1 =0.
\end{align*}
\end{proof}

\medskip

Note 
from \eqref{e:dbar}, it follows that $\ol d$
preserves the subspace $S(V^\vee) \otimes C(V^\vee)
 \subset \ol \bH\otimes
C(V^\vee)$.  Write $\ol d'$ for the restriction of $\ol d$
to $S(V^\vee) \otimes C(V^\vee)$.

\begin{lemma}
\label{l:untwisted}
With notation as in the previous paragraph,
\[
\ker(\ol d') = \Im(\ol d') \oplus \bC(1\otimes 1).
\]
\end{lemma}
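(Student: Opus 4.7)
The plan is to identify the differential $\bar d'$ on $S(V^\vee)\otimes C(V^\vee)$ with a scalar multiple of the classical Koszul differential on $S(V^\vee)\otimes \bigwedge V^\vee$, and then invoke the fact that the Koszul complex is a free resolution of $\bC$ over $S(V^\vee)$.

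Concretely, fix a self-dual basis $\{\omega_i\}$ of $V^\vee$. Since $S(V^\vee)$ is commutative, the restriction of \eqref{e:dbar} to $S(V^\vee)\otimes C(V^\vee)$ simplifies to
\[
\bar d'(f\otimes\sigma)=\sum_i f\omega_i\otimes\partial_{\omega_i}(\sigma),
\]
where $\partial_\omega(\sigma):=\omega\sigma-(-1)^{|\sigma|}\sigma\omega$ is the graded supercommutator with $\omega$ on $\sigma\in C(V^\vee)$ homogeneous for the $\bZ/2\bZ$-grading. Using the Clifford relation $\omega v + v\omega = -2\langle\omega,v\rangle$, one verifies that $\partial_\omega$ is a graded derivation of $C(V^\vee)$ whose value on a generator $v\in V^\vee$ is $-2\langle\omega,v\rangle$. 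Under the linear (symbol) isomorphism $C(V^\vee)\xrightarrow{\sim}\bigwedge V^\vee$ sending an ordered Clifford monomial $\omega_{i_1}\cdots\omega_{i_k}$ (with $i_1<\cdots<i_k$) to $\omega_{i_1}\wedge\cdots\wedge\omega_{i_k}$, the derivation $\partial_{\omega_i}$ therefore corresponds exactly to $-2\iota_{\omega_i}$, the standard contraction. Consequently, $\bar d'$ is identified with $-2\delta$, where $\delta=\sum_i L_{\omega_i}\otimes \iota_{\omega_i}$ is the classical Koszul differential on $S(V^\vee)\otimes\bigwedge V^\vee$.

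The Koszul complex is a resolution of $\bC$ over $S(V^\vee)$, so its cohomology is one-dimensional, spanned by the class of $1\otimes 1$ in $S(V^\vee)\otimes\bigwedge^0 V^\vee$; this gives $\ker(\bar d')/\Im(\bar d')\cong \bC(1\otimes 1)$. Moreover $\bar d'$ strictly raises $S(V^\vee)$-degree, so $\Im(\bar d')\subset (V^\vee)\cdot S(V^\vee)\otimes C(V^\vee)$, which does not contain $1\otimes 1$, yielding the direct-sum decomposition $\ker(\bar d')=\Im(\bar d')\oplus\bC(1\otimes 1)$. The only delicate point in this plan is verifying that $\partial_\omega$ intertwines with $-2\iota_\omega$ under the symbol map; this is a straightforward induction on Clifford degree using the derivation property on both sides together with the Clifford relation. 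Alternatively, one could bypass the identification altogether and produce a contracting homotopy on $S(V^\vee)\otimes C(V^\vee)$ modeled on the Euler-type homotopy of the Koszul complex, but the identification route seems cleaner.
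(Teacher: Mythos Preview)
Your proposal is correct and takes essentially the same approach as the paper, which simply asserts that an elementary calculation identifies $\ol d'$ with a multiple of the Koszul differential and refers to \cite[Lemma 4.1]{HP} for the well-known cohomology. You have supplied the details of that identification (including the useful remark that $\ol d'$ raises $S(V^\vee)$-degree, which justifies the direct-sum splitting rather than merely the cohomology statement), so there is nothing to correct.
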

\begin{proof}
An elementary calculation shows that $\ol d'$ is a multiple of the differential in the Koszul complex whose cohomology is well-known. (See \cite [Lemma 4.1]{HP}, for instance.)
\end{proof}

We can now assemble these lemmas into the computation
of the cohomology of $\ol d$.

\begin{proposition}
\label{p:dbar}
We have
\[
\ker(\ol d) = \Im(\ol d) \oplus \ol\rho(\bC[\wti W]).
\]
\end{proposition}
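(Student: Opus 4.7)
The plan is to exploit the semi-direct product structure $\ol \bH = \bC[W] \ltimes S(V^\vee)$ to reduce the cohomology computation of $\ol d$ to the Koszul-type calculation already carried out in Lemma \ref{l:untwisted}. First I would choose, for each $w \in W$, a lift $\wti w \in \wti W$. Since $\ol\rho(\wti w) = t_w \otimes \wti w$ and multiplication on the right by the invertible element $1 \otimes \wti w^{-1} \in \bH \otimes C(V^\vee)$ is a vector space isomorphism, I obtain the decomposition
\[
\ol \bH \otimes C(V^\vee) \; = \; \bigoplus_{w \in W} \ol\rho(\wti w) \cdot \bigl(S(V^\vee) \otimes C(V^\vee)\bigr),
\]
the $w$-th summand being exactly $t_w \cdot S(V^\vee) \otimes C(V^\vee)$.

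Next I would combine Lemmas \ref{l:oddder} and \ref{l:winc} to show that $\ol d$ preserves this decomposition and restricts on each summand to $\pm \ol d'$. Writing $\wti w = f_{\al_1} \cdots f_{\al_{\ell(w)}}$ for a reduced expression $w = s_{\al_1} \cdots s_{\al_{\ell(w)}}$ realizes $\ol\rho(\wti w) = t_w \otimes f_{\al_1}\cdots f_{\al_{\ell(w)}}$ as a simple tensor of Clifford degree $\ell(w)$. Lemma \ref{l:winc} gives $\ol d(\ol\rho(\wti w)) = 0$, and then the odd-derivation property of Lemma \ref{l:oddder} yields
\[
\ol d\bigl(\ol\rho(\wti w) \cdot b\bigr) \; = \; (-1)^{\ell(w)} \, \ol\rho(\wti w) \cdot \ol d'(b)
\qquad \text{for all } b \in S(V^\vee) \otimes C(V^\vee),
\]
using that $\ol d$ visibly preserves $S(V^\vee) \otimes C(V^\vee)$ (from formula \eqref{e:dbar} with $w = 1$).

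Finally I would apply Lemma \ref{l:untwisted}, which asserts $\ker(\ol d') = \Im(\ol d') \oplus \bC(1 \otimes 1)$, summand by summand to conclude
\[
\ker(\ol d) \; = \; \bigoplus_{w \in W} \ol\rho(\wti w) \cdot \ker(\ol d') \; = \; \Im(\ol d) \; \oplus \; \bigoplus_{w \in W} \bC \cdot \ol\rho(\wti w).
\]
The second summand coincides with $\ol\rho(\bC[\wti W])$: since $\ol\rho(-\wti w) = -\ol\rho(\wti w)$, the image of $\ol\rho$ is $|W|$-dimensional and is spanned by $\{\ol\rho(\wti w) : w \in W\}$, independent of the choice of lifts. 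No serious obstacle appears beyond careful bookkeeping with signs and lifts; the essential geometric content has been absorbed into Lemma \ref{l:untwisted} (the Koszul computation) and Lemma \ref{l:winc} (the fact that $\ol\rho(\wti W)$ sits inside the kernel), so the present argument is mostly formal.
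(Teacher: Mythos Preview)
Your proposal is correct and follows essentially the same approach as the paper: both decompose $\ol\bH\otimes C(V^\vee)$ according to the $t_w$-component, use Lemmas \ref{l:oddder} and \ref{l:winc} to reduce the computation on each piece to $\ol d'$ on $S(V^\vee)\otimes C(V^\vee)$, and then invoke the Koszul calculation of Lemma \ref{l:untwisted}. The only difference is cosmetic: you set up the direct-sum decomposition $\bigoplus_w \ol\rho(\wti w)\cdot(S(V^\vee)\otimes C(V^\vee))$ in advance and observe $\ol d$ acts as $(-1)^{\ell(w)}\ol\rho(\wti w)\cdot\ol d'$ on each summand, whereas the paper argues element by element, first checking that the $t_{w_i}$-components of an element of $\ker(\ol d)$ are individually killed and then translating each back to $S(V^\vee)\otimes C(V^\vee)$ by left-multiplying by $\ol\rho(\wti w_i^{-1})$.
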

\begin{proof}
By Lemmas \ref{l:dbar2} and \ref{l:winc},
$\Im(\ol d) +  \ol\rho(\bC[\wti W]) \subset \ker(\ol d)$.
Since it follows from the definition of 
$\ol d$ that $\Im(\ol d)$ and $\ol \rho(\bC[\wti W])$ intersect trivially,
we need only establish the reverse inclusion.  
Fix $a \in \ker(\ol d)$ and write it as a sum of simple tensors
of the form 
$t_{w} f \otimes v_1\cdots v_k$.  For each $w_j \in W$, 
let $a_j$ denote the sum of the simple tensors appearing
in this expression for $a$ which have $t_{w_j}$ in them.
Thus $a= a_1 + \cdots a_l$,
and we can arrange the indexing so that each $a_i$ is nonzero.
Since $\ol d(a) = 0$, 
\begin{equation}
\label{e:a}
\ol d(a_1) + \cdots +\ol d(a_l) = 0.  
\end{equation}
Each term $\ol d(a_i)$ is a sum of simple tensors of the form
$t_{w_i} f \otimes v_1\cdots v_k$.
Since the $w_i$ are distinct, the only way \eqref{e:a}
can hold is if each $\ol d(a_i) = 0$.
Choose $\wti w_i \in \wti W$ such that $p(\wti w_i) = w_i$.  Set
\[
a_i' = \ol \rho(\wti w_1^{-1}) a_i \in S(V) \otimes C(V^\vee).
\]
Using Lemmas \ref{l:oddder}
and \ref{l:winc}, we  have
\[
\ol \rho (\wti w_i) \ol d(a_i')= \ol d(\ol \rho(\wti w_i) a_i) = \ol d(a_i) = 0.
\]
Thus for each $i$,
\[
\ol d(a'_i) = 0.
\]
Since each $a_i' \in S(V^\vee) \otimes C(V^\vee)$, Lemma \ref{l:untwisted}
implies $a'_i  = \ol d (b'_i) \oplus c_i(1\otimes 1)$ with $b_i' \in S(V^\vee) \otimes C(V^\vee)$
and $c_i \in \bC$.  Using Lemmas \ref{l:oddder}
and \ref{l:winc} once again, 
we have
\begin{align*}
a_i &= \ol\rho(\wti w_i)a'_i \\
&= \ol \rho(\wti w_i)\left ( \ol d'(b'_i) + c_i(1 \otimes 1)\right )\\
&=\ol d (\rho(\wti w_i)b'_i) + c_i\ol \rho(\wti w_i) \\
&\in \Im(\ol d) + \rho(\bC[\wti W]).
\end{align*}
Hence $a = a_1 + \cdots + a_l \in \Im(\ol d) + \rho(\bC[\wti W])$ and the proof is complete.
\end{proof}

The considerations around \eqref{e:winv} also apply in the graded
setting.
In particular, using an argument as in the proof of Lemma \ref{l:winvdirac},
we conclude
$\ol d$ interchanges
the spaces
\begin{equation}
\label{e:winvgr}
(\ol \bH \otimes C(V^\vee))^{\triv} = \{a \in \ol \bH \otimes C(V^\vee) 
\; | \; \rho(\wti w) a = a \rho(\wti w) \}
\end{equation}
and
\begin{equation}
\label{e:winvsgngr}
(\ol \bH \otimes C(V^\vee))^{\sgn} = \{a \in \ol \bH \otimes C(V^\vee) 
\; | \; \rho(\wti w) a = \sgn(\wti w) a \rho(\wti w) \}.
\end{equation}
As before, let $\ol d^{\triv}$ (resp.~$\ol d^{\sgn}$) 
denote the restriction of $d$ to the space
in \eqref{e:winv} (resp.~\eqref{e:winvsgn}).
Passing to the subspace
\[
(\ol \bH \otimes C(V^\vee))^{\triv} \oplus
\ol \bH \otimes C(V^\vee))^{\sgn}
\]
in Proposition \ref{p:dbar} we obtain the following corollary.

\begin{corollary}
\label{c:dbar}
With notation as in the previous paragraph,
\[
\ker(\ol d^{\triv}) = \Im(\ol d^{\sgn}) \oplus 
\ol \rho(\bC[\wti W]^{\wti W}).
\]
\end{corollary}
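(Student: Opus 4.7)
The plan is to deduce Corollary \ref{c:dbar} from Proposition \ref{p:dbar} by taking $\wti W$-invariants under the conjugation action of $\ol\rho(\bC[\wti W])$. First I would verify that $\ol d$ satisfies the graded analogue of Lemma \ref{l:winvdirac}: namely,
\[
\ol\rho(\wti w)\,\ol d(a)\,\ol\rho(\wti w)^{-1} = \sgn(\wti w)\,\ol d\!\left(\ol\rho(\wti w)\,a\,\ol\rho(\wti w)^{-1}\right),
\]
for every $\wti w\in\wti W$ and $a\in\ol\bH\otimes C(V^\vee)$. This is exactly the relation that was used to define $\ol d^{\triv}$ and $\ol d^{\sgn}$ in \eqref{e:winvgr}--\eqref{e:winvsgngr}, and one checks it in the same way Lemma \ref{l:winvdirac} was checked (indeed the formula \eqref{e:lem} descends to $\ol\bH$ since the filtration is $W$-stable). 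Consequently, under the $\wti W$-isotypic decomposition of $\ol\bH\otimes C(V^\vee)$, the map $\ol d$ carries the $\tau$-isotypic component into the $(\tau\otimes\sgn)$-isotypic component. In particular $\ol d$ interchanges the trivial and sign components.

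Next I would intersect the decomposition of Proposition \ref{p:dbar},
\[
\ker(\ol d) = \Im(\ol d) \oplus \ol\rho(\bC[\wti W]),
\]
with $(\ol\bH\otimes C(V^\vee))^{\triv}$. The left side obviously becomes $\ker(\ol d^{\triv})$. To identify the first summand on the right, one uses isotypic projection: if $a = \ol d(b)$ lies in the trivial component, decompose $b = \sum_\tau b_\tau$ into its isotypic pieces; then $\ol d(b_\tau)$ lies in the $(\tau\otimes\sgn)$-component, so only the piece $b_{\sgn}$ contributes, giving $\Im(\ol d)\cap(\ol\bH\otimes C(V^\vee))^{\triv}=\Im(\ol d^{\sgn})$. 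To identify the second summand, observe that because $\ol\rho$ is an algebra homomorphism,
\[
\ol\rho(\wti w)\,\ol\rho(\wti v)\,\ol\rho(\wti w)^{-1} = \ol\rho(\wti w\,\wti v\,\wti w^{-1}),
\]
so the conjugation action of $\wti W$ on $\ol\rho(\bC[\wti W])$ is the image under $\ol\rho$ of the conjugation action of $\wti W$ on $\bC[\wti W]$; hence
\[
\ol\rho(\bC[\wti W])\cap(\ol\bH\otimes C(V^\vee))^{\triv}=\ol\rho(\bC[\wti W]^{\wti W}).
\]

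Combining these three identifications gives the desired equality
\[
\ker(\ol d^{\triv}) = \Im(\ol d^{\sgn}) \oplus \ol\rho(\bC[\wti W]^{\wti W}),
\]
with the sum still direct since it is direct already in the larger space. The one mildly delicate point (and the closest thing to an obstacle) is the isotypic-projection argument showing $\Im(\ol d)\cap(\ol\bH\otimes C(V^\vee))^{\triv}=\Im(\ol d^{\sgn})$; this rests entirely on the $\sgn$-twisted equivariance of $\ol d$, so the whole proof hinges on verifying that equivariance cleanly in the graded setting. Everything else is bookkeeping.
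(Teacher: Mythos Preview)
Your proposal is correct and follows exactly the approach the paper takes: the paper's proof is the one-line remark ``Passing to the subspace $(\ol\bH\otimes C(V^\vee))^{\triv}\oplus(\ol\bH\otimes C(V^\vee))^{\sgn}$ in Proposition~\ref{p:dbar} we obtain the following corollary,'' and you have simply spelled out the details of that passage (the $\sgn$-twisted equivariance, the isotypic projection identifying $\Im(\ol d)\cap(\text{triv})$ with $\Im(\ol d^{\sgn})$, and the identification of the $\wti W$-invariants in $\ol\rho(\bC[\wti W])$). The only minor remark is that $\ol\rho$ is not injective on $\bC[\wti W]$ (its kernel is the nongenuine part), so your identification $\ol\rho(\bC[\wti W])\cap(\text{triv})=\ol\rho(\bC[\wti W]^{\wti W})$ is most cleanly justified by averaging over $\wti W$ rather than by appealing to injectivity---but your phrasing already accommodates this.
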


\medskip

\noindent Theorem \ref{t:hp2}, and hence
Theorem \ref{t:hp}, now follow from Corollary \ref{c:dbar} by an easy induction
based on the degree of the filtration.\qed

\begin{remark} 
\label{r:geom}

As remarked above, our proof shows that Theorem \ref{t:hp} 
holds for graded affine Hecke algebras attached 
to arbitrary
root systems and arbitrary
parameters.
The proof of Theorem \ref{t:vogan} depends on two
other key ingredients:
Theorem \ref{t:class} and the classification (and $W$-structure)
of tempered modules.
Both results are available for the algebras considered
by Lusztig in \cite{lu:1}, the former by 
\cite[Theorem
  3.10.1]{ciubo:weyl} and the latter by \cite{lu:3}.
Thus our proof establishes Theorem \ref{t:vogan} for cases of the unequal
parameters as in \cite{lu:1}.
\end{remark}

\ifx\undefined\bysame
\newcommand{\bysame}{\leavevmode\hbox to3em{\hrulefill}\,}
\fi

\end{document}